\newcolumntype{L}{>{$}l<{$}} 
\newcommand{\llangle}{\langle\hspace{-2.5pt}\langle}
\newcommand{\Oname}{\operatorname{O}}
\newcommand{\Id}{\operatorname{Id}}
\newcommand{\SO}{\operatorname{SO}}
\newcommand{\E}{\operatorname{E}}
\renewcommand{\O}{\operatorname{O}}
\newcommand{\ev}{\operatorname{ev}}
\newcommand{\RR}{\mathbb{R}}
\newcommand{\rrangle}{\rangle\hspace{-2.5pt}\rangle}
\newcommand{\longto}[1][\ ]{\xrightarrow{\quad #1 \quad}}
\newcommand{\RotImplies}{\rotatebox[origin=c]{-90}{$\implies$}}
\newcommand{\subsmall}{\fontsize{10.25pt}{11pt}\selectfont}
\newlength{\claimstepvspace} 
\newlength{\theoremstepvspace} 
\newtheoremstyle{mystyle}
  {}
  {}
  {}
  {}
  {\bfseries}
  {.}
  { }
  {\thmname{#1}\thmnumber{ #2}\thmnote{ (#3)}}
\newtheorem{theorem}{Theorem}
\newtheorem{corollary}[theorem]{Corollary}
\newtheorem{lemma}[theorem]{Lemma}
\newtheorem{proposition}[theorem]{Proposition}
\theoremstyle{definition}
\newtheorem{example}[theorem]{Example}
\newtheorem{definition}[theorem]{Definition}
\newtheorem{remark}[theorem]{Remark}
\newcounter{claimcounter}
\theoremstyle{mystyle}
\crefname{claim}{Claim}{Claims}
\crefname{figure}{Figure}{Figures}
\crefname{table}{Table}{Tables}
\crefname{lemma}{Lemma}{Lemmas}
\crefname{example}{Example}{Examples}
\crefname{definition}{Definition}{Definitions}
\crefname{remark}{Remark}{Remarks}
\crefname{theorem}{Theorem}{Theorems}
\crefname{corollary}{Corollary}{Corollaries}
\crefname{proposition}{Proposition}{Propositions}
\crefname{section}{Section}{Sections}
\crefname{appendix}{Appendix}{Appendices}
\crefname{problem}{Problem}{Problems}
\crefname{claimcounter}{Claim}{Claims}
\newlist{propenum}{enumerate}{1} 
\setlist[propenum]{label=(\alph*), ref=\theproposition(\alph*)}
\newlist{claimenum}{enumerate}{1} 
\setlist[claimenum]{label=(\alph*), ref=\theclaim(\alph*)}
\newlist{lemenum}{enumerate}{1} 
\setlist[lemenum]{label=(\alph*), ref=\thelemma(\alph*)}
\newlist{thmenum}{enumerate}{1} 
\setlist[thmenum]{label=(\alph*), ref=\thetheorem(\alph*)}
\newlist{corenum}{enumerate}{1} 
\setlist[corenum]{label=(\alph*), ref=\thecorollary(\alph*)}
\title{Estimating the Euclidean distortion of an orbit space}
\author{
Ben~Blum-Smith\footnote{Department of Applied Mathematics and Statistics, Johns Hopkins University, Baltimore, MD}
\quad
Harm~Derksen\footnote{Department of Mathematics, Northeastern University, Boston, MA}
\quad
Dustin~G.~Mixon\footnote{Department of Mathematics, The Ohio State University, Columbus, OH} \footnote{Translational Data Analytics Institute, The Ohio State University, Columbus, OH}\\
\quad
Yousef~Qaddura\footnotemark[3]
\quad
Brantley~Vose\footnotemark[3]
}
\date{}
\newcommand{\bigperp}{%
  \mathop{\mathpalette\bigp@rp\relax}%
  \displaylimits
}
\newcommand{\bigp@rp}[2]{%
  \vcenter{
    \m@th\hbox{\scalebox{\ifx#1\displaystyle2.1\else1.5\fi}{$#1\perp$}}
  }%
}
\newcommand{\Plu}{\operatorname{Plu}}
\newcommand{\Tr}{\operatorname{Tr}}
\newcommand{\U}{\operatorname{U}}
\renewcommand{\Re}{\operatorname{Re}}
\DeclareMathOperator*{\argmin}{arg\,min}
\begin{document}
\maketitle
\begin{abstract}
Given a finite-dimensional inner product space $V$ and a group $G$ of isometries, we consider the problem of embedding the orbit space $V/G$ into a Hilbert space in a way that preserves the quotient metric as well as possible.
This inquiry is motivated by applications to invariant machine learning.
We introduce several new theoretical tools before using them to tackle various fundamental instances of this problem.
\end{abstract}

\tableofcontents

\section{Introduction}

In this paper, we construct low-distortion bilipschitz embeddings (into Euclidean spaces) of quotients of Euclidean spaces by various important group actions. 
In doing so, we contribute to a young but rapidly growing body of work aimed at understanding how, and to what extent, orbit spaces can be embedded into a Hilbert space with minimal metric distortion \cite{AgarwalRT:20, AmirBDE:25, BalanD:22, BalanHS:22, BalanT:23, CahillIM:24, CahillIMP:22, Eriksson:18, HavivR:10, HeimendahlLVZ:22, MixonP:22, MixonQ:22, Qaddura:25, VallentinM:23, Zolotov:19}; this research program may be termed {\em bilipschitz invariant theory}. 
In what follows, we present some motivation (\cref{sec.motivation}), we review the relevant literature (\cref{sec.related-work}), and we summarize our results (\cref{sec.results}).

\subsection{Motivation}\label{sec.motivation}

Many machine learning algorithms are designed to operate on data that sits in a Euclidean space.
Other types of data, such as text, shapes, or graphs, must undergo vectorization before one can bring the standard tools to bear. Even methods that theoretically apply to data in a general metric space can benefit from vectorization.
For instance, fast randomized nearest neighbor algorithms, such as the one in \cite{JonesOR:11}, can efficiently approximate nearest neighbors in large Euclidean datasets by avoiding explicit computation of all pairwise distances.

In many cases, the naive vector representation of an object is not unique.
For example, one might represent a point cloud of $n$ points in $\mathbb{R}^d$ as a member $x$ of $V:=\mathbb{R}^{d\times n}$, but the same point cloud can also be represented by any permutation of the columns of $x$.
Observe that this ambiguity arises from a group $G$ of isometries of $V$, namely, the group of column permutations.

To address such ambiguities, one might factor them out by identifying objects with members $[x]:=G\cdot x$ of the orbit space $V/G$.
In general, whenever a group $G$ acts isometrically on a metric space $X$, the orbit space $X/G$ inherits a pseudometric defined by
\begin{equation*}
\label{eq.distance def}
d([x],[y])
:=\inf_{\substack{p\in[x]\\q\in[y]}}\|x-y\|,
\end{equation*}
which is an honest metric in the usual case where the orbits of $G$ are closed.
In order to make use of the vast array of Euclidean-based machine learning algorithms, we are inclined to embed the orbit space into Euclidean space while simultaneously minimizing the resulting distortion of the quotient metric.
We elect to quantify distortion in the following manner.

\begin{definition}\label{def.distortion}
Given two metric spaces $X$ and $Y$ and a map $f\colon X\to Y$, take $\alpha,\beta\in[0,\infty]$ to be the largest and smallest constants (respectively) such that
\[
\alpha \cdot d_X(x,x') 
\leq d_Y\big(f(x),f(x')\big)
\leq \beta\cdot d_X(x,x') 
\qquad 
\forall x,y\in X.
\]
Then $\alpha$ and $\beta$ are called the \textbf{(optimal) lower and upper Lipschitz bounds} of $f$, respectively. 
The \textbf{distortion} of $f$ is given by $\kappa(f):= \frac{\beta}{\alpha}$, where we take $\kappa(f)=\infty$ if $\alpha=0$ or $\beta = \infty$.
A map with finite distortion is called \textbf{bilipschitz}.
Moreover, the \textbf{Euclidean distortion} of $X$, denoted $c_2(X)\in [1,\infty]$, is the infimum of $c$ for which there exists a Hilbert space $H$ and $f\colon X\to H$ such that $\kappa(f) = c$.
\end{definition}

We can think of the distortion of a map as a generalization of the \textit{condition number} of a matrix. 
Indeed, the distortion $\kappa(f)$ of a linear map $f$ is exactly its condition number. 
Hence, a metric space $X$ has low Euclidean distortion if it can be embedded in Euclidean space in a ``well-conditioned" way.

While there are many ways to quantify the failure of a map to preserve geometry, Section~2 of~\cite{CahillIM:24} shows how minimizing distortion is particularly useful in the context of transferring Euclidean data science algorithms (e.g., nearest-neighbor, clustering, and multidimensional scaling algorithms) to arbitrary metric spaces. 
To illustrate, we recall an example in the context of nearest-neighbor search.

\begin{example}[Example~1 in~\cite{CahillIM:24}] 
\label{ex.bilip nearest}
Given a metric space $(X,d_X)$, data $x_1,\ldots,x_m\in X$, and an approximation parameter $\lambda\geq 1$, the corresponding \textit{$\lambda$-approximate nearest neighbor problem} takes as input $x\in X$ and outputs $j\in\{1,\ldots,m\}$ such that
\[
d_X(x,x_j)
\leq \lambda\cdot \min_{1\leq i\leq m} d_X(x,x_i).
\]
Suppose it is relatively easy to solve this problem in another metric space $Y$ (e.g., when $Y$ is a Euclidean space~\cite{JonesOR:11}).
Given a map $f\colon X\to Y$ with lower and upper Lipschitz bounds $\alpha,\beta\in (0,\infty)$, one may pull back any solver in $Y$ through $f$, thereby allowing one to solve the $\lambda\kappa(f)$-approximate nearest-neighbor problem in $X$.
To see this, first use the solver in $Y$ to find $j\in \{1,\dots,m\}$ such that
\[
d_Y\big(f(x),f(x_j)\big)
\leq \lambda\cdot \min_{1\leq i\leq m} d_Y\big(f(x),f(x_i)\big).
\]
Then $x_j$ is also an approximate nearest neighbor of $x$ in $X$:
\[
d_X(x,x_j)
\leq\frac{1}{\alpha}\cdot d_Y\big(f(x),f(x_j)\big)
\leq\frac{\lambda}{\alpha}\cdot\min_{i\in I}d_Y\big(f(x),f(x_i)\big)
\leq \lambda\cdot\frac{\beta}{\alpha}\cdot\min_{i\in I} d_X(x,x_i).
\]
\end{example}        

In short, data science insists on Euclidean representations of objects.
As such, when objects are naturally represented by orbits, we seek a low-distortion embedding of the orbit space into Euclidean space.


\subsection{Related work}\label{sec.related-work}

\subsubsection{Exact distortions of quotients by groups of isometries}

\begin{table}[t]
\begin{center}
\begin{tabular}{|p{5cm}p{3cm}p{3cm}p{3.5cm}|}
        \hline
        \parbox[c][0.8cm][l]{3cm}{}
        $G$ & $V$ & $c_2(V/G)$ & reference\\
        \hline\hline
        \parbox[c][0.8cm][l]{3cm}{}
        $\O(1)$ & $\mathbb R^{n}$, ~$n\geq 2$ & $\sqrt{2}$ & Cor.~36 in~\cite{CahillIM:24}\\
        \hline
        \parbox[c][0.8cm][l]{3cm}{}
        $\U(1)$ & $\mathbb C^{n}$, ~$n\geq 2$ & $\sqrt{2}$ & Cor.~37 in~\cite{CahillIM:24}\\
        \hline
        \parbox[c][0.8cm][l]{3cm}{}
        $\langle e^{2\pi i/r}\rangle$ & $\mathbb C$ & $r\sin(\tfrac{\pi}{2r})$ & Cor.~38 in~\cite{CahillIM:24}\\
        \hline
        \parbox[c][0.8cm][l]{3cm}{}
        reflection group & $\mathbb R^n$ &  $1$ & Lem.~8 in~\cite{MixonP:22}\\
        \hline
        \parbox[c][0.8cm][l]{3cm}{}
        connected polar group & $\mathbb R^n$ & $1$ & Prop.~6 in~\cite{Dadok:85}\\
        \hline
        \parbox[c][0.8cm][l]{3cm}{}
        rectangular lattice & $\mathbb R^n$ & $\tfrac{\pi}{2}$ & Thm.~6.1 in~\cite{HeimendahlLVZ:22}\\
        \hline
        \parbox[c][0.8cm][l]{3cm}{}
        $A_2$ lattice & $\mathbb R^2$ & $\tfrac{\sqrt{8}\pi}{\sqrt{27}}$ & Thm.~4.1(2) in~\cite{VallentinM:23}\\
        \hline
        \parbox[c][0.8cm][l]{3cm}{}
        $E_8$ lattice & $\mathbb R^8$ & $\tfrac{\sqrt{15}\pi}{\sqrt{32}}$ & Thm.~4.1(3) in~\cite{VallentinM:23}\\
        \hline
    \end{tabular}
    \caption{Previously known Euclidean distortions for $V/G$.\label{table1}}
    \end{center}
    \end{table}

Given a finite-dimensional real Hilbert space $V$ and $G\leq\E(V)$ with closed orbits, what is the Euclidean distortion of the orbit space $V/G$?
We note that $c_2(V/G)< \infty$. To see this, assume without loss of generality that $G$ is closed, since $G$ and its closure $\overline G$ have the same (closed) orbits; see, for instance,~\cite[Lem.~2.1]{Kramer:22}. In particular, $V/G$ is a finite-dimensional \emph{Alexandrov space of nonnegative curvature}; see~\cite[Prop.~3.62, Rmk.~3.80, fn.~17]{AlexandrinoB:15} and the discussion following Theorem~1.6 in~\cite{GroveK:02}. For such a space $X$, Zolotov's work \cite{Zolotov:19} built on \cite{Eriksson:18} to establish $c_2(X) < \infty$. In the case where $G\leq \Oname(V)$, we provide in \cref{app.finite distortion} a streamlined proof of $c_2(V/G)< \infty$ which avoids the metric geometry machinery used in \cite{Zolotov:19} and instead highlights the power of the \textit{slice theorem} for isometric group actions.

\cref{table1} presents all previously known Euclidean distortions for spaces of the form $V/G$.\footnote{The only exceptions we know of are the further consequences that can be derived from the identity $c_2(V/G\times W/H) = \max\{c_2(V/G),c_2(W/H)\}$ established in~\cite{CahillIM:24} (see \cref{prop.max-product}).}
In all cases, the upper bound on $c_2(V/G)$ was obtained by an explicit embedding into a finite-dimensional Hilbert space.
For the first three rows of \cref{table1}, the underlying $G$-invariant maps exhibit a similar form:
\[
\begin{aligned}
\mathbb{R}^n&\to(\mathbb{R}^n)^{\otimes 2}\\[3pt]
\displaystyle x&\mapsto \frac{x\otimes x}{\|x\|}
\end{aligned}
\qquad\qquad
\begin{aligned}
\mathbb{C}^n&\to(\mathbb{C}^n)^{\otimes 2}\\[3pt]
\displaystyle z&\mapsto \frac{z\otimes \overline{z}}{\|z\|}
\end{aligned}
\qquad\qquad
\begin{aligned}
\mathbb{C}&\to\mathbb{R}\times\mathbb{C}\\[3pt]
\displaystyle z&\mapsto \bigg(\cos(\tfrac{\pi}{2r})\cdot|z|,~\sin(\tfrac{\pi}{2r})\cdot\frac{z^r}{|z|^{r-1}}\bigg)
\end{aligned}
\]
and $0\mapsto 0$ in each case.
Furthermore, the matching (nontrivial) lower bound on $c_2(V/G)$ was obtained by semidefinite programming (see Section~7 in~\cite{CahillIM:24}).

In the case of a reflection group, the optimal embedding sends each orbit to the unique representative in a fixed fundamental domain; a proof for finite reflection groups appears in~\cite{MixonP:22}, but it can be easily adapted to treat affine reflection groups as well.
A Lie group $G\leq \O(V)$ is said to be \textit{polar} if there exists a subspace $\Sigma\leq V$, called a \textit{section}, that orthogonally intersects every orbit of $G$.
Let $H\leq G$ denote the largest subgroup under which $\Sigma$ is invariant.
It turns out that if $G$ is connected, then $V/G$ is isometrically isomorphic to $\Sigma/H$, and furthermore, $H$ acts as a reflection group on $\Sigma$.
It follows that $c_2(V/G)=1$.
As an example, the conjugation action of $G=\SO(n)$ on the vector space $V$ of real symmetric $n\times n$ matrices is a connected polar action with section $\Sigma$ consisting of the diagonal matrices, on which $H=S_n$ acts as a reflection group.

We discuss the the last three rows of \cref{table1} in \cref{sec.sub flat tori}. 



\subsubsection{Phase retrieval}

Given $F\in\{\mathbb{R},\mathbb{C}\}$, let $V$ denote a finite-dimensional Hilbert space over $F$, and let $G$ denote the group of scalars in $F$ of unit modulus.
The \textit{phase retrieval problem} is to reconstruct any orbit $[x]\in V/G$ from a collection of $G$-invariant measurements of the form
\[
\Phi([x])=\big(|\langle x,a_i\rangle|\big)_{i=1}^n,
\]
where $a_1,\ldots,a_n\in V$ are known measurement vectors.
A recent line of work \cite{Alharbi:22,BalanW:15,BandeiraCMN:14,CahillCD:16} established that the map $\Phi\colon V/G\to \mathbb{R}^n$ is bilipschitz precisely when it is injective, and furthermore, \cite{BalanCE:06,ConcaEHV:15} established that $\Phi$ is injective for a generic choice of $a_1,\ldots,a_n$ whenever $n\geq 2\operatorname{dim}_{\mathbb{R}}(V/G)-\operatorname{dim}_{\mathbb{R}}(F)$.
More recent work has quantified the distortion of $\Phi$.
In particular, \cite{XiaXX:24} showed that
\[
\kappa(\Phi)
\geq\left\{\begin{array}{cl}
\displaystyle\sqrt{\frac{4}{4-\pi}}&\text{if } F = \mathbb R,\\[16pt]
\displaystyle\sqrt{\frac{\pi}{\pi-2}}&\text{if } F = \mathbb C.
\end{array}\right.
\]
Furthermore, if $a_1,\ldots,a_n$ are independent Gaussian vectors, then with high probability as $n\to\infty$, the distortion $\kappa(\Phi)$ concentrates towards this lower bound.
Meanwhile, this bound is strictly larger than the Euclidean distortion $c_2(V/G)=\sqrt{2}$ given in~\cref{table1}.

\subsubsection{Max filtering}

Max filtering was introduced in~\cite{CahillIMP:22} as a generalization of phase retrieval to arbitrary compact subgroups $G\leq \O(d)$. 
The \textit{max filtering map} $\llangle\cdot,\cdot\rrangle\colon \mathbb R^d/G\times \mathbb R^d/G\to \mathbb R$ is defined by
\[
\llangle [x],[y]\rrangle 
:= \sup_{\substack{p\in [x]\\q\in [y]}} \langle p,q\rangle.
\]
Given \textit{templates} $a_1,\ldots,a_n\in\mathbb R^d$, the \textit{max filter bank} $\Phi\colon \mathbb R^d/G \to \mathbb R^n$ is defined by
\[
\Phi([x]) 
:= \big(\llangle [x],[a_i]\rrangle\big)_{i=1}^n.
\]
For finite $G$, it was shown in~\cite{BalanT:23} that every injective max filter bank is bilipschitz. 
Moreover, \cite{CahillIMP:22} showed that generic templates yield injectivity whenever $n\geq 2d$. 
When the templates are drawn from a standard Gaussian distribution, \cite{MixonQ:22} established the distortion bound
\[
\kappa(\Phi) 
\leq \left(4e^{\frac{3}{2}}|G|^{\frac{5}{2}} \ln^{\frac{1}{2}}(e|G|)\right)^{1+\varepsilon}
\]
for arbitrary $\varepsilon > 0$ and large enough $n \geq N(\varepsilon)$, with high probability. 
This yields the best known general bound
\begin{equation}
\label{eq.max filter bank bound}
c_2(\mathbb R^d/G) 
\leq 4e^{\frac{3}{2}}|G|^{\frac{5}{2}}\ln^{\frac{1}{2}}(e|G|),
\end{equation}
which is valid for all finite subgroups $G\leq \O(d)$.
In the case where $G\leq \Oname(d)$ is a compact (and possibly infinite) subgroup, \cite{MixonQ:22} again proved generic injectivity under the condition $n\geq 2d$. 
More recently, \cite{Qaddura:25} established bilipschitz behavior for generic $\Phi$, provided that all nonzero orbits of $G$ have equal dimension. 
It remains open whether every injective max filter bank is bilipschitz when $G$ is compact.

\subsubsection{Flat tori}
\label{sec.sub flat tori}

For nonorthogonal subgroups $G\leq \operatorname{E}(d)$, the only nontrivial bounds on $c_2(\mathbb R^d/G)$ that we know of take $G$ to be a translation lattice $T$, meaning $\mathbb R^d/T$ is a flat torus. 
In this setting, the sharpest known asymptotic upper bound
\[
c_2(\mathbb R^d/T) 
\leq \O(\sqrt{d\log d})
\]
was established for all lattices in \cite{AgarwalRT:20} using Gaussian densities and building on earlier work by~\cite{HavivR:10}.
In dimension two, the proof of~\cite[Theorem~3]{HavivR:10} provides the following explicit upper bound for all lattices:
\[
c_2(\mathbb R^2/T) 
< 8.
\]
In the other direction, \cite{HavivR:10} used a Fourier-based lower bound from~\cite{KhotN:06} to show that for sufficiently large $d$, there exists a translation lattice $T$ such that
\[
c_2(\mathbb R^d/T)
\geq \Omega(\sqrt d). 
\]
This demonstrates that high-dimensional flat tori can be highly non-Euclidean.

Recently, \cite{VallentinM:23,HeimendahlLVZ:22} used semidefinite programming to obtain more detailed estimates. 
The strongest general lower bound to date appears in~\cite[Theorem~5.1]{HeimendahlLVZ:22}:
\[
c_2(\mathbb R^d/T) 
\geq \frac{\pi}{\sqrt d}\lambda^*(T)\mu(T),
\]
where $\mu(T)$ denotes the covering radius of the lattice and $\lambda^*(T)$ denotes the length of the shortest nonzero vector in the dual lattice. 
A simpler proof of this bound was later given in~\cite{VallentinM:23}.
For certain specific lattices, exact Euclidean distortions have been computed. 
If $T$ is rectangular, i.e., generated by an orthogonal basis, then
\[
c_2(\mathbb R^d/T)
=\frac{\pi}{2},
\]
since $\mathbb R^d/T$ is a product of circles, and $c_2(S^1) = \frac{\pi}{2}$ by Theorem~6.1 in~\cite{HeimendahlLVZ:22} (see \cref{prop.max-product,prop.known-distortions}). 
In dimension two, the Euclidean distortion of a torus has been expressed as the solution to a specific nonconvex optimization problem over a Voronoi cell in $\mathbb R^2$; see~\cite[Theorem~8.3]{HeimendahlLVZ:22}. 
Building on this, \cite[Theorem~4.1(2)--(3)]{VallentinM:23} established that
\[
c_2(\mathbb R^2/A_2) 
= \frac{\sqrt{8}\pi}{\sqrt{27}} \qquad \text{and} \qquad c_2(\mathbb R^8/E_8) 
= \frac{\sqrt{15}\pi}{\sqrt{32}},
\] 
where $A_2\subseteq \mathbb R^2$ denotes the hexagonal lattice, and $E_8\subseteq \mathbb R^8$ denotes the root lattice corresponding to the exceptional root system in $\mathbb R^8$.

To date, the rectangular lattices in $\mathbb R^d$, the hexagonal lattice in $\mathbb R^2$, the $E_8$ root lattice in $\mathbb R^8$, and products thereof are the only translation lattices for which the Euclidean distortion of the associated orbit space is known in closed form.

\subsubsection{Quotients of infinite-dimensional Hilbert spaces}
\label{sec.sub hilbert quotients}

To our knowledge, \cite{CahillIM:24} is the only work that has investigated the distortion of quotients of infinite-dimensional Hilbert spaces by subgroups of their linear isometries. 
In this context, special care must be taken when defining the quotient space, as orbits may not be closed; the equivalence class of a point in $V$ is instead defined to be the \emph{closure} of its orbit.
With this setup, \cite{CahillIM:24} studied three particular classes of quotient spaces.
First, they studied the action of the base field's unit-scalar group on infinite-dimensional Hilbert spaces, determining an exact Euclidean distortion of $\sqrt 2$ for the resulting quotients.
Second, they considered the quotient of $ \ell^2(\mathbb N\to \mathbb R^d)$ by the group $S_\infty$ of bijections of $\mathbb N$ acting via precomposition.
They showed that the distortion of the quotient is $1$ if $d=1$ and $\infty$ if $d\geq 3$; the case $d=2$ remains open.
Finally, they analyzed the quotient of $ \ell^2(\mathbb Z \to \mathbb R)$ by the group $\mathbb Z$ of translations acting via precomposition. 
In this case, the distortion gives a lower bound on the distortion of many other related quotients, but it remains open whether this distortion is infinite.

\subsection{Summary of results}\label{sec.results}

This paper presents two different flavors of contributions. 
In \cref{sec.tools}, we develop general tools for bounding the distortion of metric quotients, and in \cref{sec.applications}, we apply these tools to various families of quotients by groups of Euclidean isometries.

\subsubsection{General results}

In \cref{sec.symmetry lemmas}, we present two particularly useful \textit{equivariant embedding lemmas}. 
For any metric space $X$ equipped with an isometric action by a compact group $G$, the first lemma states that any embedding of $X$ into a Hilbert space can be promoted to a $G$-equivariant bilipschitz map with the same (or better) distortion. 
The second lemma states that a $G$-equivariant map $f\colon X\to Y$ descends to a map between quotient spaces $f_{/G}\colon X/G\to Y/G$ with the same (or better) distortion.
    
In \cref{sec.23-groups}, we introduce the \textit{Euclidean contortion} $\Upsilon(G)$ of an abstract group $G$ of finite order, which is the largest possible Euclidean distortion of a quotient $V/G$ where $V$ is a finite-dimensional orthogonal $G$-representation.
For any metric space $X$ equipped with an isometric action by $G$, we establish in \cref{thm.upsilon-bound} the inequality
\[
c_2(X/G)
\leq \Upsilon(G)\cdot c_2(X).
\]
We furthermore compute $\Upsilon(G)$ exactly for the groups $G$ of order at most three and provide tools for bounding the contortions of larger finite groups.

In \cref{sec.reflection like}, we introduce a mechanism that promotes an embedding of $X/G$ to an embedding of $X$.
The resulting \textit{quotient--orbit embedding} induces an upper bound on $c_2(X)$ in terms of $c_2(X/G)$; see \cref{thm.align preserve}. 
As an application, we establish in \cref{corr.glued space} the inequality
\[
c_2(Y)^2
\leq 2\cdot c_2(X)^2 +2,
\]
where $Y$ is the metric space formed by gluing two copies of $X$ along a closed subset $Z\subseteq X$.

In \cref{sec.isotropy}, we use a local approximation of a metric space to obtain a lower bound on the Euclidean distortion of that metric space.
In particular, for every Riemannian manifold $M$ equipped with a wandering\footnote{An action of a group $G$ on a topological space $X$ is called \emph{wandering} if for each $x\in X$, there exists an open neighborhood $U\subseteq X$ such that $\{g\in G:g\cdot U\cap U\neq \varnothing\}$ is finite.} isometric action by a discrete group $G$, where the stabilizer $G_p$ of $p$ acts on $T_pM$ via the differential, we prove that
\[
c_2(M/G)
\geq c_2(T_pM/G_p).
\]
Next, given a finite group $G\leq \O(W)$ and a subspace $V\leq W$ with pointwise stabilizer $G_V$, we use the above inequality to prove the lower bound
\begin{equation}
\label{eq.wed ineq}
c_2(W/G)
\geq c_2(V^\perp/G_V).
\end{equation}

In \cref{sec.euclidean tool}, we consider any Euclidean isometry group $\Gamma\leq \E(V)$ whose translation subgroup $T$ is a subspace and whose point group $G\leq \O(V)$ is closed. 
We show that $T^\perp$ is $G$-stable and $V/\Gamma$ is isometric to $T^\perp/G$.

\subsubsection{Applications}

\begin{table}[t]
\begin{center}
\begin{tabular}{|p{5cm}p{3.5cm}p{3.5cm}p{2cm}|}
        \hline
        \parbox[c][0.8cm][l]{3cm}{}
        $G$ & $V$ & $c_2(V/G)$ & reference\\
        \hline\hline
        \parbox[c][0.8cm][l]{3cm}{}
        $\langle e^{2\pi i/r}\rangle$ & $\mathbb C^n$ & $r\sin(\tfrac{\pi}{2r})$ & Thm.~\ref{thm.root unity} \\
        \hline
        \parbox[c][0.8cm][l]{3cm}{}
        $\O(r)$ & $\mathbb R^{r\times n}$, ~$n\geq 2$ & $\sqrt{2}$ & Prop.~\ref{prop.gram bounds} \\
        \hline
        \parbox[c][0.8cm][l]{3cm}{}
        $\U(r)$ & $\mathbb C^{r\times n}$, ~$n\geq 2$ & $\sqrt{2}$ & Rmk.~\ref{rk.U(r)} \\
        \hline
        \parbox[c][0.8cm][l]{3cm}{}
        $\SO(r)$ & $\mathbb R^{r\times n}$, ~$n\geq r\geq 2$ &  $[\,\sqrt{2},\,2\sqrt{2}\,]$ & Thm.~\ref{thm.so psi lower bound} \\
        \hline
        \parbox[c][0.8cm][l]{3cm}{}
        $\SO(n)$ $\cap$ reflection group & $\mathbb R^n$ & $[\,\sqrt{2},\,2\,]$ & Thm.~\ref{thm.alternating subgroup} \\
        \hline
        \parbox[c][0.8cm][l]{3cm}{}
        wallpaper group $**$ & $\mathbb R^2$ & $\frac{\pi}{2}$ & Thm.~\ref{thm.wallpaper groups} \\
        \hline
        \parbox[c][0.8cm][l]{3cm}{}
        wallpaper group $2{*}22$ & $\mathbb R^2$ & $\sqrt{2}$ & Thm.~\ref{thm.wallpaper groups} \\
        \hline
        \parbox[c][0.8cm][l]{3cm}{}
        wallpaper group $4{*}2$ & $\mathbb R^2$ & $2\sqrt{2-\sqrt{2}}$ & Thm.~\ref{thm.wallpaper groups} \\
        \hline
        \parbox[c][0.8cm][l]{3cm}{}
        $\E(r)$ & $\mathbb R^{r\times n}$, ~$n\geq 3$ & $\sqrt{2}$ & Ex.~\ref{cor.euclidean-orthogonal-isometry} \\
        \hline
    \end{tabular}
    \caption{New bounds on Euclidean distortions for $V/G$.\label{table2}}
    \end{center}
    \end{table}

In \cref{sec.root unity}, we consider the orbit space $\mathbb C^n/C_r$, where $C_r := \langle e^{2\pi i/r}\rangle$. 
In the case where $n=1$, the Euclidean distortion was previously determined by Corollary~38 in \cite{CahillIM:24} (see \cref{prop.known-distortions}).
In \cref{thm.root unity}, we establish that a quotient--orbit embedding delivers the same distortion in general:
\[
c_2(\mathbb C^n/ C_r) 
= r\sin(\tfrac{\pi}{2r}).
\]
    
In~\cref{sec.so}, we start by enunciating a lower bound on the Euclidean distortions of the orbit spaces\footnote{For each $F\in\{\mathbb R,\mathbb C\}$, we view the matrix space $F^{r\times n}$ as a real Hilbert space with Frobenius inner product $\langle X,Y\rangle := \operatorname{Re}(\operatorname{Tr}(X^* Y))$, and given $G\leq\operatorname{GL}(r,F)$, we assume without mention that $G$ acts on $F^{r\times n}$ by left matrix multiplication.} $\mathbb{R}^{r\times n}/\O(r)$ and $\mathbb{C}^{r\times n}/\operatorname{U}(r)$ that matches the upper bound given by~\cite{BalanD:22}:
\[
c_2\big(\mathbb{R}^{r\times n}/\O(r)\big)
=\sqrt{2},
\qquad
c_2\big(\mathbb{C}^{r\times n}/\operatorname{U}(r)\big)
=\sqrt{2},
\]
provided $n\geq 2$.
These in turn generalize the $r=1$ cases, which were treated in Corollaries~36 and~37 in~\cite{CahillIM:24} (see \cref{prop.known-distortions}).
We then use a quotient--orbit embedding to extend this analysis to the related orbit space $\mathbb R^{r\times n}/\SO(r)$.
In particular, a careful modification of the Gram matrix and Pl\"ucker coordinates leads to the two-sided bounds
\[
\sqrt 2 
\leq c_2\big(\mathbb R^{r\times n}/\SO(r)\big) 
\leq 2\sqrt 2
\]
for all $n\geq r\geq 2$; see \cref{thm.so psi lower bound}.    

In \cref{sec.alternating}, we consider a finite-group analogy to the above relationship between $\O(r)$ and $\SO(r)$.
In particular, we recall that for a reflection group $G\leq \O(V)$, the mapping that sends each orbit in $V/G$ to the unique representative in a fixed Weyl chamber produces an embedding $V/G\to V$ of distortion $c_2(V/G)=1$~\cite{MixonP:22}.
We extend this analysis to the orbit space of the subgroup $G^+ = G\cap \SO(V)$ by viewing $V/G^+$ as a glued space consisting of two copies of $V/G$:
\[
\sqrt 2
\leq c_2(V/G^+)
\leq 2,
\]
whenever $G^+$ is nontrivial. 
Currently, this Euclidean distortion is exactly known only in the case where $V$ is the plane and $G=D_n$, i.e., $G^+=C_n$, in which case Corollary~38 in \cite{CahillIM:24} (see \cref{prop.known-distortions}) gives
\[
c_2(V/G^+)
=n\sin(\tfrac{\pi}{2n})
\in[\sqrt 2,\tfrac{\pi}{2}),
\]
which increases towards $\frac{\pi}{2}$ as $n\to \infty$.


In \cref{sec.tran klien}, we analyze quotients of the plane by the wallpaper groups.
In particular, we determine the exact Euclidean distortion for a handful of these quotients, and we obtain two-sided bounds in all other cases.
Overall, we find that
\[
c_2(\mathbb R^2/G)
\leq 8\sqrt 2
\]
for every wallpaper group $G$.

In \cref{sec.euclidean}, we apply the result of \cref{sec.euclidean tool} by considering subgroups of the form $G\ltimes V\leq \E(V^n)$, where $G\leq \O(V)$ is a compact group and the action is given by the diagonal action on the $n$-fold direct sum. 
In \cref{thm.EG}, we establish that
\[
c_2\big(V^n/(G\ltimes V)\big) 
= c_2(V^{n-1}/G)
\]
for all $n\geq 2$. 
In the special case where $G=\O(r)$, this yields the explicit formula:
\[
c_2(\RR^{r\times n}/\E(r)) 
= \left\{\begin{array}{cl}
        \sqrt{2} &\text{if } n\geq 3,\\
        1 &\text{if } n=2.
    \end{array}\right.
\]

In \cref{sec.wednesday theorem}, we demonstrate the utility of the general bound \eqref{eq.wed ineq} by exhibiting families of group quotients whose distortions grow unboundedly with the size of the group. 
These groups arise from real-world instances where an object of interest is represented as a matrix only after selecting an arbitrary labeling of sorts, thereby introducing a permutation ambiguity.

\section{Toolbox of general results}
\label{sec.tools}

In this part, we introduce several tools we developed to bound the Euclidean distortion of a metric space quotiented by isometries.

\subsection{A pair of equivariant embedding lemmas}
\label{sec.symmetry lemmas}

In this section, we present two \emph{equivariant embedding lemmas} that allow one to convert between bilipschitz embeddings that interact with group actions in different ways.
These lemmas are illustrated by the following diagrams:
\[
\begin{aligned}
G\curvearrowright X&\xlongrightarrow{\phantom{G}} H\\[0pt]
&~~\,\RotImplies\text{\subsmall{~~ 1st EE Lemma~}}\\[0pt]
X&\xlongrightarrow{G} L^2(G,H)
\end{aligned}
\hspace{1in}
\begin{aligned}
X&\xlongrightarrow{G} Y\\[0pt]
&~~\,\RotImplies\text{\subsmall{~~ 2nd EE Lemma~}}\\[0pt]
X/G&\xlongrightarrow{\phantom{G}} Y/G
\end{aligned}
\]

The first lemma concerns a situation in which a compact group $G$ acts isometrically on a metric space $X$, and we have a bilipschitz embedding of $X$ into a Hilbert space $H$ which need not respect the group action. We show that one can convert such an embedding into a $G$-equivariant bilipschitz embedding $X\to L^2(G,H)$ without increasing the distortion. The following proposition clarifies notation for general locally compact groups, and its proof can be found in \cref{app.bochner}.

\begin{proposition}
    \label{prop.hilbert L2}
    Let $H$ be a Hilbert space and $G$ a locally compact group equipped with a right-invariant Haar measure $\mu$ on its Borel $\sigma$-algebra. Define $L^2(G,H)$ to be the space of Borel-measurable functions $f\colon G\to H$ such that
    \[\int_G \|f(g)\|_H^2\, d\mu(g) < \infty,\]
    with functions identified if they agree $\mu$-almost everywhere. Then the following hold:
    \begin{propenum}
    \item $L^2(G,H)$ is a Hilbert space under the inner product
    \[\langle f_1,f_2\rangle_{L^2(G,H)} := \int_G\big\langle f_1(g),f_2(g)\big\rangle_H\, d\mu(g).\]
    \item The group action of $G$ on $L^2(G,H)$ defined by
    \[(g\cdot f)(h) = f(hg)\]
    induces a unitary representation $\rho\colon G\to \U(L^2(G,H))$ that is strongly continuous, meaning $\rho$ is continuous with respect to the strong operator topology on $\U(L^2(G,H))$.
    \end{propenum}
\end{proposition}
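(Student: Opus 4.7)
The plan is to handle parts (a) and (b) separately, using the standard machinery of Bochner integration.

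For part (a), I would first verify that the inner product is well-defined: pointwise Cauchy--Schwarz gives $|\langle f_1(g),f_2(g)\rangle_H|\leq \|f_1(g)\|_H\|f_2(g)\|_H$, and integrating against $\mu$ using the scalar Cauchy--Schwarz inequality bounds the integral by $\|f_1\|_{L^2}\|f_2\|_{L^2}<\infty$. Measurability of $g\mapsto \langle f_1(g),f_2(g)\rangle_H$ follows from the fact that the inner product on $H$ is continuous in each argument. Bilinearity, symmetry, and positive definiteness (modulo the identification on null sets) are routine. For completeness, I would take a Cauchy sequence $(f_n)\subseteq L^2(G,H)$, pass to a subsequence with $\|f_{n_{k+1}}-f_{n_k}\|_{L^2}< 2^{-k}$, and show via a monotone convergence / Fatou argument that $\sum_k\|f_{n_{k+1}}(g)-f_{n_k}(g)\|_H$ is finite for $\mu$-a.e.\ $g$; by completeness of $H$, this gives a pointwise a.e.\ limit $f(g)$. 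Another application of Fatou's lemma confirms $f\in L^2(G,H)$ and that $f_n\to f$ in norm.

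For part (b), I would first check that the action is well-defined and isometric. Given $f\in L^2(G,H)$ and $g\in G$, the map $h\mapsto f(hg)$ is Borel-measurable because right multiplication by $g$ is a homeomorphism of $G$; moreover, by right-invariance of $\mu$,
\[
\int_G\|f(hg)\|_H^2\,d\mu(h)=\int_G\|f(h)\|_H^2\,d\mu(h),
\]
so $g\cdot f\in L^2(G,H)$ and $\rho(g)$ is an isometry. The identity $(gh)\cdot f = g\cdot(h\cdot f)$ follows from $f((kg)h)=f(k(gh))$, hence $\rho$ is a homomorphism. Since each $\rho(g)$ has an isometric inverse $\rho(g^{-1})$, $\rho(g)$ is unitary.

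The main obstacle will be strong continuity. The strategy is a standard density $3\varepsilon$-argument. First, I would establish that the space $C_c(G,H)$ of continuous, compactly supported $H$-valued functions is dense in $L^2(G,H)$; this reduces, via approximation of the target by finite-dimensional subspaces and the vector-valued analogue of Lusin's theorem, to the classical density result for scalar $L^2$. For $\varphi\in C_c(G,H)$ with support $K$, and for $g_n\to g$ in $G$, the translates $h\mapsto \varphi(hg_n)$ are supported in the compact set $K\cdot g^{-1}\cdot V$ for $n$ large (where $V$ is a fixed compact neighborhood of $e$). Uniform continuity of $\varphi$ on compact sets, together with the dominated convergence theorem applied over this eventually-common compact support, yields $\rho(g_n)\varphi\to \rho(g)\varphi$ in $L^2$-norm. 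Finally, a $3\varepsilon$-argument using isometry of the $\rho(g)$ and density of $C_c(G,H)$ upgrades this to strong continuity on all of $L^2(G,H)$.
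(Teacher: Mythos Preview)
Your argument for part (b) is essentially the paper's: check isometry via right-invariance, prove strong continuity first on $C_c(G,H)$ by dominated convergence, then extend by a $3\varepsilon$ density argument. That part is fine.

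The genuine gap is in part (a). You treat it as a routine verification, but the paper devotes an entire appendix (\cref{app.bochner}) to exactly the issue you glide over: it is \emph{not} automatic that the set of Borel-measurable, square-norm-integrable functions $G\to H$ is closed under addition when $H$ is non-separable. The problem is that $(f_1,f_2)\colon G\to H\times H$ is only measurable with respect to the product $\sigma$-algebra $\mathcal B(H)\otimes\mathcal B(H)$, which is strictly smaller than $\mathcal B(H\times H)$ in the non-separable case; hence continuity of addition (or of the inner product) on $H\times H$ does not by itself yield Borel measurability of $f_1+f_2$ or of $g\mapsto\langle f_1(g),f_2(g)\rangle_H$. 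Your sentence ``measurability \ldots\ follows from the fact that the inner product on $H$ is continuous in each argument'' does not survive this objection. The paper in fact exhibits (under extra set-theoretic hypotheses, \cref{prop.fail vector}) a probability space and Borel-measurable $f_1,f_2$ whose sum is not a.e.\ equal to any Borel-measurable function, so this is not a phantom worry.

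The paper's fix is to invoke the Radon property of Haar measure: by \cref{prop.compact measure}, every Borel-measurable function with $\sigma$-finite support (in particular every $2$-norm-integrable one) into a Banach space is $\mu$-essentially separably valued, hence $\mu$-strongly measurable by the Pettis criterion (\cref{prop.banach L2}). Once one knows the functions in question are strongly measurable, the Bochner-space machinery takes over and $L^2(G,H)$ is a Hilbert space by standard results. Your completeness argument is then correct in spirit, but it too tacitly assumes the pointwise a.e.\ limit $f$ is Borel-measurable; strong measurability of the approximants is again what rescues this. In short, the missing idea is precisely the passage from Borel to strong measurability via the Radon property of $\mu$.
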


With the above notational setup, we specialize to the setting of compact groups and enunciate our first lemma.

\begin{lemma}[first equivariant embedding lemma]
\label{lem.enforcing symmetry}
Let $X$ be a metric space equipped with an isometric action by a compact group $G$, and let $\phi\colon X\to H$ be a map into a Hilbert space. Equip $G$ with its unique normalized bi-invariant Haar measure, and define $L^2(G,H)$ as in \cref{prop.hilbert L2}.
Then there exists a map $\psi\colon X\to L^2(G,H)$ such that
    \begin{lemenum}
        \item $\psi$ is $G$-equivariant, i.e., $\psi\circ g = g\circ \psi$ for each $g\in G$, and
        \item $\kappa(\psi)\leq \kappa(\phi)$.
    \end{lemenum}
\end{lemma}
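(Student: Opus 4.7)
The natural construction is the \emph{orbit lift}: define $\psi\colon X\to L^2(G,H)$ by
\[
\psi(x)(g) := \phi(g\cdot x).
\]
Equivariance is a direct computation against the action given in \cref{prop.hilbert L2}: for any $k\in G$, one has $\psi(k\cdot x)(g)=\phi(gk\cdot x)=\psi(x)(gk)=(k\cdot\psi(x))(g)$, so $\psi\circ k=k\circ\psi$. This is the mechanism producing conclusion (a).

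For conclusion (b), the key observation is that the $G$-action on $X$ is isometric, so for any $x,x'\in X$ and any $g\in G$, the $\phi$-distortion bounds applied to $gx$ and $gx'$ give
\[
\alpha^2\, d_X(x,x')^2
\;\leq\; \|\phi(g x)-\phi(g x')\|_H^2
\;\leq\; \beta^2\, d_X(x,x')^2,
\]
where $\alpha,\beta$ are the lower and upper Lipschitz bounds of $\phi$. Integrating against the normalized Haar measure $\mu$ on $G$ (which has total mass one) and recognizing the middle integrand as the pointwise norm defining the $L^2(G,H)$ inner product yields
\[
\alpha^2\, d_X(x,x')^2
\;\leq\; \|\psi(x)-\psi(x')\|_{L^2(G,H)}^2
\;\leq\; \beta^2\, d_X(x,x')^2,
\]
so $\psi$ has lower Lipschitz bound $\geq \alpha$ and upper Lipschitz bound $\leq \beta$, giving $\kappa(\psi)\leq \beta/\alpha=\kappa(\phi)$.

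The one step that requires care is verifying that $\psi(x)$ actually lies in $L^2(G,H)$. If $\kappa(\phi)=\infty$ the conclusion is vacuous (one can take $\psi$ to be any equivariant map, e.g.\ the constant map to $0$), so I would reduce to the case where $\phi$ is bilipschitz and hence continuous. Combined with the continuity of the orbit map $g\mapsto g\cdot x$ (which follows from the action being isometric, together with a standard argument that such an action is jointly continuous on a metric space under the natural topology on $G$), this makes $g\mapsto \phi(g\cdot x)$ a continuous map from the compact space $G$ into $H$. Its image is therefore compact, hence bounded, and since $\mu(G)=1$ the $L^2$-norm is finite; Borel measurability is immediate from continuity.

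The only genuine obstacle is the well-definedness step above: one must ensure the composition $g\mapsto \phi(g\cdot x)$ is integrable, which I handle by reducing to the bilipschitz case and appealing to compactness of $G$. The equivariance and distortion bounds are then formal, relying only on the $G$-invariance of $\mu$ and the isometric action on $X$.
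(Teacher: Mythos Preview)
Your proof is correct and follows essentially the same approach as the paper: the same orbit-lift construction $\psi(x)(g)=\phi(g\cdot x)$, the same reduction to the bilipschitz case for well-definedness, and the same integration of the pointwise Lipschitz bounds against the normalized Haar measure. The only difference is cosmetic---you spell out the continuity of the orbit map a bit more explicitly than the paper does.
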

\begin{proof}
The result is trivial if $\kappa(\phi)=\infty$, since then we may take $\psi\equiv 0$.
Thus, we may assume $\kappa(\phi) < \infty$. 
In particular, $\phi$ is continuous. On account of $G$'s action on $X$, each $g\in G$ can be interpreted as a function $g\colon X\to X$, and with this notation, we define the embedding $\psi\colon X\to L^2(G,H)$ by
\begin{align*}
\psi(x)(g) = \phi(g(x)),
\end{align*}
that is, $\psi(x) = \phi\circ \ev_x$, where $\ev_x(g) := g(x)$ denotes the evaluation map. 
The map $\psi$ is well defined with the codomain $L^2(G,H)$, as it maps each $x\in X$ to a continuous (and hence bounded) function.
        
For (a), i.e., the $G$-equivariance of $\psi$, let $x\in X$ and $g,h\in G$.
Then
    \begin{align*}
        \psi(gx)(h) = \phi\circ \ev_{gx}(h) = \phi(hgx) = \phi\circ \ev_x(hg) = \psi(x)(hg) = (g\cdot (\psi(x)))(h),
    \end{align*}
and so $\psi(gx) = g\cdot (\psi(x))$.
    
For (b), i.e., $\kappa(\psi)\leq\kappa(\phi)$, note that
    \begin{align*}
        \|\psi(x) - \psi(y)\|_{L^2(G,H)}^2 
        & = \int_G\|\phi\circ \ev_x(h) - \phi\circ \ev_y(h)\|_H^2\,d\mu(h)
        \\ & = \int_G\|\phi\circ h(x) - \phi\circ h(y)\|_H^2\,d\mu(h).
    \end{align*}
Let $\alpha$ and $\beta$ denote the optimal lower and upper Lipschitz bounds for $\phi$, respectively. Applying the lower Lipschitz inequality for $\phi$ to the integrand, we obtain
    \begin{align*}
        \|\psi(x) - \psi(y)\|_{L^2(G,H)}^2 \geq \int_G\alpha^2 d_X^2(hx,hy)\,d\mu(h)
        = \alpha^2 d_X^2(x,y)\int_Gd\mu(h)
        = \alpha^2 d_X^2(x,y),
    \end{align*}
and we may similarly obtain an upper bound of $\beta^2 d_X^2(x,y)$. 
Taking square roots, we get a two-sided Lipschitz inequality:
    \begin{align*}
        \alpha \cdot d_X(x,y)\leq \|\psi(x) - \psi(y)\|_{L^2(G,H)} \leq \beta \cdot d_X(x,y),
    \end{align*}
    and hence a distortion bound of $\kappa(\psi) \leq \beta/\alpha = \kappa(\phi)$.
\end{proof}

Our second lemma allows one to descend an equivariant bilipschitz map to the quotient without increasing distortion.
(In what follows, the assumption that orbits are closed may be dropped if \cref{def.distortion} is generalized to pseudometric spaces.)

\begin{lemma}[second equivariant embedding lemma]\label{lem:quotient-distortion}
Let $G$ be an abstract group acting by isometries on metric spaces $X$ and $Y$ in such a way that every $G$-orbit in each space is closed. 
Let $f\colon X\to Y$ be a $G$-equivariant map with optimal lower and upper Lipschitz bounds $\alpha$ and $\beta$, respectively. 
Then the induced map $f_{/G}\colon X/G\to Y/G$ has lower Lipschitz bound at least $\alpha$ and upper Lipschitz bound at most $\beta$. 
In particular, $\kappa(f_{/G})\leq \kappa(f)$.
\end{lemma}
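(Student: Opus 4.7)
The plan is to read both Lipschitz inequalities directly off of the definition of the quotient metric
\[
d_{X/G}([x],[y]) = \inf_{p\in [x],\,q\in [y]} d_X(p,q),
\]
using only equivariance of $f$ and the fact that $G$ acts by isometries. The first order of business is to verify that $f_{/G}$ is well defined and to describe the image of an orbit. Since $f$ is $G$-equivariant, $f(G\cdot x) = G\cdot f(x)$ as subsets of $Y$. Hence $[x]\mapsto [f(x)]$ depends only on the orbit $[x]$, and moreover $f$ maps the orbit $[x]$ \emph{onto} the orbit $[f(x)]$. (The closed-orbit hypothesis is only needed to ensure that $d_{X/G}$ and $d_{Y/G}$ are honest metrics rather than pseudometrics.)

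For the upper bound, I would pick any representatives $p\in[x]$ and $q\in[y]$. Then $f(p)\in[f(x)]$ and $f(q)\in[f(y)]$, so
\[
d_{Y/G}\bigl(f_{/G}[x],f_{/G}[y]\bigr) \;\leq\; d_Y(f(p),f(q)) \;\leq\; \beta\cdot d_X(p,q).
\]
Taking the infimum of the right-hand side over all such pairs $(p,q)$ yields the desired inequality $d_{Y/G}(f_{/G}[x],f_{/G}[y]) \leq \beta\cdot d_{X/G}([x],[y])$, establishing that the upper Lipschitz bound of $f_{/G}$ is at most $\beta$.

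For the lower bound, I would use the orbit-surjectivity observation from the first paragraph: every pair $(p',q')\in [f(x)]\times [f(y)]$ has the form $(f(p),f(q))$ for some $(p,q)\in [x]\times [y]$. Consequently,
\[
d_{Y/G}\bigl(f_{/G}[x],f_{/G}[y]\bigr) \;=\; \inf_{p\in[x],\,q\in[y]} d_Y(f(p),f(q)) \;\geq\; \alpha\cdot \inf_{p\in[x],\,q\in[y]} d_X(p,q) \;=\; \alpha\cdot d_{X/G}([x],[y]),
\]
so the lower Lipschitz bound of $f_{/G}$ is at least $\alpha$. Combining the two bounds gives $\kappa(f_{/G})\leq \beta/\alpha = \kappa(f)$.

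There is no real obstacle here; the only step requiring a moment's thought is recognizing that the infimum defining $d_{Y/G}([f(x)],[f(y)])$, which a priori ranges over the orbits $[f(x)]$ and $[f(y)]$ inside $Y$, can be rewritten as an infimum over $(p,q)\in[x]\times[y]$ pushed forward through $f$. This uses surjectivity of the orbit map $[x]\to[f(x)]$ granted by equivariance, after which the two Lipschitz inequalities reduce immediately to the hypothesized bounds on $f$ itself.
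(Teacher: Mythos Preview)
Your proof is correct and follows essentially the same approach as the paper's. The only cosmetic difference is that the paper parameterizes the quotient infimum by a single group element (writing $\inf_{g\in G} d_X(gx,x')$ and invoking equivariance via $f(gx)=gf(x)$), whereas you parameterize by arbitrary pairs of representatives and invoke the orbit-surjectivity $f(G\cdot x)=G\cdot f(x)$; these are equivalent reformulations of the same computation.
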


\begin{proof}
Let $x,x'\in X$ and $g\in G$ be arbitrary. 
Using the Lipschitz constants of $f$, we write
\[
\alpha\cdot d_X(gx,x')
\leq d_Y\big(f(gx),f(x')\big) 
\leq \beta\cdot d_X(gx,x').
\]
Using the equivariance of $f$ and minimizing all three expressions over the choice of $g$ gives
\[
\alpha\cdot\inf_{g\in G} d_X(gx,x') 
\leq\inf_{g\in G} d_Y\big(gf(x),f(x')\big) 
\leq \beta\cdot \inf_{g\in G} d_X(gx,x') ,
\]
so that
\[
\alpha\cdot d_{X/G}(Gx,Gx')
\leq d_{Y/G}\big(Gf(x),Gf(x')\big)
\leq \beta\cdot d_{X/G}(Gx,Gx'),
\]
and finally,
\[
\alpha\cdot d_{X/G}(Gx,Gx')
\leq d_{Y/G}\big(f_{/G}(Gx),f_{/G}(Gx')\big)
\leq \beta\cdot d_{X/G}(Gx,Gx').
\tag*{\qedhere}
\]
\end{proof}

\subsection{The Euclidean contortion of a finite group}
\label{sec.23-groups}
In this section, we study a new quantity that we assign to any abstract group of finite order.

\begin{definition}\label{def.upsilon}
The \textbf{(Euclidean) contortion} of a finite group $G$ is given by
\[
\Upsilon(G) 
:= \sup_{V\in \operatorname{Rep}(G)}c_2(V/G),
\]
where $\operatorname{Rep}(G)$ denotes the set of (equivalence classes of) all finite-dimensional orthogonal representations of $G$.
\end{definition}

Every finite group has finite contortion as a consequence of \eqref{eq.max filter bank bound}. 
The contortion of a group allows one to estimate the Euclidean distortion of the quotient of \textit{any} metric space by \textit{any} isometric action of that group.

\begin{theorem}\label{thm.upsilon-bound}
If a finite group $G$ acts by isometries on a metric space $X$, then
\[
c_2(X/G) 
\leq \Upsilon(G)\cdot c_2(X).
\]
\end{theorem}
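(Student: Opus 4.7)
The plan is to compose three maps whose distortions multiply appropriately: a near-optimal embedding of $X$ into a Hilbert space, the ``symmetrization'' supplied by the first equivariant embedding lemma, and an embedding of the symmetrized target's quotient that witnesses $\Upsilon(G)$.

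Fix $\epsilon>0$. By the definition of $c_2(X)$, choose a map $\phi\colon X\to H$ into a Hilbert space with $\kappa(\phi)<c_2(X)+\epsilon$. Applying \cref{lem.enforcing symmetry} promotes $\phi$ to a $G$-equivariant map $\psi\colon X\to L^2(G,H)$ with $\kappa(\psi)\le\kappa(\phi)$, where $L^2(G,H)$ carries the orthogonal $G$-action from \cref{prop.hilbert L2}. Then \cref{lem:quotient-distortion} descends $\psi$ to an induced map $\psi_{/G}\colon X/G\to L^2(G,H)/G$ with $\kappa(\psi_{/G})\le\kappa(\psi)<c_2(X)+\epsilon$.

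To finish, I need an embedding of $L^2(G,H)/G$ into some Hilbert space $K$ of distortion at most $\Upsilon(G)$ (up to an additive $\epsilon$); composing with $\psi_{/G}$ and letting $\epsilon\to 0$ then yields $c_2(X/G)\le\Upsilon(G)\cdot c_2(X)$. When $H$ is finite-dimensional this is immediate: $L^2(G,H)$ is then a finite-dimensional orthogonal $G$-representation (of dimension $|G|\dim H$, since $G$ is finite), and the defining supremum for $\Upsilon(G)$ delivers the required embedding directly.

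The step I expect to be most delicate is the case of infinite-dimensional $H$: I need the bound $c_2(V/G)\le\Upsilon(G)$ to hold for every (possibly infinite-dimensional) orthogonal $G$-representation $V$, even though $\Upsilon(G)$ is defined as a supremum over finite-dimensional representations only. I would treat this as a separate lemma by exploiting the isotypic decomposition $V\cong\bigoplus_{\pi\in\hat{G}}(W_\pi\otimes M_\pi)$, where the $W_\pi$ range over the finitely many (finite-dimensional) irreducibles of $G$ and $G$ acts trivially on the multiplicity spaces $M_\pi$. The goal is to lift a finite-dimensional witness of $\Upsilon(G)$ across the multiplicity directions using an equivariant quadratic or Gram-style invariant map of the kind appearing in the phase-retrieval and max-filter constructions discussed in \cref{sec.related-work}. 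The main technical obstacle is that the quotient of a direct sum by the diagonal $G$-action is not the direct sum of quotients, so verifying that the lift preserves the distortion bound requires a careful pairing between the invariants on $W_\pi$ and the linear structure on $M_\pi$.
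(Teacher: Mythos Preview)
Your overall strategy matches the paper's: embed $X$ near-optimally, symmetrize via the first equivariant embedding lemma, descend via the second, then invoke $\Upsilon(G)$ on the target. The difference is that the paper sidesteps your infinite-dimensional difficulty entirely by first invoking \cref{prop.finitely-determined}: it suffices to bound $c_2(B)$ for each \emph{finite} $B\subseteq X/G$, and the preimage $A\subseteq X$ of such a $B$ is finite (since $G$ is finite). An embedding of the finite set $A$ lands in a finite-dimensional Hilbert space, so after symmetrization one is quotienting a finite-dimensional orthogonal $G$-representation, and $\Upsilon(G)$ applies directly by definition.

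Your isotypic-decomposition plan for the infinite-dimensional case is unnecessary and, as you acknowledge, incomplete. The clean fix is the same finitely-determined trick applied at the other end: to show $c_2(V/G)\le\Upsilon(G)$ for an arbitrary orthogonal $G$-representation $V$, take any finite $F\subseteq V/G$, let $A\subseteq V$ be its (finite) preimage, and observe that the $G$-span $W$ of $A$ is a finite-dimensional subrepresentation with $F\subseteq W/G$, so $c_2(F)\le c_2(W/G)\le\Upsilon(G)$. This closes your argument without any Gram-style lifting across multiplicity spaces.
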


\begin{proof}
By Proposition~31 in~\cite{CahillIM:24} (See \cref{prop.finitely-determined}), it suffices to prove
\[
c_2(B) 
\leq \Upsilon(G)\cdot c_2(X)
\]
for an arbitrary finite sub-metric space $B\subseteq X/G$. 
Let $A\subseteq X$ be the (finite) preimage of $B$ through the quotient map, so that $G$ acts on $A$ by isometries and $A/G = B$. 
Now take any $\varepsilon > 0$, and fix a map $f\colon A \to H$ into a Hilbert space with distortion $\kappa(f) < c_2(A) + \varepsilon$. 
By the first equivariant embedding lemma (\cref{lem.enforcing symmetry}), we can select $f$ to be equivariant with respect to $G$, with $H$ being a finite-dimensional orthogonal representation of $G$.
    
By the second equivariant embedding lemma (\cref{lem:quotient-distortion}), the map $f_{/G}\colon B \to H/G$ has distortion at most $\kappa(f)<c_2(A) + \varepsilon$. 
Next, by the definition of $\Upsilon(G)$, there exists a map $h\colon H/G\to H'$ into a Hilbert space such that $\kappa(h)\leq \Upsilon(G)+\varepsilon$. 
Then the composition
\[
B 
\longto[f] H/G
\longto[h] H'
\]
has distortion at most $(c_2(A) + \varepsilon)(\Upsilon(G)+\varepsilon)$. 
Since $\varepsilon>0$ was arbitrary, it follows that
\[
c_2(B) 
\leq \Upsilon(G)\cdot c_2(A) 
\leq \Upsilon(G)\cdot c_2(X).
\tag*{\qedhere}
\]
\end{proof}

Since smaller groups have easier representation theories, we can determine their contortions exactly.
(The contortion of the order-$2$ group will be particularly relevant in \cref{sec.applications}.)

\begin{lemma}\label{lem.upsilon compute}
The Euclidean contortion of a group $G$ of order at most $3$ is given by
\[
\Upsilon(G) 
= \begin{cases}
        1 & \text{ if } |G|=1,\\
        \sqrt 2 & \text{ if }|G|=2,\\
        3/2 & \text{ if }|G|=3.
\end{cases}
\]
\end{lemma}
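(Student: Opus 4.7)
The plan is to handle each of the three cases separately, with matching upper and lower bounds on $\Upsilon(G)$.

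The case $|G|=1$ is immediate since $V/G=V$ for every representation $V$. For $|G|\in\{2,3\}$ the strategy is parallel. First, I would decompose an arbitrary orthogonal $G$-representation $V$ into real-irreducible isotypic components. When $|G|=2$, this produces an orthogonal splitting $V = V^+ \oplus V^-$ with the nontrivial element of $G$ acting as $+\mathrm{Id}$ on $V^+$ and $-\mathrm{Id}$ on $V^-$. When $|G|=3$, it produces $V = V^{\mathrm{triv}} \oplus W$ with $G$ acting trivially on $V^{\mathrm{triv}}$ and $W \cong \mathbb{C}^n$ as a $C_3$-representation via multiplication by $e^{2\pi i/3}$. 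Because $G$ acts trivially on the first summand, the quotient factors as an isometric product, so by \cref{prop.max-product},
\[
c_2(V/G) \;=\; \max\bigl\{1,\, c_2(W/G)\bigr\}.
\]
This reduces the problem to bounding $c_2(W/G)$ uniformly in $\dim W$.

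For $|G|=2$, the first row of \cref{table1} gives $c_2(\mathbb{R}^n/\{\pm 1\}) = \sqrt{2}$ for $n\geq 2$, while for $n\in\{0,1\}$ the quotient embeds isometrically into Euclidean space. Together with the realization $V=\mathbb{R}^2$, which witnesses the value $\sqrt 2$, this yields $\Upsilon(G) = \sqrt 2$.

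For $|G|=3$, the lower bound $\Upsilon(G) \geq 3/2$ comes from the rank-one case $V=\mathbb{C}$, since row~3 of \cref{table1} gives $c_2(\mathbb{C}/C_3) = 3\sin(\tfrac{\pi}{6}) = 3/2$. The matching upper bound $c_2(\mathbb{C}^n/C_3) \leq 3/2$ for all $n$ is the $r=3$ instance of \cref{thm.root unity}, proved later in the paper by a direct generalization of the $n=1$ embedding
\[
z \;\longmapsto\; \bigl(\cos(\tfrac{\pi}{6})|z|,\;\sin(\tfrac{\pi}{6})\cdot z^{3}/|z|^{2}\bigr),
\]
which one can invoke here as a forward reference.

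The main obstacle is precisely this upper bound $c_2(\mathbb{C}^n/C_3) \leq 3/2$ for $n\geq 2$. In the one-dimensional case the distortion reduces to the trigonometric identity $|1 - e^{i\theta}| = 2\sin(\theta/2)$, but in higher dimensions one has to control $\|z^{\otimes 3} - w^{\otimes 3}\|$ uniformly across pairs of $C_3$-orbits and identify the extremal configuration; this combinatorial/geometric extremization is what makes the computation nontrivial and motivates postponing it to the unified treatment of $\mathbb{C}^n/C_r$ in \cref{sec.root unity}.
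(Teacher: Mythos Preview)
Your proposal is correct and follows essentially the same approach as the paper: decompose $V$ into the trivial isotypic component and its complement, use \cref{prop.max-product} to reduce to the nontrivial part, then invoke Corollary~36 of~\cite{CahillIM:24} for $|G|=2$ and forward-reference \cref{thm.root unity} for $|G|=3$. You are slightly more careful than the paper in handling the low-dimensional edge cases $\dim V^-\le 1$, but otherwise the arguments coincide.
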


\begin{proof}
The case $|G|=1$ is immediate. 
In the other cases, $G$ has two irreducible representations over $\mathbb R$: the trivial representation, and a real representation that underlies complex multiplication by $\zeta := e^{2\pi i/r}$. 
When $|G|=2$, this nontrivial representation is the $1$-dimensional sign representation, while when $|G|=3$, it is the $2$-dimensional representation in which a generator acts as an order-$3$ rotation.
Then every $V\in \operatorname{Rep}(G)$ decomposes as $V=V_1\oplus V_\zeta$, where $V_1$ is the trivial component and $V_\zeta$ is a direct sum of copies of the nontrivial irreducible representation. 
Since $G$ fixes $V_1$, Lemma~39 in~\cite{CahillIM:24} (see \cref{prop.max-product}) gives
\[c_2(V/G) 
= \max\{c_2(V_1),c_2(V_\zeta/G)\} 
= c_2(V_\zeta/G).
\]
Meanwhile, Corollary~36 in~\cite{CahillIM:24} (see \cref{prop.known-distortions}) gives $c_2(V_\zeta/G) = \sqrt 2$ in the case $|G|=2$, whereas \cref{thm.root unity} in \cref{sec.applications} implies $c_2(V_\zeta/G) = 3/2$ in the case $|G|=3$.
\end{proof}




Next, we observe that contortion interacts nicely with normal subgroups and quotient groups.
\begin{theorem}\label{thm.upsilon subnormal}
Let $G$ be a finite group with normal subgroup $N$. 
Then
\[
\Upsilon(G/N)\leq \Upsilon(G)\leq \Upsilon(G/N)\cdot \Upsilon(N).
\]
Consequently, if $G$ admits a subnormal series
$\{1\}
=G_0
\lhd G_1
\lhd \dots 
\lhd G_r 
= G$,
then
\[
\Upsilon(G)
\leq \prod_{i=0}^{r-1} \Upsilon(G_{i+1}/G_i).
\]
\end{theorem}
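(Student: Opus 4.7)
The plan is to prove the double inequality first, then deduce the subnormal series bound by a short induction.

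For the lower bound $\Upsilon(G/N)\leq \Upsilon(G)$, I would take any finite-dimensional orthogonal $(G/N)$-representation $W$ and inflate it to a $G$-representation $V$ via the quotient map $G\twoheadrightarrow G/N$. Since $N$ acts trivially on $V$, the orbit space $V/G$ coincides with $W/(G/N)$ as a metric space, so $c_2(W/(G/N)) = c_2(V/G) \leq \Upsilon(G)$. Taking the supremum over all such $W$ yields $\Upsilon(G/N)\leq \Upsilon(G)$.

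For the upper bound $\Upsilon(G)\leq \Upsilon(G/N)\cdot\Upsilon(N)$, the key observation is that since $N$ is normal in $G$, the quotient group $G/N$ acts isometrically on the metric space $V/N$, and the induced orbit space $(V/N)/(G/N)$ is naturally isometric to $V/G$. Given any finite-dimensional orthogonal $G$-representation $V$, I would apply \cref{thm.upsilon-bound} twice. First, applying it to the action of $N$ on $V$ and using $c_2(V)=1$ (as $V$ is itself a Hilbert space), we get $c_2(V/N)\leq \Upsilon(N)$. Second, applying it to the action of $G/N$ on the metric space $V/N$, we obtain
\[
c_2(V/G) = c_2\bigl((V/N)/(G/N)\bigr) \leq \Upsilon(G/N)\cdot c_2(V/N) \leq \Upsilon(G/N)\cdot \Upsilon(N).
\]
Taking the supremum over $V\in \operatorname{Rep}(G)$ gives the claimed inequality.

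For the consequence about subnormal series, I would proceed by induction on $r$. The base case $r=0$ is trivial since $\Upsilon(\{1\})=1$. For the inductive step, apply the upper bound just established to the normal subgroup $G_{r-1}\lhd G_r = G$:
\[
\Upsilon(G) \leq \Upsilon(G/G_{r-1})\cdot \Upsilon(G_{r-1}),
\]
and then apply the inductive hypothesis to $G_{r-1}$ with its inherited subnormal series $\{1\}=G_0\lhd\cdots\lhd G_{r-1}$.

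I do not anticipate a genuine obstacle here; the only subtlety worth stating carefully is the identification $(V/N)/(G/N)\cong V/G$ as metric spaces (not merely as sets), which follows because the $N$-action is by isometries and $G/N$ acts on $V/N$ by the induced isometries, so the quotient pseudometric is computed by the same infimum over $G$-orbits in either order.
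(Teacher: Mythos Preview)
Your proposal is correct and follows essentially the same approach as the paper: inflate $(G/N)$-representations through the quotient map for the lower bound, use the isometric identification $(V/N)/(G/N)\cong V/G$ together with \cref{thm.upsilon-bound} for the upper bound, and induct on the length of the subnormal series for the consequence. The only cosmetic difference is that you invoke \cref{thm.upsilon-bound} for the inequality $c_2(V/N)\leq\Upsilon(N)$, whereas the paper reads this off directly from the definition of $\Upsilon(N)$; both are fine.
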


\begin{proof}
Consider the quotient map $\pi\colon G\to G/N$. Since every orthogonal representation $\rho$ of $G/N$ determines an orthogonal representation $\rho\circ\pi$ of $G$, the first inequality follows.

We now focus on the second inequality. Fix $V\in \operatorname{Rep}(G)$, and note that since $N$ is normal in $G$, the action of $G$ on $V$ descends to an isometric action of $G/N$ on $V/N$, and $(V/N)/(G/N)$ is isometric to $V/G$. 
Hence, \cref{thm.upsilon-bound} gives
\[
c_2(V/G) 
= c_2\big((V/N)/(G/N)\big) 
\leq \Upsilon(G/N)\cdot c_2(V/N) 
\leq \Upsilon(G/N)\cdot\Upsilon(N).
\]
The rest of the lemma follows from induction on the length of a subnormal series for $G$. To elaborate, the base case of $r=0$ is trivial since $\Upsilon(\{1\})= 1$ and the empty product has value one. Next, we use right hand inequality in the above result to get $\Upsilon(G_r)\leq \Upsilon(G_r/G_{r-1})\cdot \Upsilon(G_{r-1})$, and we apply the induction hypothesis on $G= G_{r-1}$.
\end{proof}

As an example application, one may combine Burnside's $p^aq^b$ theorem~\cite{Burnside:04} with \cref{lem.upsilon compute,thm.upsilon subnormal} to obtain a contortion bound for any group $G$ of order $2^a3^b$:
\[
\Upsilon(G)
\leq\big(\sqrt{2}\big)^a \big(\tfrac{3}{2}\big)^b
\leq\sqrt{|G|}.
\]
Notably, this implies that both groups of order~$4$ have contortion at most~$2$.
These are the smallest groups for which we do not have exact contortions.
Considering $\Upsilon(C_4)$ is at least the distortion of $\mathbb{R}^2$ modulo the order-$4$ rotation group, Corollary~38 in~\cite{CahillIM:24} (see \cref{prop.known-distortions}) gives
\[
\textstyle 2\sqrt{2-\sqrt{2}}
\leq\Upsilon(C_4)
\leq 2.
\]
Meanwhile, we get
\[
\sqrt{2}
\leq \Upsilon(C_2\times C_2)
\leq 2
\]
as a consequence of \cref{lem.upsilon compute,thm.upsilon subnormal}, applied to the normal subgroup $C_2 \lhd (C_2\times C_2)$.

In addition to finding exact contortions of small groups, it would be interesting to better understand the contortions of large groups.
As a consequence of results in \cref{sec.wednesday theorem}, there are groups with arbitrarily large contortion.
Specifically, $\Upsilon(S_n)\to \infty$ as $n\to \infty$.
One can still ask how contortion scales with the size of the group.
For instance, what is the smallest $\alpha$ such that for every $\varepsilon>0$, there exists a constant $c_\varepsilon$ such that $\Upsilon(G)\leq c_\varepsilon|G|^{\alpha+\varepsilon}$ for every finite group $G$?
(The bound~\eqref{eq.max filter bank bound} implies $\alpha\leq\frac{5}{2}$.)
Also, for each $n$, which groups exhibit the maximum contortion among all groups of order at most $n$?

\subsection{Quotient--orbit embeddings}
\label{sec.reflection like}

Let $X$ be a metric space equipped with an isometric action by a compact group $G$. Intuitively, a bilipschitz embedding of the quotient space $\phi\colon X/G\to\mathbb{R}^m$ can serve as a stepping stone towards a bilipschitz embedding of $X$ itself.
Since $\phi$ already bilipschitzly separates the $G$-orbits of $X$, if we find another map $\psi\colon X\to\mathbb{R}^n$ that bilipschitzly separates points within each $G$-orbit, then we might expect the map $x\mapsto(\phi([x]),\psi(x))$ to bilipschitzly embed $X$ into $\mathbb{R}^{m+n}$.
In this section, we show that this actually works under suitable conditions on $\psi$, and we call the resulting map $X\to\mathbb{R}^{m+n}$ a \textbf{quotient--orbit embedding}.
Throughout \cref{sec.applications}, we use this approach to find bilipschitz embeddings of $X:=V/N$ with $N\leq\operatorname{Isom}(V)$ in cases where we already know how to bilipschitzly embed $V/K$ for some intermediate group $N\unlhd K\leq\operatorname{Isom}(V)$.
(Notably, if we put $G:=K/N$, then $V/K=X/G$.)

In what follows, we first present general conditions on $\psi$ that allow us to quantitatively control the bilipschitz bounds of $x\mapsto(\phi([x]),\psi(x))$.
Our bounds are stronger when $|G|=2$, which is our most common use case in \cref{sec.applications}.
As an important instance of this case, we show in \cref{corr.glued space} that
\[
c_2(X)^2
\leq 2\cdot c_2(Y)^2 +2
\]
when $X$ is the metric space formed by gluing two copies of $Y$ along a closed subset of $Y$ (and $G$ acts by swapping these copies of $Y\cong X/G$ in $X$).

\subsubsection{Main result}
To obtain a bilipschitz quotient--orbit embedding $x\mapsto(\phi([x]),\psi(x))$ as described above, it suffices for $\psi$ to satisfy the following technical conditions.
\begin{definition}
\label{def.align preserve}
Let $X$ be a metric space equipped with an isometric action by a compact group $G$, and let $\psi\colon X\to H$ be a map into a Hilbert space carrying a unitary action by $G$.
\begin{itemize}
\item[(a)]
We say $\psi$ is \textbf{orbit expanding} if
for every $x\in X$ and $g\in G$, it holds that
\[
\|\psi(x) - g\cdot \psi(x)\|
\geq d_X(x,gx).
\]
\item[(b)]
We say $\psi$ is \textbf{alignment preserving} if for every $x,y\in X$, there exists $g\in G$ that simultaneously minimizes $d_X(x,gy)$ and $\| \psi(x) - g\cdot \psi(y)\|$.
\end{itemize}
\end{definition}

While our theory does not require it, the above properties are perhaps most intuitive when $\psi$ is $G$-equivariant.
(This is the case in all our applications in \cref{sec.applications}.)
As a simple example, if $X=H$ is a Hilbert space and $G\leq \U(H)$ is compact and fixes a closed subspace $S\leq H$, then the orthogonal projection $\psi$ onto $S^\perp$ is orbit expanding and alignment preserving (and $G$-equivariant).

We now state the main result of this section:
\begin{theorem}
    \label{thm.align preserve}
    Let $X,G,H,\psi$ be as in \cref{def.align preserve}, with $\psi$ being $\gamma$-Lipschitz in addition to orbit expanding and alignment preserving. Let $\phi\colon X/G\to H'$ be a map into a Hilbert space with optimal bilipschitz bounds $\alpha_\phi,\beta_\phi\in[0,\infty]$, and define
    \[c:=\begin{cases}
        1 &\text{if $G=\langle \sigma\rangle$ with $\sigma v=-v$ for $v\in H$ and $\dim(H) = 1$,}\\
        \sqrt{2} &\text{if $G=\langle \sigma\rangle$ with $\sigma v=-v$ for $v\in H$ and $\dim(H) > 1$,}\\
        2 &\text{otherwise.}
    \end{cases}\]
    Then the map $\Psi\colon X\to H'\times H$ defined by $\Psi(x) = \big(\,\phi([x]),\ c\cdot \alpha_\phi\cdot \psi(x)\,\big)$ satisfies 
\[\kappa(\Psi)\leq\sqrt{2}\cdot\sqrt{\kappa(\phi)^2 + (c\gamma)^2}.\]
In particular, we have $c_2(X)\leq\sqrt{2}\cdot\sqrt{c_2(X/G)^2 + (c\gamma)^2}$.
\end{theorem}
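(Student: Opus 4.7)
My plan is to bound the upper and lower Lipschitz constants of $\Psi$ separately and then combine them. For the upper bound, I would expand
\[
\|\Psi(x) - \Psi(y)\|^2 = \|\phi([x]) - \phi([y])\|^2 + c^2 \alpha_\phi^2 \|\psi(x) - \psi(y)\|^2,
\]
and apply $\|\phi([x])-\phi([y])\| \leq \beta_\phi\, d_{X/G}([x],[y]) \leq \beta_\phi\, d_X(x,y)$ together with the $\gamma$-Lipschitz bound on $\psi$. This yields an upper Lipschitz constant at most $\alpha_\phi\sqrt{\kappa(\phi)^2 + c^2\gamma^2}$.

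The real work is the lower bound, which I would reduce to establishing the scalar inequality
\[
\alpha_\phi\, d_X(x,y) \;\leq\; \|\phi([x])-\phi([y])\| + c\alpha_\phi \|\psi(x)-\psi(y)\|. \qquad (\dagger)
\]
Once $(\dagger)$ holds, Cauchy--Schwarz in the form $(u+v)^2\leq 2(u^2+v^2)$ gives $(\alpha_\phi d_X(x,y))^2 \leq 2\|\Psi(x)-\Psi(y)\|^2$, so the lower Lipschitz constant is at least $\alpha_\phi/\sqrt 2$, and dividing the upper bound by this yields $\kappa(\Psi)\leq \sqrt{2}\sqrt{\kappa(\phi)^2+(c\gamma)^2}$.

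To prove $(\dagger)$ in the generic case $c=2$, I would pick $g^*\in G$ from the alignment-preserving property and chain the triangle inequality in $X$, orbit expansion on $y$, and the triangle inequality in $H$ followed by the alignment bound $\|\psi(x)-g^*\psi(y)\|\leq\|\psi(x)-\psi(y)\|$:
\[
d_X(x,y) \leq d_X(x,g^*y) + d_X(g^*y,y) \leq d_{X/G}([x],[y]) + \|\psi(y)-g^*\psi(y)\| \leq d_{X/G}([x],[y]) + 2\|\psi(x)-\psi(y)\|.
\]
Multiplying by $\alpha_\phi$ and using $\alpha_\phi\, d_{X/G}([x],[y])\leq \|\phi([x])-\phi([y])\|$ closes this case.

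The hard part is sharpening the constant when $G=\langle\sigma\rangle$ acts on $H$ by $v\mapsto -v$. If $g^*=e$, then $d_X(x,y)=d_{X/G}([x],[y])$ and $(\dagger)$ is immediate. If $g^*=\sigma$, the key is a \emph{dual-triangle trick}: since $\sigma$ acts isometrically on $X$, we have $d_X(\sigma x,y) = d_X(x,\sigma y) = d_{X/G}([x],[y])$, so triangulating through $\sigma x$ and through $\sigma y$ in turn, with orbit expansion, produces both
\[
d_X(x,y)\leq 2\|\psi(x)\| + d_{X/G}([x],[y]) \qquad\text{and}\qquad d_X(x,y)\leq 2\|\psi(y)\| + d_{X/G}([x],[y]).
\]
Averaging yields $d_X(x,y)\leq d_{X/G}([x],[y]) + \|\psi(x)\| + \|\psi(y)\|$. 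Alignment in this sub-case also forces $\|\psi(x)+\psi(y)\|\leq \|\psi(x)-\psi(y)\|$, hence $\langle \psi(x),\psi(y)\rangle\leq 0$, from which $\|\psi(x)-\psi(y)\|^2 \geq \|\psi(x)\|^2 + \|\psi(y)\|^2 \geq \tfrac12(\|\psi(x)\|+\|\psi(y)\|)^2$. Thus $\|\psi(x)\|+\|\psi(y)\|\leq \sqrt{2}\,\|\psi(x)-\psi(y)\|$, giving $(\dagger)$ with $c=\sqrt 2$. When $\dim H=1$, the same negative correlation strengthens to the equality $|\psi(x)-\psi(y)|=|\psi(x)|+|\psi(y)|$, yielding $c=1$. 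The main obstacle is spotting this dual-triangle argument; once it is in place, the rest is mechanical, and the ``in particular'' statement follows by letting $\kappa(\phi)\to c_2(X/G)$.
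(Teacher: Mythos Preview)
Your proof is correct and follows essentially the same route as the paper's: both arguments establish an additive inequality of the form $d_X(x,y)\le d_{X/G}([x],[y])+c\,\|\psi(x)-\psi(y)\|$ via triangle inequality, orbit expansion, and alignment, then square and apply $(u+v)^2\le 2(u^2+v^2)$, with the sharper constants in the $\sigma v=-v$ case coming from the negative correlation $\langle\psi(x),\psi(y)\rangle\le 0$. The only cosmetic difference is that the paper assumes WLOG $\|\psi(x)\|\le\|\psi(y)\|$ and bounds $2\|\psi(x)\|$, whereas you average the two symmetric triangle inequalities to bound $\|\psi(x)\|+\|\psi(y)\|$; these are equivalent bookkeeping choices yielding the same constants.
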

\begin{proof}
The bound is trivial if $\kappa(\phi)=\infty$ or $\gamma = \infty$, so assume $\alpha_\phi,\beta_\phi\in(0,\infty)$ and $\gamma <\infty$. 
It suffices to prove that for all $x,y\in X$,
\[\frac{\alpha_\phi}{\sqrt 2}\cdot d_X(x,y) \leq \big\|\Psi(x)-\Psi(y)\big\|\leq \sqrt{\beta_\phi^2+\left(\alpha_\phi c\gamma\right)^2}\cdot d_X(x,y).\]
The upper bound follows immediately from concatenation and the fact that the orbit map $[\cdot]\colon X\to X/G$ is $1$-Lipschitz.
For the lower bound, fix $x,y\in X$, and assume without loss of generality that $\|\psi(x)\|\leq\|\psi(y)\|$.
Since $\psi$ is alignment preserving, there exists $g\in G$ minimizing both $d_X(x,gy)$ and $\|\psi(x)-g\cdot\psi(y)\|$. 
We claim that
\[
d_X(x,gx)
\leq c \cdot \| \psi(x) - \psi(y) \|.
\]
Assuming the claim, the result follows from the following chain of inequalities:
\[
    \begin{aligned}
    (\alpha_\phi^2/2)\cdot d_X(x,y)^2 &= (\alpha_\phi^2/2)\cdot d_X(gx,gy)^2&& \text{($g$ is isometric)}\\
    &\leq (\alpha_\phi^2/2) \cdot\big(d_X(x,gy)+d_X(x,gx)\big)^2 && \text{(triangle inequality)}\\
    &\leq \alpha^2_\phi\cdot d_{X/G}([x],[y])^2 + \alpha^2_\phi\cdot d_X(x,gx)^2 &&\text{(AM--QM inequality)}\\
    &\leq \|\phi([x])-\phi([y])\|^2 + (c\cdot\alpha_\phi)^2\cdot \|\psi(x)-\psi(y)\|^2 && \text{(def.\ of $\alpha_\phi$; claim)}\\
    &= \|\Psi(x)-\Psi(y)\|^2 && \text{(def.\ of $\Psi$).}
    \end{aligned}
    \]
It remains to prove the claim. 
In general, we have
\[
\begin{aligned}
    d_X(x,gx)
&\leq \| \psi(x) - g\cdot \psi(x) \| &&\text{($\psi$ is orbit expanding)}\\
&\leq \| \psi(x) - g\cdot \psi(y) \| + \| g\cdot \psi(x) - g \cdot \psi(y) \|&& \text{(triangle inequality)}\\
&= \| \psi(x) - g \cdot \psi(y) \| + \| \psi(x) - \psi(y) \| && \text{($g\in \U(H)$)}\\
&\leq 2 \| \psi(x) - \psi(y) \| && \text{(choice of $g$).}
\end{aligned}
\]
Now suppose $G = \langle\sigma\rangle$ with $\sigma v=-v$ for $v\in H$. 
Since $\psi$ is orbit expanding, we get that $\langle \sigma^2\rangle$ acts trivially on $X$, and the $G$-orbit of each $x\in X$ is given by $\{x,\sigma x\}$. 
If $g \in \langle \sigma^2\rangle$, then the claim is immediate, as $d_X(x,gx) = 0 \leq c \|\psi(x)-\psi(y)\|$.
Otherwise, $g\in \sigma\cdot \langle\sigma^2\rangle$, and
\[
\|\psi(x)+\psi(y)\|
=\|\psi(x)-g\cdot \psi(y)\|
\leq\|\psi(x)-\psi(y)\|,
\]
so $\langle \psi(x),\psi(y)\rangle\leq0$.
In the special case $\dim(H)=1$, this further implies that
\[
\langle \psi(x),\psi(y)\rangle
= - \|\psi(x)\|\|\psi(y)\|.
\]
Along with the assumption $\|\psi(x)\|\leq\|\psi(y)\|$, this yields
\begin{align*}
\|\psi(x)-\psi(y)\|^2
&=\|\psi(x)\|^2+\|\psi(y)\|^2-2\langle\psi(x),\psi(y)\rangle\\
&\geq\|\psi(x)\|^2+\|\psi(y)\|^2+2\|\psi(x)\|\|\psi(y)\|\cdot\mathbf{1}_{\{\dim H=1\}}\\
&\geq\left\{\begin{array}{cl}
4\|\psi(x)\|^2 &\text{if }\dim H=1\\
2\|\psi(x)\|^2 &\text{if }\dim H>1
\end{array}\right.\\
&=\frac{4}{c^2}\cdot\|\psi(x)\|^2.
\end{align*}
Combining this with the orbit expanding property, we get
\[
d_X(x,gx)
=d_X(x,\sigma x)
\leq \|\psi(x)-\sigma\cdot\psi(x)\|
=2\|\psi(x)\|
\leq c\|\psi(x)-\psi(y)\|.
\tag*{\qedhere}
\]
\end{proof}

\subsubsection{Application to glued spaces}
\label{sec.pushout}

\begin{figure}
    \centering
    \includegraphics[width=0.25\linewidth]{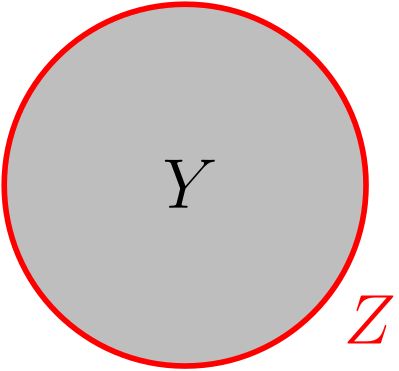}%
    \qquad\qquad%
    \includegraphics[width=.4\linewidth]{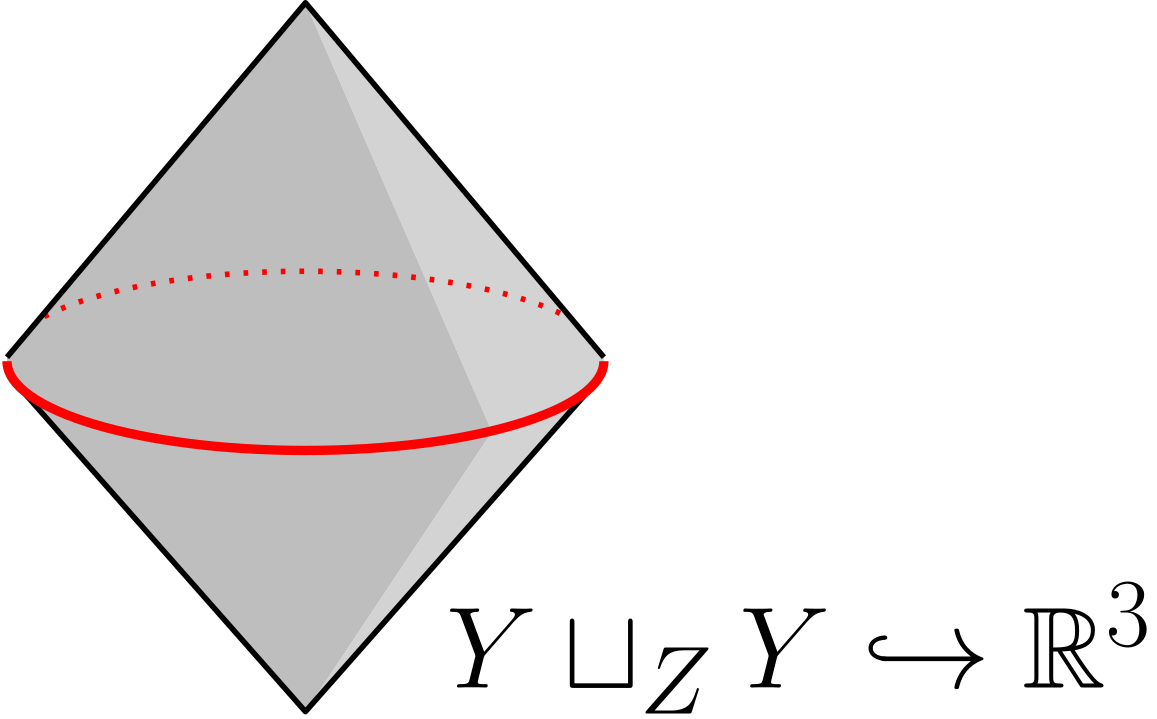}
    \caption{An illustration of a glued space and a corresponding embedding of the type described in Section~\ref{sec.pushout}. \textbf{(left)} We take $Y$ to be the unit disk with the usual metric, and we take $Z$ to be its boundary. \textbf{(right)} The resulting glued space $Y\sqcup_Z Y$ is topologically a sphere, though the metric agrees with the flat metric on $Y$ when restricted to the northern or southern hemispheres. This figure depicts $Y\sqcup_Z Y$ embedded into 3-dimensional space according to the embedding described in \cref{corr.glued space}. That is, for each embedded point, the horizontal coordinates are given by the usual embedding of the unit disk into $\mathbb R^2$, while the vertical coordinate is proportional to its distance in $X$ from the boundary $Z$.}
    \label{fig:pushout}
\end{figure}

Let $(Y,d_Y)$ be a metric space, and let $Z\subseteq Y$ be closed. 
One can construct a new metric space by gluing two copies of $Y$ along $Z$. 
Formally, define $Y\sqcup_Z Y := (Y\times \{\pm 1\})/{\sim_Z}$, where $\sim_Z$ identifies $(z,1)$ with $(z,-1)$ for each $z\in Z$. 
Equip this space with the metric
\begin{align*}
    d_{Y\sqcup_Z Y}\big((y,\delta),(y',\delta')\big) = \left\{\begin{array}{cl}
        d_Y(y,y') & \text{if }\delta = \delta'\\
        \displaystyle\inf_{z\in Z} \big(d_Y(y,z)+d_Y(z,y')\big) & \text{if }\delta = -\delta'.
    \end{array}\right.
\end{align*}
The resulting space $(Y\sqcup_Z Y,d_{Y\sqcup_Z Y})$, true to our notation, is the pushout
\[
\begin{tikzcd}
    Z \ar[r,hook] \ar[d,hook] & Y \ar[d,dashed]\\
    Y \ar[r,dashed] & Y\sqcup_Z Y
\end{tikzcd}
\]
in the category of metric spaces with 1-Lipschitz morphisms. 
In what follows, we present a bilipschitz embedding of all such glued spaces, and we apply \cref{lem.upsilon compute,thm.align preserve} to estimate their Euclidean distortions. 
See Figure~\ref{fig:pushout} for an illustration of a glued space and the proposed embedding.

\begin{corollary}
\label{corr.glued space}
Let $Y$, $Z$, and $X:=Y\sqcup_Z Y$ be as above, let $\phi\colon Y\to H'$ be a map into a Hilbert space, and define $\Psi\colon X\to H'\times \mathbb R$ by
\[
\Psi(y,\delta) 
:= \big(\,\phi(y),\,\delta\inf_{z\in Z}d_Y(y,z)\,\big).
\]
Then $\kappa(\Psi) 
\leq \sqrt 2 \cdot\sqrt{\kappa(\phi)^2 + 1}$.
Moreover, we obtain the two-sided bound:
\[
c_2(Y)
\leq c_2(X)
\leq \sqrt{2}\cdot\sqrt{c_2(Y)^2 + 1}.\]
\end{corollary}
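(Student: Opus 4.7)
The plan is to invoke \cref{thm.align preserve} with the natural $C_2$-action on $X=Y\sqcup_Z Y$ that swaps the two copies. Setting $G=\langle\sigma\rangle$ with $\sigma(y,\delta):=(y,-\delta)$, the orbit space $X/G$ is isometric to $Y$ via $(y,\delta)\mapsto y$, so the input map $\phi\colon Y\to H'$ of the corollary plays the role of the theorem's embedding of $X/G$. I take $H=\mathbb{R}$ with $\sigma$ acting by negation, placing us in the case $c=1$ of the theorem. As the orbit-expanding map I use $\psi\colon X\to\mathbb{R}$ defined by $\psi(y,\delta):=\delta\cdot\inf_{z\in Z}d_Y(y,z)$; this is $G$-equivariant, well-defined on the glued locus (where it vanishes), and has Lipschitz constant $\gamma=1$, verified separately on same-copy pairs (using that $y\mapsto\inf_{z\in Z}d_Y(y,z)$ is $1$-Lipschitz) and on opposite-copy pairs (using the triangle inequality combined with choosing separate optimal $z$'s in the two infima).

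The remaining verifications are orbit expansion and alignment preservation. Orbit expansion holds with equality: $\|\psi(x)-\sigma\psi(x)\|=2\inf_{z\in Z}d_Y(y,z)=d_X(x,\sigma x)$. For alignment preservation I argue by cases on the $\delta$-signs of the two input points. When the signs agree, both $d_X$ and $\|\psi(x)-g\cdot\psi(y')\|$ are minimized by $g=e$, because any detour through $Z$ strictly enlarges $d_X$, and the two $\psi$-values already have matching signs so their distance is a difference rather than a sum. When the signs differ, both quantities are minimized by $g=\sigma$: applying $\sigma$ returns the two points to the same copy, yielding $d_X=d_Y(y,y')\leq\inf_{z\in Z}(d_Y(y,z)+d_Y(z,y'))$ by triangle inequality, and simultaneously turning the sum $\psi_y+\psi_{y'}$ (at $g=e$) into the no-larger difference $|\psi_y-\psi_{y'}|$ (at $g=\sigma$). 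This cross-copy case is the main subtlety, since alignment preservation requires the \emph{same} $g$ to minimize both quantities, so one must confirm that $\sigma$ wins both optimizations.

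With these hypotheses in hand, \cref{thm.align preserve} yields $\kappa(\Psi)\leq\sqrt{2}\cdot\sqrt{\kappa(\phi)^2+1}$ for the map $\Psi(y,\delta)=(\phi(y),\alpha_\phi\delta\inf_{z\in Z}d_Y(y,z))$; one recovers the form stated in the corollary after normalizing $\phi$ so that $\alpha_\phi=1$, which preserves $\kappa(\phi)$. The upper bound in the two-sided inequality then follows by choosing $\phi$ with $\kappa(\phi)$ approaching $c_2(Y)$ and passing to the infimum. The lower bound $c_2(Y)\leq c_2(X)$ is immediate from the isometric inclusion $Y\hookrightarrow X$ given by $y\mapsto(y,1)$, since any embedding of $X$ restricts to an embedding of $Y$ with no larger distortion.
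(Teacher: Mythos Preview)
Your proposal is correct and follows essentially the same route as the paper: introduce the $C_2$-action swapping the two sheets, identify $X/G\cong Y$, take $\psi(y,\delta)=\delta\,d_Y(y,Z)$ with $\sigma$ acting by negation on $\mathbb{R}$, verify that $\psi$ is $1$-Lipschitz, orbit expanding, and alignment preserving, and then invoke \cref{thm.align preserve} with $c=\gamma=1$. Your observation that one must normalize $\phi$ to $\alpha_\phi=1$ in order to match the corollary's stated $\Psi$ (rather than the theorem's $\Psi$, which carries an $\alpha_\phi$ factor) is a point the paper's own proof glosses over.
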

\begin{proof}
The lower bound on $c_2(X)$ follows from observing that $Y$ is a submetric space of $X$.

It remains to prove our upper bound on $\kappa(\Psi)$. Consider the group $G:=\{\operatorname{id},\sigma\}$ with $\sigma$ acting isometrically on $X$ by 
\[\sigma(y,\delta):=(y,-\delta).\]
We first claim that $X/G$ is isomorphic to $Y$ as a metric space.
Indeed, for every $y,y'\in Y$ and $\delta\in\{\pm1\}$, the triangle inequality implies
\[
d_X\big((y,\delta),(y',\delta)\big)
=d_Y(y,y')
\leq \inf_{z\in Z} \big(d_Y(y,z)+d_Y(z,y')\big)
= d_X\big((y,\delta),(y',-\delta)\big),
\]
and so $d_{X/G}([(y,\delta)],[(y',\delta')])=d_Y(y,y')$.
Letting $\sigma$ act on $\mathbb{R}$ by $\sigma v=-v$, then by \cref{thm.align preserve}, it suffices to verify that the map $\psi\colon X\to \mathbb{R}$ defined by
\[
\psi(y,\delta)
:=\delta\inf_{z\in Z}d_Y(y,z)
\]
is $1$-Lipschitz, orbit expanding, and alignment preserving.
In what follows, we write $d_Z(y):=\inf_{z\in Z}d(y,z)$ for convenience.

For $1$-Lipschitz, we aim to show that
\[
|\delta d_Z(y)-\delta' d_Z(y')|
\leq d_X\big((y,\delta),(y',\delta')\big).
\]
On account of the piecewise definition of $d_X$, we proceed in cases.
In the case where $\delta=\delta'$, our task is to show
\begin{equation}
\label{eq.case 1 of lipschitz}
|d_Z(y)-d_Z(y')|
\leq d_Y(y,y').
\end{equation}
Without loss of generality, assume $d_Z(y)\geq d_Z(y')$.
Given any $\varepsilon>0$, we may select $z\in Z$ in such a way that $d_Y(y',z)\leq d_Z(y')+\varepsilon$.
Thus, the triangle inequality gives
\[
|d_Z(y)-d_Z(y')|
=d_Z(y)-d_Z(y')
\leq d_Y(y,z)-d_Y(y',z)+\varepsilon
\leq d_Y(y,y')+\varepsilon.
\]
Since $\varepsilon>0$ was arbitrary, the validity of~\eqref{eq.case 1 of lipschitz} follows.
In the remaining case where $\delta=-\delta'$, our task is to show
\[
|d_Z(y)+d_Z(y')|
\leq \inf_{z\in Z} \big(d_Y(y,z)+d_Y(z,y')\big).
\]
Considering $d_Z(y)+d_Z(y')\geq0$, this is equivalent to the bound
\[
\inf_{z\in Z}d_Y(y,z)+\inf_{z\in Z}d_Y(y',z)
\leq\inf_{z\in Z} \big(d_Y(y,z)+d_Y(y',z)\big),
\]
which immediately follows from the definition of infimum.

For orbit expanding, take any $x:=(y,\delta)\in X$ and $g\in G$.
If $g=\operatorname{id}$, we trivially have $\|\psi(x)-g\cdot\psi(x)\|\geq 0=d_X(x,gx)$.
Otherwise, $g=\sigma$, in which case
\[
\|\psi(x)-g\cdot\psi(x)\|
=2\|\psi(x)\|
=2d_Z(y).
\]
Given any $\varepsilon>0$, we may select $z\in Z$ in such a way that $d_Y(y,z)\leq d_Z(y)+\frac{\varepsilon}{2}$, in which case
\[
\|\psi(x)-g\cdot\psi(x)\|
=2d_Z(y)
\geq 2d_Y(y,z)-\varepsilon
\geq d_X\big((y,\delta),(y,-\delta)\big)-\varepsilon
= d_X(x,gx)-\varepsilon.
\]
Since $\varepsilon>0$ was arbirary, it follows that $\|\psi(x)-g\cdot\psi(x)\|\geq d_X(x,gx)$.

Finally, for alignment preserving, take any $x:=(y,\delta)$ and $x':=(y',\delta')$ in $X$, and for convenience, denote 
\[
A:=\argmin_{g\in G} d_X(x,gx'),
\qquad
B:=\argmin_{g\in G} \|\psi(x)-g\cdot\psi(x')\|.
\]
As a consequence of the triangle inequality, $\delta=\delta'$ implies $\operatorname{id}\in A$ and $\delta=-\delta'$ implies $\sigma\in A$.
Additionally, $\delta=\delta'$ implies that $\psi(x)$ and $\psi(x')$ have the same sign (or at least one is zero), and so $\operatorname{id}\in B$.
On the other hand, $\delta\neq \delta'$ implies that they have opposite sign (or at least one is zero), and so $\sigma\in B$.
Either way, there is a member of $G$ that simultaneously minimizes $d_X(x,gx')$ and $\|\psi(x)-g\cdot\psi(x')\|$.
\end{proof}



\subsection{Local-to-global distortion bounds}
\label{sec.isotropy}

Recall that we can lower bound $c_2(X)$ by passing to a sub-metric space $Y$ of $X$:
\[
c_2(X)
\geq c_2(Y).
\]
In this section, we identify conditions under which we can instead pass to a metric space $\tilde{Y}$ that in some sense linearly approximates a small neighborhood $Y$ of a point in $X$.
In service to our applications in \cref{sec.applications}, we focus on a setting in which $X$ is the quotient of a Riemannian manifold by a suitable isometric action.

Let $M$ be a Riemannian manifold equipped with distance function $d\colon M\times M\to \mathbb R_{\geq 0}$, which induces the given topology on $M$ and restricts to the geodesic distance on each connected component; for its existence, see, e.g.,~\cite[Prob.~2.30]{Lee:18}. 
Suppose that, as a metric space, $(M,d)$ is equipped with a \textit{wandering} isometric action by a discrete group $G$. 
That is, for each $p\in M$, there exists an open neighborhood $U\subseteq M$ such that the set
\[
\{g\in G:g\cdot U\cap U
\neq \varnothing\}
\]
is finite. 
Then all $G$-orbits are closed, and the stabilizer subgroup 
\[
G_p
:= \{g\in G:g\cdot p=p\}
\]
is finite \cite[Lem.~13]{Kapovich:24}.
On each connected component of $M$, every element $g\in G$ restricts to a Riemannian isometry between connected components; see, for example,~\cite[Prob.~6.7]{Lee:18}. 
The differential
\[
d_p\colon G_p\to \Oname(T_pM),
\quad 
g\mapsto d_pg
\]
then defines a representation of $G_p$ on $T_pM$, known as the \textit{isotropy representation} at $p$.

\begin{figure}
    \centering
    \includegraphics[width=0.8\linewidth]{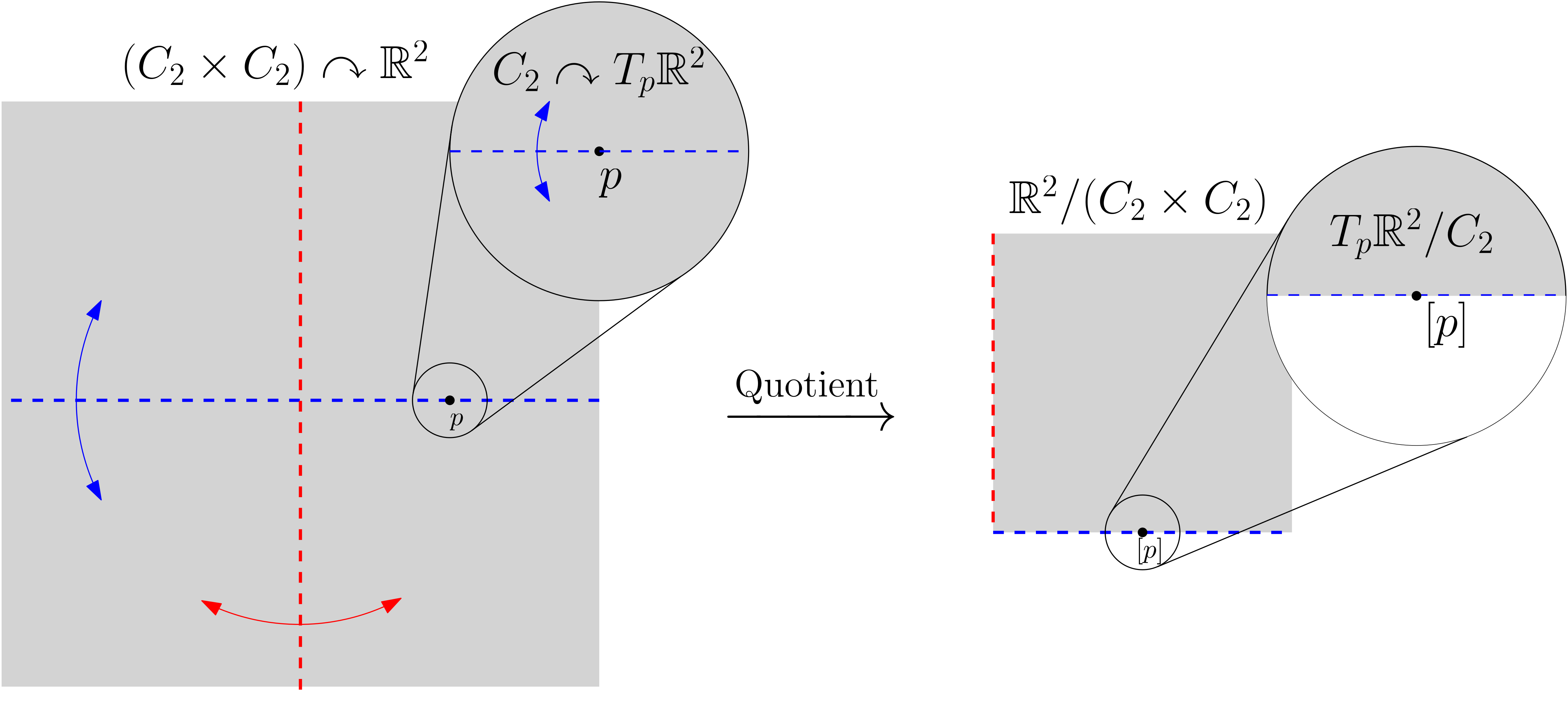}
    \caption{A depiction of the tangent space analysis described in Section~\ref{sec.isotropy}. \textbf{(left)} We consider the action of $C_2\times C_2$ on $\mathbb R^2$ generated by a pair of reflections. The axes of reflection are shown with dashed red and blue lines. Selecting a point $p$ on one of these axes, we note the stabilizer of $p$ is $C_2$. The image depicts the induced isotropy representation of $C_2$ on the tangent space $T_p\mathbb R^2$. \textbf{(right)} The quotient space $\mathbb R^2/(C_2\times C_2)$. The boundaries arising from each reflection are shown in dashed red and blue lines. We see that an infinitesimal neighborhood of $[p]$ looks like the quotient $T_p\mathbb R^2/C_2$ of the isotropy representation at $p$.
    \label{fig:isotropy}}
\end{figure}

One often thinks of the tangent space $T_pM$ as an ``infinitesimal neighborhood'' of $p$, and correspondingly, the quotient $T_pM/G_p$ as an ``infinitesimal neighborhood'' of  $[p]\in M/G$.
Figure~\ref{fig:isotropy} shows a simple example when $M=\mathbb R^2$.
While the containment of $T_pM/G_p$ in $M/G$ is purely figurative, the following theorem shows that a distortion inequality holds as if the containment were literal. 
It provides a local-to-global principle for bounding the distortion of the quotient space. 
We postpone the proof to the end of this subsection.

\begin{theorem}
    \label{thm.isotropy}
    Let $M$ and $G$ be as above.
    For each $p\in M$,
    $c_2(T_pM/G_p) \leq c_2(M/G)$.
\end{theorem}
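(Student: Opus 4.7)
The plan is to exhibit, inside $M/G$, arbitrarily faithful rescaled copies of any finite sub-metric space of $T_pM/G_p$, so that the distortion of the latter is controlled by the distortion of the former. As usual, finite determination (Prop.~31 in \cite{CahillIM:24}) reduces the problem to proving $c_2(B)\leq c_2(M/G)$ for every finite $B\subseteq T_pM/G_p$. Fix such a $B$, and choose a finite lift $A\subseteq T_pM$ with $[A]=B$ under the quotient map $T_pM\to T_pM/G_p$.

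The first substantive step is a local rigidity lemma for the quotient metric. Because the $G$-action is wandering, we can choose an open neighborhood $U$ of $p$ so that $\{g\in G:gU\cap U\neq\varnothing\}=G_p$; after shrinking $U$ if necessary, $\exp_p$ is a diffeomorphism from an open ball $B_r(0)\subseteq T_pM$ onto $U$. For all sufficiently small $\epsilon>0$, $\exp_p(\epsilon a)\in U$ for every $a\in A$, and any $g\in G$ minimizing $d_M(\exp_p(\epsilon u),g\cdot\exp_p(\epsilon v))$ must send a point near $p$ to a point near $p$, hence lies in $G_p$. Combined with the standard identity $g\circ\exp_p=\exp_p\circ\, d_pg$ for $g\in G_p$, this yields, for $\epsilon$ small and $u,v\in A$,
\[
d_{M/G}\big([\exp_p(\epsilon u)],[\exp_p(\epsilon v)]\big)
=\min_{g\in G_p}d_M\big(\exp_p(\epsilon u),\exp_p(\epsilon\cdot d_pg(v))\big).
\]

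Next I would use the infinitesimal isometry property of $\exp_p$: for all $u,w\in T_pM$, $d_M(\exp_p(\epsilon u),\exp_p(\epsilon w))=\epsilon\|u-w\|+o(\epsilon)$ as $\epsilon\to 0$ (uniformly over a bounded set of $u,w$, since the relevant Christoffel data are smooth near $p$). Combining this with the displayed identity and dividing by $\epsilon$ gives
\[
\frac{1}{\epsilon}\,d_{M/G}\big([\exp_p(\epsilon u)],[\exp_p(\epsilon v)]\big)\;\longrightarrow\;\min_{g\in G_p}\|u-d_pg(v)\|\;=\;d_{T_pM/G_p}([u],[v]).
\]
Thus the finite metric space $B_\epsilon:=\{[\exp_p(\epsilon a)]:a\in A\}\subseteq M/G$, with its metric rescaled by $1/\epsilon$, converges pointwise to the metric on $B$ as $\epsilon\to 0$.

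Since $c_2$ is scale-invariant and $B_\epsilon$ is a sub-metric space of $M/G$, we have $c_2(\tfrac{1}{\epsilon}B_\epsilon)=c_2(B_\epsilon)\leq c_2(M/G)$ for all small $\epsilon$. To conclude $c_2(B)\leq c_2(M/G)$, I would take a sequence $\epsilon_n\to 0$, pick near-optimal Euclidean embeddings $\phi_n\colon\tfrac{1}{\epsilon_n}B_{\epsilon_n}\to\ell^2$, normalize so each image lies in a finite-dimensional subspace containing a fixed basepoint, and extract a subsequential limit on the fixed finite index set $A$; the resulting map on $B$ has distortion at most $\limsup_n\kappa(\phi_n)\leq c_2(M/G)$. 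The main obstacle is the local rigidity step: carefully exploiting the wandering hypothesis to guarantee that the quotient metric near $[p]$ is computed using only $G_p$-orbit representatives, and that the error term in the exponential map expansion is controlled uniformly across the finite set $A$.
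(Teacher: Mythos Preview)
Your proposal is correct and follows essentially the same approach as the paper: localize near $p$ so that only $G_p$ contributes to the quotient metric (the paper packages this as the isometry $U/G_p\cong GU/G$, you as the statement that minimizers lie in $G_p$), use the $G_p$-equivariance of $\exp_p$, and exploit that $\exp_p$ is infinitesimally an isometry together with finite determination and scaling. The only cosmetic difference is that the paper routes the conclusion through the second equivariant embedding lemma and the bound $\kappa(\exp_p|_B)\to 1$, whereas you take a direct limit of rescaled finite sub-metric spaces and a compactness extraction on embeddings; both executions are standard and yield the same result.
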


In \cref{sec.tran klien}, we apply \cref{thm.isotropy} to obtain lower bounds on the Euclidean distortions of quotients of the plane by various wallpaper groups.
In what follows, we specialize \cref{thm.isotropy} to the special case where $M = W$ is an orthogonal representation of a finite group $G$. This is used in \cref{sec.wednesday theorem} to identify several parameterized families of important quotient spaces whose Euclidean distortions diverge to infinity.

\begin{corollary}
\label{thm.wednesday}
Let $W$ be a finite-dimensional real Hilbert space, and let $G\leq \O(W)$ be a finite group. 
Then for every subspace $V\leq W$, we have
\[
c_2(V^\perp/G_V) 
= c_2(W/G_V)
\leq c_2(W/G),
\]
where $G_V$ denotes the pointwise stabilizer of $V$ in $G$. 
Here, we abuse notation slightly by identifying $G_V\leq \O(V^\perp)$ with its restriction $G_V\leq \O(W)$ since $G_V$ acts trivially on $V$.
\end{corollary}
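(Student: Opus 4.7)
The plan is to prove the equality and the inequality separately. For the equality $c_2(V^\perp/G_V) = c_2(W/G_V)$, I would use the orthogonal $G_V$-stable splitting $W = V \oplus V^\perp$; here $V^\perp$ is $G_V$-stable because $G_V \leq \O(W)$ preserves $V$ pointwise, hence also preserves its orthogonal complement. Since $G_V$ acts trivially on $V$, the quotient $W/G_V$ is canonically isometric to the metric product $V \times (V^\perp/G_V)$, and \cref{prop.max-product} then gives
\[
c_2(W/G_V) = \max\{c_2(V),\, c_2(V^\perp/G_V)\} = c_2(V^\perp/G_V),
\]
using $c_2(V) = 1$ and the universal lower bound $c_2 \geq 1$ for any metric space with more than one point (the edge case $V^\perp/G_V$ being a single point is handled trivially).

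For the inequality $c_2(W/G_V) \leq c_2(W/G)$, I would appeal to \cref{thm.isotropy} applied to the Euclidean manifold $M := W$ together with the given action of $G$, which is wandering because $G$ is finite. The theorem delivers
\[
c_2(T_p W /G_p) \leq c_2(W/G)
\]
for every $p \in W$. Under the canonical identification $T_p W \cong W$, the differential of each $g \in G$ at $p$ is just $g$ itself (as $g$ is linear), so the isotropy representation of $G_p$ at $p$ coincides with the restriction of the ambient linear representation. Hence $T_pW/G_p$ is isometric to $W/G_p$, and the inequality reduces to producing a single point $p$ with $G_p = G_V$.

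Such a $p$ can be found generically inside $V$: for any $p \in V$ the inclusion $G_V \subseteq G_p$ is automatic, while for each $g \in G \setminus G_V$ the fixed subspace $\Fix(g) \cap V$ is a \emph{proper} linear subspace of $V$ (otherwise $g$ would fix $V$ pointwise and hence lie in $G_V$). Since $G \setminus G_V$ is finite and a real vector space is not a finite union of proper subspaces, the set $V \setminus \bigcup_{g \in G \setminus G_V} \Fix(g)$ is nonempty, and any $p$ in it satisfies $G_p = G_V$. Combining the equality and the inequality yields the corollary. I expect the main bit of care to be verifying that the isotropy representation at $p$ matches the ambient linear action under the identification $T_pW \cong W$, and making the genericity argument precise; both are routine and I do not anticipate a deeper obstacle.
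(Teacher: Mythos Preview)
Your proposal is correct and follows essentially the same approach as the paper: both establish the equality via the product decomposition $W/G_V \cong V \times (V^\perp/G_V)$ and \cref{prop.max-product}, and both obtain the inequality by applying \cref{thm.isotropy} at a generic point $p \in V$ chosen to avoid the finitely many proper fixed subspaces $\Fix(g)\cap V$ for $g \in G\setminus G_V$, using that the isotropy representation at $p$ is just the ambient linear action.
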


\begin{proof}
The first equality follows from Lemma~39 in~\cite{CahillIM:24} (see \cref{prop.max-product}) and the fact that $c_2(V)=1$:
\[
c_2(W/G_V) 
= c_2(V\times (V^\perp/G_V)) 
= \max\{c_2(V),c_2(V^\perp/G_V)\}
= c_2(V^\perp/G_V).
\]    
For the inequality $c_2(W/G_V)\leq c_2(W/G)$, the case $G=G_V$ is trivial, so we assume $G\neq G_V$. 
A generic point $p\in V$ satisfies $G_p=G_V$, since for each of the finitely many $g\in G\setminus G_V$, the fixed points in $V$ form a proper subspace of $V$.
To conclude, we apply \cref{thm.isotropy} at $p$, and the inequality follows since $T_pW = W$ and $d_pg = g$ for each $g\in G_p\leq \Oname(W)$.
\end{proof}

We conclude this subsection with a proof of \cref{thm.isotropy}.
\begin{proof}[Proof of \cref{thm.isotropy}]
Fix $p\in M$, and let $B\subseteq T_pM$ be an open ball centered at the origin. 
Define $U:= \exp_p(B)$, where $\exp_p$ denotes the Riemannian exponential map. 
We select $B$ with sufficiently small radius so that
    \begin{itemize}
        \item[] $U$ is geodesically convex,
        \item[] $0<\operatorname{diam}(U)<\frac{1}{3}\cdot\min_{g\in G\setminus G_p}d_M(p,gp)$, and
        \item[] $\exp_p|_B\colon B\to U$ is a diffeomorphism with inverse $\log_p\colon U\to B$.
    \end{itemize}
Since $G$ acts isometrically on $M$, we have the identity 
\[
\exp_p\circ \, d_pg 
= g\circ \exp_p
\]
for each $g\in G_p$. 
Thus, $U$ is $G_p$-invariant and $\exp_p|_B$ is $G_p$-equivariant. 
By the second equivariant embedding lemma (\cref{lem:quotient-distortion}), the induced map $\exp^G_p|_B\colon B/G_p \to U/G_p$ satisfies
\begin{equation}
\label{eq.isotropy inter}
c_2(B/G_p)
\leq \kappa(\exp^G_p|_B)\cdot c_2(U/G_p) 
\leq \kappa(\exp_p|_B)\cdot c_2(U/G_p).
\end{equation}
We claim that
\begin{enumerate}[label=(\roman*)]
    \item $c_2(B/G_p)= c_2(T_pM/G_p)$,
    \item $c_2(U/G_p) = c_2(GU/G)$, and
    \item $\kappa(\exp_p|_B)\to 1$ as the radius of $B$ shrinks to zero.
\end{enumerate}
Considering \eqref{eq.isotropy inter} and the fact that $GU/G$ is a sub-metric space of $M/G$, (i) and (ii) imply
\[c_2(T_pM/G_p) \leq \kappa(\exp_p|_B)\cdot c_2(GU/G) \leq \kappa(\exp_p|_B)\cdot c_2(M/G),\]
and then the theorem follows from (iii) by taking $\kappa(\exp_p|_B) \to 1$.

For (i), we apply Proposition~31 in~\cite{CahillIM:24} (see \cref{prop.finitely-determined}), which gives that $c_2(T_pM/G_p)$ is the supremum of $c_2(G_pF/G_p)$ over all finite $F\subseteq T_pM$. 
Since scaling a metric space does not change its distortion, we have $c_2(G_pF/G_p) = c_2(G_p(rF)/G_p)$ for every $r > 0$. 
Meanwhile, for small enough $r$, we get $G_p(rF)\subseteq B$, and so
\[c_2(G_pF/G_p)\leq c_2(B/G_p) \leq c_2(T_pM/G_p).\]
Taking the supremum over all finite $F\subseteq T_pM$ proves (i).

For (ii), fix $q,q'\in U$.
It suffices to show
\[
d_{U/G_p}(G_p\cdot q,G_p\cdot q') 
= d_{GU/G}(G\cdot q, G\cdot q').
\]
Since the right-hand side minimizes over a larger group, the inequality $\geq$ holds.
Suppose, for the sake of contradiction, that the inequality is strict, meaning there exists $g\in G\setminus G_p$ such that $d_M(gq,q') = d_{M/G}(G\cdot q,G\cdot q')$. Then
\[d_M(gq,q') \leq d_M(q,q')\leq \operatorname{diam}(U).\]
On the other hand, using the triangle inequality, the fact that $g$ is an isometry, the fact that $p,q,q'\in U$, and our assumption on $\operatorname{diam}(U)$, we find that
\begin{align*}
d_M(gq,q')
&\geq d_M(gp,p) - d_M(gp,gq) - d_M(q',p)\\
&= d_M(gp,p) - d_M(p,q) - d_M(q',p)\\
&\geq d_M(gp,p) - 2\operatorname{diam}(U)\\
&> \operatorname{diam}(U),
\end{align*}
a contradiction.

For (iii), fix $\varepsilon> 0$ and choose $\delta > 0$ so that $(1+\delta)^2 < 1+\varepsilon$. 
Since both $\exp_p|_B$ and its inverse $\log_p$ are smooth and satisfy $d_p(\exp_p|_B) = d_p(\log_p) = \Id_{T_pM}$, we can shrink the radius of $B$ so that the operator $2$-norms of the differentials satisfy
\[
\|d_b(\exp_p|_B)\|_2 
< 1+\delta
\qquad \text{and } \qquad
\|d_q(\log_p)\|_2
<1+\delta
\] 
for all $b\in B$ and $q\in U$. 
It follows that the upper Lipschitz bound of $\exp_p|_B$ is at most $1+\delta$, and its lower Lipschitz bound is at least $(1+\delta)^{-1}$, so
\[
\kappa(\exp_p|_B) 
\leq (1+\delta)^2 
< 1+\varepsilon.
\]
Since $\varepsilon>0$ was arbitrary, the claim follows.
\end{proof}

\subsection{Euclidean isometry groups with a subspace of translations}
\label{sec.euclidean tool}

For a finite-dimensional real inner product space $V$, we are interested in bilipschitzly embedding quotients of $V$ by various subgroups of $\E(V)$. 
To date, the vast majority of attention has been given to quotients by subgroups that consist of linear transformations, motivated in part by the assortment of tools available from algebraic invariant theory.
In this section, we relate this special case to a broader class of quotients.

The Euclidean group $\E(V)$ consists of the affine orthogonal transformations of $V$.
By identifying $V$ with its translation group, it follows that
\[
\E(V)
=\O(V)\ltimes V.
\]
In particular, the canonical homomorphism $\pi:\E(V)\rightarrow \O(V)$ has kernel $V$.

Suppose we wish to quotient $V$ by a subgroup $\Gamma\leq\E(V)$.
We will assume that the \textit{point group} $G:=\pi(\Gamma)\leq \O(V)$ is closed so that $\Gamma\leq\E(V)$ is also closed, and hence $V/\Gamma$ is a metric space. 
Notice that the kernel of the restriction of $\pi$ to $\Gamma$ is the \textit{translation subgroup} $T := \Gamma \cap V$, and so $G\cong\Gamma/T$.
It turns out that if $T$ is a linear subspace of $V$, then our quotient $V/\Gamma$ is isometrically isomorphic to a quotient by a linear group:
\[
V/\Gamma
\cong \frac{V/T}{\Gamma/T}
\cong T^\perp/G,
\]
where $T^\perp$ denotes the orthogonal complement of $T$ in $V$.
In \cref{sec.euclidean}, we use this reduction to estimate the Euclidean distortions of $(\mathbb{R}^r)^n/\E(r)$ and $(\mathbb{R}^r)^n/\operatorname{SE}(r)$.

\begin{theorem}\label{thm.euclidean-subgroup}
Let $\Gamma$, $G$, $T$ be as above, with $G\leq \O(V)$ closed and $T$ a subspace of $V$.
Then
\begin{thmenum}
\item $T$ and $T^\perp$ are $G$-stable, and
\item the inclusion $T^\perp \hookrightarrow V$ induces an isometry of  $T^\perp / G$ with $V/\Gamma$.
\end{thmenum}
\end{theorem}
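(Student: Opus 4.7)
The plan is to handle (a) via a direct conjugation computation and (b) via four short checks on the natural map $\iota\colon T^\perp \hookrightarrow V \twoheadrightarrow V/\Gamma$, all of which reduce to the orthogonal decomposition $V = T \oplus T^\perp$. For part (a), I would pick $g \in G$ with a lift $\gamma \in \Gamma$, so $\gamma$ acts on $V$ as $v \mapsto gv + t_\gamma$ for some $t_\gamma \in V$. For any $t \in T$, the translation $\tau_t$ lies in $\Gamma$, and the conjugation computation
\[
\gamma\,\tau_t\,\gamma^{-1}(v) = \gamma\big(g^{-1}(v - t_\gamma) + t\big) = v + gt
\]
shows $\gamma\tau_t\gamma^{-1} = \tau_{gt} \in \Gamma$, forcing $gt \in T$. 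Hence $T$ is $G$-stable, and since $G \leq \O(V)$ preserves the inner product, the $G$-stability of $T^\perp$ follows immediately: for $w \in T^\perp$, $t \in T$, $g \in G$, we have $\langle gw, t\rangle = \langle w, g^{-1}t\rangle = 0$, using $g^{-1}t \in T$.

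For part (b), I would argue that $\iota\colon T^\perp \hookrightarrow V \twoheadrightarrow V/\Gamma$ descends to a bijective isometry $\bar\iota\colon T^\perp/G \to V/\Gamma$. Well-definedness amounts to: for $w \in T^\perp$ and $g \in G \leq \Gamma$, the element $g$ sends $w$ to $gw$, so $\iota(w) = \iota(gw)$. Surjectivity follows from the decomposition $v = v_T + v_{T^\perp}$ along $T \oplus T^\perp$, since $\tau_{v_T} \in T \leq \Gamma$ carries $v_{T^\perp}$ to $v$ and hence $[v] = [v_{T^\perp}]$. For the isometry property (which subsumes injectivity as the zero-distance case), I would write an arbitrary $\gamma \in \Gamma$ as $(g, t)$ with $g \in G$ and $t \in T$, and compute
\[
\|w - \gamma w'\|^2 = \|w - gw' - t\|^2 = \|w - gw'\|^2 + \|t\|^2,
\]
using orthogonality, since $w - gw' \in T^\perp$ (by part (a)) and $t \in T$. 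Minimizing the right-hand side over $t \in T$ (attained at $t = 0$) and then over $g \in G$ yields $d_{V/\Gamma}([w],[w']) = \inf_{g \in G}\|w - gw'\| = d_{T^\perp/G}([w],[w'])$.

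The main obstacle I anticipate is the implicit use in part (b) of the fact that every $g \in G$ is realized inside $\Gamma$ as the purely linear transformation $g$—equivalently, that $\Gamma$ splits as the semidirect product $T \rtimes G$. In the applications pursued in \cref{sec.euclidean}, where $\Gamma = G \ltimes V$ acts diagonally on $V^n$, this splitting is automatic: the lift $(g, 0)$ of $g \in G$ has translation part $0 \in T$. Once the splitting is in hand, each of the four checks in (b) becomes a short orthogonality calculation; without it, the action induced by $\Gamma/T$ on $V/T \cong T^\perp$ is affine rather than linear and would not match the action used to form $T^\perp/G$.
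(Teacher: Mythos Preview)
Your conjugation argument for (a) is correct and is precisely what the paper does. For (b) you have correctly isolated the one nontrivial ingredient that you do not supply: the splitting $\Gamma = T \rtimes G$, equivalently the ability to write every $\gamma \in \Gamma$ as $x \mapsto gx + t$ with $t \in T$ rather than merely $t \in V$. Without it your four checks do break down; for instance, the well-definedness step invokes $G \leq \Gamma$, and the isometry computation $\|w - \gamma w'\|^2 = \|w - gw'\|^2 + \|t\|^2$ needs $t \in T$ in order to use orthogonality. So the general theorem is not yet proved.

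The paper fills exactly this gap, and does so using the compactness of $G$. The induced action of $\Gamma/T \cong G$ on the Euclidean space $V/T$ is by affine isometries; since $G$ is compact, this action has a fixed point (for instance the barycenter of any orbit). Conjugating by the translation that carries this fixed point to the origin converts the affine action into its linear part, which is the given linear action of $G$ on $V/T \cong T^\perp$. Concretely, there is a point $p \in T^\perp$ such that every $\gamma \in \Gamma$ has the form $x \mapsto g(x-p) + p + c$ with $g \in G$ and $c \in T$; after translating coordinates so that $p = 0$, your orthogonality calculation goes through verbatim. One caution worth noting: without this shift the literal composition $T^\perp \hookrightarrow V \to V/\Gamma$ need not factor through $T^\perp/G$. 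For example, take $V = \mathbb{R}^2$, $T = \mathbb{R} \times \{0\}$, and $\Gamma$ generated by $T$ together with $(x,y) \mapsto (x,\,1-y)$; then $(0,0)$ and $(0,1)$ lie in the same $\Gamma$-orbit but not in the same $G$-orbit in $T^\perp$.
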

\begin{proof}
We begin with (a). 
By the commutativity of $T$, the conjugation action of $\Gamma$ on $T$ factors through $\Gamma/T\cong G$. 
An arbitrary $\gamma\in \Gamma$ may be expressed as $tg$ with $t\in V$ and $g\in G$.
It follows from the commutativity of $V$ that the induced conjugation action $G\curvearrowright T$ coincides with the linear action of $G\leq \O(V)$ on $T\subseteq V$. 
Thus, $T$ is $G$-stable.
Since $G\leq\O(V)$, it follows that $T^\perp$ is also $G$-stable, meaning $G$ is a subgroup of $\O(T)\times \O(T^\perp)$.

Next, we prove (b). 
By composing the inclusion $T^\perp \hookrightarrow V$ with the projection $V\to V/T$, we obtain an isometry $T^\perp \rightarrow V/T$ that is $\O(T)\times \O(T^\perp)$-equivariant, and thus $G$-equivariant.
By the second equivariant embedding lemma (\cref{lem:quotient-distortion}), this then induces an isometry of $T^\perp/G$ with $(V/T)/G$.

To relate these metric spaces to $V/\Gamma$, we consider another group action.
Specifically, if we first quotient by the action $T\curvearrowright V$, the action $\Gamma\curvearrowright V$ induces an action $G\curvearrowright V/T$. 
This does not necessarily coincide with $G$'s linear action on $V/T$ above, but we claim that these actions are necessarily conjugate to each other by an isometry. 
This then implies that $V/\Gamma \cong (V/T)/(\Gamma/T)$ is isometric with the metric space $(V/T)/G\cong T^\perp / G$ discussed above.

To prove our claim, note that the action $G\curvearrowright V/T$ induced by $\Gamma\curvearrowright V$ is an action by isometries on the Euclidean space $V/T$.
Meanwhile, since $G\leq \O(V)$ is compact, it has a fixed point.
Given any Euclidean isometry with a fixed point, we may conjugate it by the translation that sends that fixed point to the origin to obtain an orthogonal isometry.
As such, this action $G\curvearrowright V/T$ is conjugate by an isometry to a linear action. 
By writing elements of $\Gamma$ in the form $tg$ with $t\in V$, $g\in G$, this linear action is necessarily the action $G\curvearrowright V/T$ discussed above. 
\end{proof}

In the above proof of (b), we established that there exists $p\in T^\perp$ such that every element of $\Gamma$ has the form $x\mapsto A(x-p)+p+c$, where $A\in G$ and $c\in T$. 
This shows that the homomorphism $\varphi\colon G\to \Gamma$ defined by $\varphi(A)(x) = A(x-p)+p$ splits the short exact sequence $0\to T\to\Gamma\to G\to 1$. 
In \cref{app.split}, we give alternative proofs of this fact using ideas from group cohomology.

As a final remark, note that the subspace assumption on $T$ is essential in \cref{thm.euclidean-subgroup}. 
If $T$ contains a lattice summand $\Lambda$, then we expect estimating $c_2(V/\Gamma)$ to be at least as difficult as estimating $c_2(V/\Lambda)$, which is already nontrivial; see \cite{HavivR:10, HeimendahlLVZ:22,VallentinM:23}. 
This raises a natural question: given $\Gamma\leq \E(V)$, with translation lattice $T=V\cap \Gamma$ and closed point group $G=\pi(\Gamma)\leq\O(V)$, can $c_2(V/\Gamma)$ be bounded in terms of $c_2(V/T)$ and $c_2(V/G)$? Does the answer depend on whether the extension $0\rightarrow T\rightarrow\Gamma\rightarrow G\rightarrow 1$ splits?

\section{Portfolio of applications}
\label{sec.applications}

In this part, we use the tools developed in the previous section to bound the Euclidean distortion of several important families of orbit spaces.

\subsection{Scalar actions of cyclic groups}
\label{sec.root unity}

In this section, we determine the Euclidean distortion of the metric space $\mathbb C^n / C_r$ for each $r \geq 2$, where the cyclic group $C_r$ acts on $\mathbb{C}^n$ via multiplication by $r$th roots of unity.

Considering $C_r$ is a subgroup of the unit circle $\mathbb{T}\leq\mathbb{C}^\times$, one might start by collecting invariants of this larger group.
To this end, Corollary~37 in~\cite{CahillIM:24} (see \cref{prop.known-distortions}) gives that $c_2(\mathbb{C}^n/\mathbb{T})$ is achieved by the map induced by the invariant $\phi\colon\mathbb{C}^n\to(\mathbb{C}^n)^{\otimes2}$ defined by
\[
\phi(u)
=\left\{\begin{array}{cl}
\displaystyle\frac{u\otimes\overline{u}}{\|u\|}&\text{if }u\neq0\\[16pt]
0&\text{else}.
\end{array}\right.
\]
It remains to find invariants to $C_r$ that bilipschitzly separate $C_r$-orbits within $\mathbb{T}$-orbits.
In fact, one may show that the invariant $\psi\colon\mathbb{C}^n\to(\mathbb{C}^n)^{\otimes r}$ defined by
\[
\psi(u)
=\left\{\begin{array}{cl}
\displaystyle\frac{u^{\otimes r}}{\|u\|^{r-1}}&\text{if }u\neq0\\[16pt]
0&\text{else}
\end{array}\right.
\]
induces a bilipschitz map $\psi_{/C_r}\colon\mathbb{C}^n/C_r\to(\mathbb{C}^n)^{\otimes r}$, though with sub-optimal distortion.

We aim to achieve an even better distortion by combining $\phi$ and $\psi$ as a quotient--orbit embedding of $X:=\mathbb{C}^n/C_r$ with $G:=\mathbb{T}/C_r$.
Consider the action of $\mathbb{T}/C_r$ on $(\mathbb{C}^n)^{\otimes r}$ defined by taking $g\cdot x^{\otimes r}:=(gx)^{\otimes r}$ and extending linearly.
Then since $\psi_{/C_r}$ is lower Lipschitz, a suitable multiple of $\psi_{/C_r}$ is orbit expanding with respect to this action.
One can also verify that any scalar multiple of $\psi_{/C_r}$ is alignment preserving with respect to this action.
Unfortunately, a direct application of \cref{thm.align preserve} in this setting is rather weak, producing a quotient--orbit embedding with distortion bound
\[
2\cdot\sqrt{1+2\kappa(\psi_{/C_r})^2},
\]
notably worse than the distortion of $\psi_{/C_r}$.
A more careful analysis reveals that a particular mixture of $\phi$ and $\psi$ delivers a quotient--orbit embedding that achieves the Euclidean distortion of $\mathbb{C}^n/C_r$.

\begin{theorem}
\label{thm.root unity}
Consider $\phi$ and $\psi$ as above, and define $F\colon \mathbb C^n/C_r\to (\mathbb C^n)^{\otimes 2}\times (\mathbb C^n)^{\otimes r}$ by
\[
F([u])
= \big(~
\cos(\tfrac{\pi}{2r})\phi(u),~
\sin(\tfrac{\pi}{2r})\psi(u)
~\big).
\]
Then for all $u,v\in\mathbb C^n$, it holds that
\[
d_{C_r}([u],[v])
\leq \|F([u])-F([v])\|
\leq r\sin(\tfrac{\pi}{2r})\cdot d_{C_r}([u],[v]).
\]
In particular, $c_2(\mathbb C^n/C_r)
= \kappa(F) 
= r\sin(\tfrac{\pi}{2r})$.
\end{theorem}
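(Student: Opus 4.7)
The plan is to verify the claimed two-sided Lipschitz inequalities for $F$ by a direct computation and then combine with a matching sub-metric lower bound. First I would observe that $F$ descends to a well-defined map on $\mathbb{C}^n/C_r$: the map $\phi$ is $\mathbb{T}$-invariant (hence $C_r$-invariant), while $\psi$ is $C_r$-invariant since $(\zeta u)^{\otimes r} = \zeta^r u^{\otimes r} = u^{\otimes r}$ for $\zeta\in C_r$. The matching lower bound $c_2(\mathbb{C}^n/C_r) \geq r\sin(\tfrac{\pi}{2r})$ comes from viewing $\mathbb{C}/C_r$ as a sub-metric space of $\mathbb{C}^n/C_r$ (via the first-coordinate inclusion $\mathbb{C}\hookrightarrow \mathbb{C}^n$) and invoking the $n=1$ case of Corollary~38 of~\cite{CahillIM:24} (recorded in \cref{prop.known-distortions}); so the main task is to prove $\kappa(F)\leq r\sin(\tfrac{\pi}{2r})$.

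For this, I would normalize: for $u,v\in\mathbb{C}^n$ (both nonzero; the remaining case is immediate since $F$ is norm-preserving), replace $v$ by the element of its $C_r$-orbit minimizing $\|u-v\|$, so that $\phi_0 := \arg\langle u,v\rangle \in [0,\pi/r]$. Writing $s:=\|u\|$, $t:=\|v\|$, $\rho:=|\langle u,v\rangle|\in [0,st]$, the tensor identity $\langle u^{\otimes r},v^{\otimes r}\rangle_{\mathbb C} = \langle u,v\rangle^r$ (and taking real parts, since the tensor spaces are viewed as real Hilbert spaces) yields
\[
d_{C_r}([u],[v])^2 = s^2+t^2-2\rho\cos\phi_0,
\]
\[
\|F([u])-F([v])\|^2 = s^2+t^2 - 2\cos^2(\tfrac{\pi}{2r})\,\tfrac{\rho^2}{st} - 2\sin^2(\tfrac{\pi}{2r})\,\tfrac{\rho^r\cos(r\phi_0)}{(st)^{r-1}}.
\]
The lower-Lipschitz inequality is scale-free: dividing the difference by $2\rho$ (the $\rho=0$ case is trivial) and setting $y:=\rho/(st)\in[0,1]$ reduces it to
\[
\cos\phi_0 \geq \cos^2(\tfrac{\pi}{2r})\,y + \sin^2(\tfrac{\pi}{2r})\,y^{r-1}\cos(r\phi_0),
\]
which at $y=1$ is exactly the already-known 1-dim lower-Lipschitz bound, at $y=0$ reads $\cos\phi_0\geq 0$, and in between follows by routine univariate monotonicity in $y$.

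The upper-Lipschitz inequality with $\gamma := r\sin(\tfrac{\pi}{2r})$ rearranges into an inequality whose left-hand side contains a $(\gamma^2-1)(s^2+t^2)$ term; since $\gamma\geq 1$, AM--GM shows that for fixed $st$ the relevant quantity is minimized at $s=t$. Rescaling to $s=t=1$ and writing $y=\rho\in[0,1]$ reduces the bound to $H(y,\phi_0)\geq 0$, where
\[
H(y,\phi_0) := (\gamma^2-1) + \cos^2(\tfrac{\pi}{2r})\,y^2 - \gamma^2 y\cos\phi_0 + \sin^2(\tfrac{\pi}{2r})\,y^r\cos(r\phi_0).
\]
At $y=0$ this is $\gamma^2-1\geq 0$; at $y=1$ it simplifies (using $\gamma^2 = r^2\sin^2(\tfrac{\pi}{2r})$) to $r^2\sin^2(\phi_0/2)\geq \sin^2(r\phi_0/2)$, which is the classical inequality $\sin(rx)\leq r\sin(x)$ applied with $x = \phi_0/2 \in [0,\pi/(2r)]$. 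The main obstacle is establishing $H(y,\phi_0)\geq 0$ for interior $y\in(0,1)$: when $\cos(r\phi_0)\geq 0$ the function $H(\cdot,\phi_0)$ has nonnegative second derivative and the bound follows by tracking its (at most one) critical point in $[0,1]$ against the endpoint values; when $\cos(r\phi_0)<0$ this convexity fails and a more delicate argument is needed, either by analyzing critical points directly or by sharpening the AM--GM reduction so that a different extremal configuration can be exploited. Once $H\geq 0$ is verified, both Lipschitz bounds on $F$ hold, giving $\kappa(F) = r\sin(\tfrac{\pi}{2r})$ and hence $c_2(\mathbb{C}^n/C_r) = r\sin(\tfrac{\pi}{2r})$.
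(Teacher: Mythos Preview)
Your overall architecture matches the paper's: the sub-metric lower bound, the reduction (via AM--GM / homogeneity, where the paper instead invokes a ``sphere-to-sphere'' lemma) to unit vectors, and the identification of the key scalar inequality $H(y,\phi_0)\ge 0$ are all correct, and for the lower Lipschitz bound your monotonicity argument in $y$ is fine once one notes $r\sin^2(\tfrac{\pi}{2r})\le 1$ (which you should state---it is exactly what makes $L'(y)\ge L'(1)\ge 0$ in the bad-sign case).

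The genuine gap is the upper Lipschitz bound. Your treatment of $H(\cdot,\phi_0)$ is incomplete in \emph{both} regimes. When $\cos(r\phi_0)\ge 0$ you appeal to convexity, but convexity together with $H(0),H(1)\ge 0$ does \emph{not} give $H\ge 0$ on $(0,1)$: a convex function can dip below zero between two nonnegative endpoints, so ``tracking the critical point against the endpoints'' proves nothing without actually evaluating $H$ at that critical point (which you do not do, and which leads to an unpleasant polynomial equation). And for $\cos(r\phi_0)<0$ you explicitly concede that a further idea is required. So as written the upper bound is not established.

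The paper resolves this with a different slicing of the same region. Writing $z=ye^{i\phi_0}$ and $f(z)=H(y,\phi_0)$, instead of fixing $\phi_0$ and varying $y$ (radial slices), the paper fixes $\Im z$ and shows $\partial f/\partial(\Re z)\le 0$ throughout the sector $D=\{0<|z|\le 1,\ \arg z\in[0,\pi/r]\}$; this derivative simplifies to $-\tfrac12 a^2r^2+(1-a^2)t\cos\theta+\tfrac12 a^2 r t^{r-1}\cos((r-1)\theta)$ (with $a=\sin(\tfrac{\pi}{2r})$), which is maximized at $t=1,\theta=0$ and then reduces to the single checkable inequality $a^2\ge \tfrac{2}{r^2-r+2}$. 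Since every horizontal chord of $D$ has its rightmost point on the arc $|z|=1$, the nonnegativity of $f$ on $|z|=1$ (your $y=1$ computation, i.e.\ $|\sin(r\phi_0/2)|\le r\sin(\phi_0/2)$) propagates leftward to all of $D$. This horizontal-slice monotonicity is the missing idea; your radial slicing does not give a clean monotonicity statement, which is why both of your sub-cases stall.
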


We note that the $n=1$ case coincides with the optimal embedding of $\mathbb{C}/C_r$ given in \cite{CahillIM:24}.

\begin{proof}[Proof of \cref{thm.root unity}]
Consider any $1$-dimensional complex subspace $V$ of $\mathbb{C}^n$.
Since $V$ is invariant under the action of $C_r$, it follows that $V/C_r$ is a sub-metric space of $\mathbb{C}^n/C_r$, and so
\[
\kappa(F)
\geq c_2(\mathbb{C}^n/C_r)
\geq c_2(V/C_r)
=r\sin(\tfrac{\pi}{2r}),
\]
where the last step is by Corollary~38 in~\cite{CahillIM:24} (see \cref{prop.known-distortions}).
It remains to show that the lower and upper Lipschitz bounds of $F$ are $1$ and $r\sin(\frac{\pi}{2r})$, respectively.

Note that $F$ maps the sphere $S$ in $\mathbb C^n$ to the sphere in $(\mathbb C^n)^{\otimes 2}\times (\mathbb C^n)^{\otimes r}$.
Furthermore, $F$ is \textit{positively homogeneous} in the sense that $F([ru])=rF([u])$ for every $r\geq0$ and $u\in\mathbb{C}^n$.
By Theorem~13 in~\cite{CahillIM:24}, it suffices to verify our bilipschitz bounds for $u,v\in S$.   
To this end, fix $u,v\in S$ and put $z:= u^* v$.
Then
\begin{align*}
\|F([u])-F([v])\|^2
&=\cos^2(\tfrac{\pi}{2r})\cdot\|\phi(u)-\phi(v)\|^2+\sin^2(\tfrac{\pi}{2r})\cdot\|\psi(u)-\psi(v)\|^2\\
&=\cos^2(\tfrac{\pi}{2r})\cdot\big(2 - 2|z|^2\big)+\sin^2(\tfrac{\pi}{2r})\cdot\big(2 - 2\Re(z^r)\big)\\
&=2-2\cdot\Big(\cos^2(\tfrac{\pi}{2r})\cdot|z|^2+\sin^2(\tfrac{\pi}{2r})\cdot\Re(z^r)\Big).
\end{align*}
Meanwhile, if we let $\omega$ denote a primitive $r$th root of unity, we have
\[
d_{C_r}([u],[v])^2 
= 2 - 2\max_{j\in[r]}\Re(\omega ^j z).
\]
If $z=0$, then both quantities above equal $2$, and so equality is achieved in our putative lower Lipschitz bound of $1$.
Thus, we may assume $z\neq 0$.
Take $t:= |z|\in (0,1]$ and $\theta:= \arg(z)$. By invariance under the actions of $C_r$ and complex conjugation, we may further assume $\theta \in [0,\frac{\pi}{r}]$ so that $d_{C_r}([u],[v])^2=2-2t\cos(\theta)$. 
    
Put $a:=\sin(\frac{\pi}{2r})$.
For the lower Lipschitz bound, we aim to show
\[
2-2t\cos(\theta)
\leq 2 - 2\Big( (1-a^2)t^2+a^2t^r\cos(r\theta)\Big),
\]
which rearranges to
\[
(1-a^2)t+a^2t^{r-1}\cos(r\theta)
\leq \cos(\theta).
\]
Letting $L(t)$ denote the left-hand side, it suffices to show
\begin{itemize}
\item[(i)] $L(t)\leq L(1)$, and
\item[(ii)] $L(1)\leq \cos(\theta)$.
\end{itemize}
We prove (i) by multiple applications of the mean value theorem.
First,
\[
L'(t)
=1-a^2+a^2(r-1)t^{r-2}\cos(r\theta)
\geq 1-a^2-a^2(r-1)
=1-r\sin^2(\tfrac{\pi}{2r}).
\]
Thus, it suffices to show $1-r\sin^2(\tfrac{\pi}{2r})\geq0$.
Since equality holds when $r=2$, it suffices to show that $r\mapsto r\sin^2(\tfrac{\pi}{2r})$ is decreasing over $r\geq2$.
Taking the derivative, it then suffices to show $\tan(\tfrac{\pi}{2r})\leq\frac{\pi}{r}$ for all $r>2$.
In fact, we get $\tan(x)-2x\leq 0$ for all $x\in[0,\frac{\pi}{4}]$ by noting that equality holds at $x=0$, and observing that the derivative is $\sec^2(x)-2\leq0$ over $[0,\frac{\pi}{4}]$.

For (ii), we view $L(1)-\cos(\theta)$ as a function of $\theta$, and our task is to show nonpositivity:
\[
g(\theta)
:=L(1)-\cos(\theta)
=1-a^2+a^2\cos(r\theta)-\cos(\theta)
\leq 0
\]
for $\theta\in[0,\frac{\pi}{r}]$.
Notably, $g(0)=g(\frac{\pi}{r})=0$ and $g(\frac{\pi}{2r})=-2\sin^2(\frac{\pi}{4r})\cos(\frac{\pi}{2r})<0$.
By the mean value theorem, there necessarily exist $\theta_-\in(0,\tfrac{\pi}{2r})$ and $\theta_+\in(\tfrac{\pi}{2r},\tfrac{\pi}{r})$ for which $g'(\theta_-)<0$ and $g'(\theta_+)>0$.
We claim that $g'$ has a unique root $\theta_0\in(0,\frac{\pi}{r})$, meaning $g'$ is negative over $(0,\theta_0)$ and positive over $(\theta_0,\frac{\pi}{r})$, and so our claim that $g(\theta)\leq 0$ for all $\theta\in[0,\frac{\pi}{r}]$ follows from the mean value theorem.
Write
\[
g'(\theta)
=\underbrace{\sin(\theta)}_{f_1(\theta)}-\underbrace{a^2r\sin(r\theta)}_{f_2(\theta)}.
\]
See \cref{fig:scalar g prime proof aid} for an illustration.
Observe that $f_1(0)=0$, and then $f_1$ is increasing over $[0,\frac{\pi}{r}]$ with $f_1(\theta)\leq\theta$.
Meanwhile, $f_2(0)=0$, and then $f_2$ is increasing over $[0,\frac{\pi}{2r}]$ with $f_2(\theta)\geq a^2r\cdot\frac{2r}{\pi}\cdot\theta$, and then decreasing over $[\frac{\pi}{2r},\frac{\pi}{r}]$ until $f_2(\frac{\pi}{r})=0$.
Thus, if we can show that $a^2r\cdot\frac{2r}{\pi}>1$, then $g'$ is negative over $\theta\in(0,\frac{\pi}{2r}]$ and increasing over $[\frac{\pi}{2r},\frac{\pi}{r}]$ with $g'(\frac{\pi}{r})>0$, and so $g'$ indeed has a root in $(0,\frac{\pi}{r})$ by the intermediate value theorem, and it is unique by the mean value theorem.
Finally, $a^2r\cdot\frac{2r}{\pi}>1$ follows from the fact that $r\mapsto r^2\sin^2(\frac{\pi}{2r})$ is increasing in $r\geq2$ and equals $2>\frac{\pi}{2}$ when $r=2$. 

\begin{figure}[t]
  \centering
  \includegraphics[scale = 1]{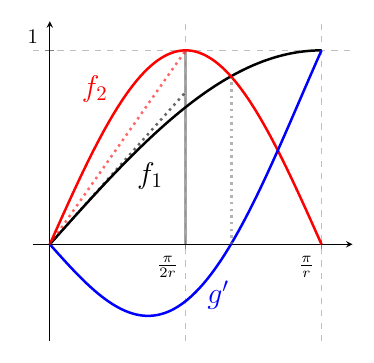}
  \caption{Plot of the functions $f_1$, $f_2$, and $g'=f_1-f_2$ analyzed in the proof of \cref{thm.root unity}.}
  \label{fig:scalar g prime proof aid}
\end{figure}

For the upper Lipschitz bound, recall that we only need to consider $z:=u^*v \in D$, where 
\[
D
:=\big\{z\in\mathbb C:0<|z|\leq 1,\, \arg(z)\in [0,\pi/r]\big\}.
\]
In particular, again taking $a:=\sin(\frac{\pi}{2r})$, it suffices to show that every $z\in D$ satisfies
\[
f(z)
:= \underbrace{a^2r^2(1 - \operatorname{Re} z)}_{\frac{1}{2}(ar)^2 d_{C_r}([u],[v])^2} - \underbrace{( 1 - (1-a^2)|z|^2 - a^2 \operatorname{Re}z^r)}_{\frac{1}{2}\|F([u])-F([v])\|^2} 
\geq 0,
\]
For $|z|=1$, this was established by Example~18 in~\cite{CahillIM:24}.
We will show $\frac{\partial f(z)}{\partial\Re(z)}\leq 0$ for all $z\in D$, since then extrapolating leftward from the $|z|=1$ edge gives $f(z)\geq0$ for all $z\in D$. 

To this end, we start by computing a Wirtinger derivative:
\begin{align*}
\frac{\partial}{\partial z} f(z)
&=\frac{\partial}{\partial z}\bigg[a^2r^2\bigg(1 - \frac{z+\overline z}{2}\bigg)
-\bigg(1-(1-a^2)z\overline{z}-a^2\cdot\frac{z^r+\overline{z}^r}{2}\bigg)\bigg]\\[6pt]
&=-\frac{1}{2}a^2r^2+(1-a^2)\overline{z}+\frac{1}{2}a^2rz^{r-1}.
\end{align*}
Writing $z = te^{i\theta}$ with $\theta\in [0,\frac{\pi}{r}]$, we obtain
\[
\frac{\partial f(z)}{\partial\Re(z)} 
= \Re \frac{\partial }{\partial z}f(z) 
= -\frac{1}{2}a^2r^2+(1-a^2)t\cos(\theta)+\frac{1}{2}a^2rt^{r-1}\cos((r-1)\theta).
\]
This expression is maximized at $t=1$ and $\theta = 0$, so it suffices to verify
\[
-\frac{1}{2}a^2r^2+(1-a^2)+\frac{1}{2}a^2r
\leq0
\]
for every integer $r\geq 2$.
This inequality rearranges to $a^2
\geq\frac{2}{r^2-r+2}$, which holds for small $r$ by direct computation.
Meanwhile, for large $r$, this inequality follows from the facts that
\[
a^2
=\sin^2\Big(\frac{\pi}{2r}\Big)
\sim \frac{\pi^2}{4}\cdot\frac{1-o(1)}{r^2},
\qquad
\frac{2}{r^2-r+2}
\sim 2\cdot\frac{1+o(1)}{r^2},
\qquad
\frac{\pi^2}{4}>2.
\]
We conclude by making these bounds effective.
Letting $x:=\frac{1}{r}$, we compute the Taylor series expansions of both functions about $x=0$, and then apply Taylor's theorem to obtain uniform bounds of both functions over $x\in (0,\frac{1}{2}]$:
\[
\sin^2\Big(\frac{\pi}{2}x\Big)
\geq \frac{\pi^2}{4}x^2-\frac{\pi^4}{ 48}x^4
> 2x^2+\frac{12}{3!}x^3
\geq \frac{2x^2}{1-x+2x^2},
\]
where the strict inequality holds when $x\in(0,0.195)$.
Thus, $a^2
\geq\frac{2}{r^2-r+2}$ for all $r\geq 6$.
\end{proof}
    

\subsection{Special orthogonal groups}
\label{sec.so}

In this section, we construct a bilipschitz embedding of $\mathbb R^{r\times n}/\SO(r)$.
Considering this space reduces to $\mathbb R^{r\times n}/\O(r)$ when $n<r$, we focus on the setting in which $n\geq r$.
Our bilipschitz map is directly inspired by the classical generating family of polynomial invariants, which, given $X\in\mathbb{R}^{r\times n}$, pairs the $\O(r)$-invariant Gram matrix $X^T X\in\mathbb{R}^{n\times n}$ with the $\SO(r)$-invariant \textbf{Pl\"{u}cker coordinates} $\operatorname{Plu}(X)\in \mathbb{R}^{\binom{n}{r}}$, namely, the determinants of all $r\times r$ submatrices of $X$.
First, we modify the Gram matrix so that it bilipschitzly separates $\O(r)$-orbits.
For this, it suffices to take the positive semidefinite square root of the Gram matrix.
Next, we perform a careful spectral modification of the Pl\"{u}cker coordinates to obtain a Lipschitz map that induces an orbit-expanding and alignment-preserving map $\mathbb{R}^{r\times n}/\SO(r)\to\mathbb{R}^{\binom{n}{r}}$ with respect to the group $\O(r)/\SO(r)$.
By combining these maps, we obtain a quotient--orbit embedding of $\mathbb R^{r\times n}/\SO(r)$ with distortion at most $2\sqrt{2}$. 



We start by considering the map $\phi\colon \mathbb{R}^{r\times n}\to\mathbb{R}^{n\times n}$ defined by
\[
\phi(X) 
= \sqrt{X^TX}.
\]
It turns out that $\phi_{/\O(r)}$ achieves the Euclidean distortion of $\mathbb R^{r\times n}/\O(r)$.

\begin{proposition}
\label{prop.gram bounds}
Given $X,Y\in \mathbb R^{r\times n}$, it holds that
\[
d_{\O(r)}([X],[Y]) 
\leq \|\phi(X) - \phi(Y)\|_F
\leq \sqrt{2} \cdot d_{\O(r)}([X],[Y]).
\]
The lower Lipschitz bound is tight for all $n\geq 1$, and the upper Lipschitz bound is tight for all $n\geq 2$. 
Moreover, for all $n\geq 2$, we have
\[
c_2(\mathbb R^{r\times n}/\Oname(r)) 
= \kappa(\phi_{/\O(r)}) 
= \sqrt 2.
\]
\end{proposition}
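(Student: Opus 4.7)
My plan is to recast both $d_{\O(r)}([X],[Y])^2$ and $\|\phi(X)-\phi(Y)\|_F^2$ in a common form via the singular value decomposition. The standard Procrustes identity $\max_{U\in\O(r)}\langle UX,Y\rangle_F = \|XY^T\|_*$ (nuclear norm on $\mathbb{R}^{r\times r}$) yields $d_{\O(r)}([X],[Y])^2 = \|X\|_F^2 + \|Y\|_F^2 - 2\|XY^T\|_*$, while direct expansion gives $\|\phi(X)-\phi(Y)\|_F^2 = \|X\|_F^2 + \|Y\|_F^2 - 2\Tr(\phi(X)\phi(Y))$. Writing the SVDs $X = U_X D_X V_X^T$ and $Y = U_Y D_Y V_Y^T$ with singular-value vectors $\sigma,\tau\in\mathbb{R}^r$, and letting $P$ denote the top-left $r\times r$ block of the orthogonal matrix $W := V_X^T V_Y$, a block computation gives
\[
\|XY^T\|_* = \|\diag(\sigma)\,P\,\diag(\tau)\|_*, \qquad \Tr(\phi(X)\phi(Y)) = \Tr\bigl(\diag(\sigma)\,P\,\diag(\tau)\,P^T\bigr) = \sum_{i,j}\sigma_i\tau_j P_{ij}^2.
\]
Because $P$ is a submatrix of $W\in\O(n)$, we have $\|P\|_{\operatorname{op}}\leq 1$, and every row and column of $P$ has Euclidean norm at most $1$.

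The lower Lipschitz bound is then immediate from nuclear--operator norm duality:
\[
\Tr(\phi(X)\phi(Y)) = \langle \diag(\sigma)\,P\,\diag(\tau),\,P\rangle_F \leq \|\diag(\sigma)\,P\,\diag(\tau)\|_*\cdot\|P\|_{\operatorname{op}} \leq \|XY^T\|_*,
\]
which gives $\|\phi(X)-\phi(Y)\|_F \geq d_{\O(r)}([X],[Y])$. The upper Lipschitz bound is the main obstacle. Using the parallelogram identity $\|\phi(X)-\phi(Y)\|_F^2 + \|\phi(X)+\phi(Y)\|_F^2 = 2(\|X\|_F^2 + \|Y\|_F^2)$, the target estimate $\|\phi(X)-\phi(Y)\|_F^2 \leq 2 d_{\O(r)}([X],[Y])^2$ is equivalent to $4\|XY^T\|_* \leq \|\phi(X)+\phi(Y)\|_F^2$. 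By duality, $\|\diag(\sigma)\,P\,\diag(\tau)\|_* = \max_{\|Q\|_{\operatorname{op}}\leq 1}\sum_{i,j}\sigma_i\tau_j P_{ij}Q_{ij}$; for any such $Q$, pointwise $2ab\leq a^2+b^2$ gives $2\sum\sigma_i\tau_j P_{ij}Q_{ij} \leq \sum\sigma_i\tau_j P_{ij}^2 + \sum\sigma_i\tau_j Q_{ij}^2$, and the AM--GM bound $\sigma_i\tau_j \leq \tfrac{1}{2}(\sigma_i^2+\tau_j^2)$ combined with the row- and column-norm bounds $\sum_j Q_{ij}^2,\ \sum_i Q_{ij}^2 \leq 1$ yields $\sum\sigma_i\tau_j Q_{ij}^2 \leq \tfrac{1}{2}(\|\sigma\|^2+\|\tau\|^2)$. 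Doubling the resulting inequality $2\|XY^T\|_* \leq \sum\sigma_i\tau_j P_{ij}^2 + \tfrac{1}{2}(\|X\|_F^2+\|Y\|_F^2)$ produces the required estimate.

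Tightness of the lower bound follows by setting $Y=0$, where both sides equal $\|X\|_F$. For tightness of the upper bound when $n\geq 2$, take $X$ with first row $e_1$ and $Y$ with first row $(\cos\theta,\sin\theta,0,\dots)$ (other rows zero in both); then $\|\phi(X)-\phi(Y)\|_F = \sqrt{2}|\sin\theta|$ while $d_{\O(r)}([X],[Y]) = \sqrt{2-2\cos\theta}$, a ratio tending to $\sqrt{2}$ as $\theta\to 0$. Finally, the Euclidean distortion identification: $c_2(\mathbb{R}^{r\times n}/\O(r))\leq\sqrt{2}$ from $\phi_{/\O(r)}$; for the reverse inequality, the map $x\mapsto\left[\begin{smallmatrix}x\\ 0\end{smallmatrix}\right]$ gives an isometric inclusion $\mathbb{R}^n/\O(1)\hookrightarrow\mathbb{R}^{r\times n}/\O(r)$ (verified by a direct computation showing the optimizing $U\in\O(r)$ must act as $\pm 1$ on $e_1$), so combining with Corollary~36 of~\cite{CahillIM:24}, which gives $c_2(\mathbb{R}^n/\O(1))=\sqrt{2}$ for $n\geq 2$, produces $c_2\geq\sqrt{2}$.
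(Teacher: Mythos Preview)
Your proof is correct and, for the $c_2$-optimality portion, follows the paper's argument exactly: the zero-padding map $\mathbb{R}^{1\times n}\hookrightarrow\mathbb{R}^{r\times n}$ induces an isometric embedding of quotients, and then Corollary~36 of~\cite{CahillIM:24} gives the matching lower bound $\sqrt{2}$.

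For the two-sided Lipschitz inequality and its tightness, the paper does not give a self-contained argument but defers to an adaptation of Theorem~3.7(i) of~\cite{BalanD:22}. Your approach---reducing both $d_{\O(r)}([X],[Y])^2$ and $\|\phi(X)-\phi(Y)\|_F^2$ to expressions in the singular values and the block $P$ of $V_X^T V_Y$, then invoking nuclear--operator duality for the lower bound and the parallelogram identity plus the elementary inequality $2P_{ij}Q_{ij}\leq P_{ij}^2+Q_{ij}^2$ for the upper bound---is a clean, self-contained route. One minor point: your SVD setup tacitly assumes $n\geq r$ (the singular-value vectors live in $\mathbb{R}^r$ and $P$ is the top-left $r\times r$ block of $W\in\O(n)$); when $n<r$ the same computations go through with $P=W\in\O(n)$ and the singular-value vectors in $\mathbb{R}^n$, so the argument is uniform in $n$ and $r$ once this is noted.
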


\begin{proof}
The inequality and tightness of bounds follow by adapting the proof of Theorem~3.7(i) in~\cite{BalanD:22}. 
Meanwhile, $c_2$ optimality is implied by the following chain of inequalities:
\[
\sqrt 2
\geq\kappa(\phi_{/\O(r)})
\geq c_2(\mathbb R^{r\times n}/\O(r)) 
\geq c_2(\mathbb R^{1\times n}/\O(1)) 
= \sqrt 2,
\] 
where the final equality follows from Corollary~36 in~\cite{CahillIM:24} (summarized in \cref{prop.known-distortions}), and final inequality holds since the zero-padding map $F\colon \mathbb R^{1\times n}\hookrightarrow \mathbb R^{r\times n}$ induces an isometric embedding of quotient spaces $\mathbb R^{1\times n}/\O(1)\to \mathbb R^{r\times n}/\O(r)$:
\begin{align*}
d_{\O(r)}([F(X)],[F(Y)])^2 
&= \|F(X)\|_F^2 + \|F(Y)\|_F^2 - 2\|F(X)F(Y)^T\|_*\\
&= \|X\|_F^2 + \|Y\|_F^2 - 2 \|XY^T\|_*\\
&= d_{\O(1)}([X],[Y])^2.
\tag*{\qedhere}
\end{align*}
\end{proof}

\begin{remark}
\label{rk.U(r)}
A simple modification of this proof gives $c_2(\mathbb C^{r\times n}/\operatorname{U}(r)) = \sqrt 2$ when $n\geq 2$.
\end{remark}

Now that we can bilipschitzly separate $\O(r)$-orbits, it remains to bilipschitzly separate $\SO(r)$-orbits within $\O(r)$-orbits.
We accomplish this by modifying the Pl\"ucker coordinates.
Given $X\in\mathbb{R}^{r\times n}$, take any thin singular value decomposition $X=U_X\Sigma_XV_X$ with $U_X\in \SO(r)$ and $V_X\in \mathbb R^{r\times n}$ such that $V_XV_X^T=\Id_r$.
Then we define the \textbf{scaled Pl\"ucker invariant} of $X$ to be
\[
\psi(X)
:=\sigma_{\min}(X)\cdot\operatorname{Plu}(V_X),
\]
where $\sigma_{\min}(X)$ is the smallest (i.e., $r$th largest) singular value of $X$.
Note that while $V_X$ is not uniquely determined by $X$, all possible choices of $V_X$ reside in a common $\SO(r)$-orbit, and so $\psi$ is well defined.
It turns out that $\psi$ bilipschitzly separates $\SO(r)$-orbits within $\O(r)$-orbits as intended:

\begin{lemma}
\label{thm.min sing scale upper}
Let $G:=\O(r)/\SO(r)$. 
Then $\psi_{/\SO(r)}$ is a $1$-Lipschitz map that is orbit expanding and alignment preserving with respect to the isometric action of $G$ on $\mathbb R^{r\times n}/\SO(r)$ and the sign representation of $G$ on $\mathbb R^{\binom{n}{r}}$.
\end{lemma}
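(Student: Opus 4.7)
First, I would establish the group-theoretic properties. Well-definedness and $\SO(r)$-invariance follow because any two SVDs of $X$ with $U_X\in\SO(r)$ yield $V_X$'s differing by an $\SO(r)$-action on rows, which preserves Pl\"ucker coordinates. For $G$-equivariance under the sign representation, fix a representative $A\in\O(r)\setminus\SO(r)$ of the nontrivial element $\sigma\in G$ and factor $AX=(AU_XD)\Sigma_X(DV_X)$ with $D:=\operatorname{diag}(1,\ldots,1,-1)$: then $AU_XD\in\SO(r)$, $(DV_X)(DV_X)^T=I_r$, and since flipping one row of $V_X$ negates every $r\times r$ minor, $\psi(AX)=-\psi(X)$.

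For orbit-expanding, Cauchy--Binet gives $\|\operatorname{Plu}(V_X)\|=1$, so $\|\psi(X)-\sigma\cdot\psi(X)\|=2\sigma_{\min}(X)$. To match $d_{\SO(r)}([X],\sigma[X])$, I would reduce via SVD to
\[
\min_{Q\in\O(r)\setminus\SO(r)}\|(I-Q)\Sigma_XV_X\|_F^2 = 2\|X\|_F^2 - 2\max_{Q\in\O(r)\setminus\SO(r)}\operatorname{tr}(Q\Sigma_X^2),
\]
and invoke the classical Procrustes-type formula $\max_{Q\in\O(r)\setminus\SO(r)}\operatorname{tr}(QD) = d_1+\cdots+d_{r-1}-d_r$ for diagonal $D=\operatorname{diag}(d_1\geq\cdots\geq d_r\geq 0)$; this yields $d_{\SO(r)}([X],\sigma[X])=2\sigma_{\min}(X)$, matching exactly. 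For alignment-preserving, I would compute
\[
d_{\SO(r)}([X],[Y])^2 - d_{\SO(r)}([X],\sigma[Y])^2 = -4\operatorname{sign}(\det(YX^T))\cdot\sigma_r(YX^T)
\]
and $\|\psi(X)-\psi(Y)\|^2 - \|\psi(X)+\psi(Y)\|^2 = -4\sigma_r(X)\sigma_r(Y)\det(V_XV_Y^T)$. Since Cauchy--Binet gives $\det(YX^T)=\det(V_XV_Y^T)\prod_i\sigma_i(X)\sigma_i(Y)$, both quantities share the sign of $\det(V_XV_Y^T)$, so the same $g\in G$ minimizes each side.

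The main obstacle is the $1$-Lipschitz property, which I would prove directly on $\mathbb R^{r\times n}$ (the quotient statement then follows by $\SO(r)$-invariance). At any $X$ with distinct positive singular values, SVD perturbation theory gives $\dot v_i = \sum_{j\neq i,\,j\leq r}\alpha_{ij}v_j + \sum_{k>r}\beta_{ik}v_k^\perp$ with $\beta_{ik}=\langle u_i,Ev_k^\perp\rangle/\sigma_i(X)$, where $\{v_k^\perp\}_{k>r}$ is an orthonormal basis of $\operatorname{span}(v_1,\ldots,v_r)^\perp$. In the expansion $d(\operatorname{Plu}(V_X))[E]=\sum_i v_1\wedge\cdots\wedge\dot v_i\wedge\cdots\wedge v_r$, the $\alpha_{ij}$ contributions vanish by a repeated wedge factor, leaving an orthogonal expansion in the unit vectors $\omega_{i,k}:=v_1\wedge\cdots\wedge v_k^\perp\wedge\cdots\wedge v_r$ (with $v_k^\perp$ in position $i$), each orthogonal to $\operatorname{Plu}(V_X)$. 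Combined with the $\operatorname{Plu}(V_X)$-component from $d\sigma_r[E]=u_r^TEv_r$, I obtain
\[
\|d\psi_X[E]\|^2 = (u_r^TEv_r)^2 + \sum_{i\leq r,\,k>r}\Big(\tfrac{\sigma_r(X)}{\sigma_i(X)}\Big)^{\!2}(u_i^TEv_k^\perp)^2 \leq \|E\|_F^2,
\]
using $\sigma_r/\sigma_i\leq 1$ together with the orthonormal expansion $\|E\|_F^2=\sum_{i,j\leq r}(u_i^TEv_j)^2+\sum_{i\leq r,\,k>r}(u_i^TEv_k^\perp)^2$ in the basis $\{u_iv_j^T\}$. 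Since matrices with distinct positive singular values form a dense open subset of $\mathbb R^{r\times n}$, continuity of $\psi$ and a straight-line approximation extend the Lipschitz bound globally.
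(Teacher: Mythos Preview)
Your proposal is correct. The treatment of orbit-expanding and alignment-preserving mirrors the paper's (both ultimately rest on the Cauchy--Binet identity and the signed Procrustes formula of \cref{prop.so max filter}), so there is nothing to add there.

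The $1$-Lipschitz argument, however, is genuinely different. The paper proves the global inequality $\|\psi(X)-\psi(Y)\|^2\leq d_{\SO(r)}([X],[Y])^2$ directly by combining the von Neumann trace inequality with a multivariate Bernoulli bound $\prod(1+x_i)\geq 1+\sum x_i$; no smoothness or genericity enters. Your route instead bounds the differential: the SVD-perturbation expansion in the wedge basis $\{\omega_{i,k}\}$ makes transparent \emph{why} scaling Pl\"ucker by $\sigma_r$ tames the map (each off-row-space coefficient picks up a factor $\sigma_r/\sigma_i\leq 1$), and the resulting bound $\|d\psi_X[E]\|\leq\|E\|_F$ is clean. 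What each buys: the paper's argument is elementary and global (no bad set, no limiting step), while yours is more geometric and arguably explains the role of $\sigma_r$ better, at the cost of a density-and-continuity extension.

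Two points in your sketch deserve a sentence of justification. First, the extension from the generic locus relies on $\psi$ being continuous \emph{everywhere}; this is true (at rank-deficient $X$ one has $\psi(X)=0$ and $\|\psi(Y)\|=\sigma_r(Y)\to 0$, while at repeated singular values the ambiguity in $V_X$ lives in $\SO(r)$ and leaves $\operatorname{Plu}(V_X)$ unchanged), but should be stated. Second, ``straight-line approximation'' needs the observation that the bad set (repeated or zero singular values) is real algebraic, so any segment with a generic endpoint meets it in finitely many points; split there, apply the derivative bound on each piece, and paste by continuity. With these remarks your argument is complete.
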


We defer the proof of \cref{thm.min sing scale upper} to the end of the section.
What follows is the main result of this section.

\begin{theorem}
\label{thm.so psi lower bound}
The map $F\colon \mathbb R^{r\times n}/\operatorname{SO}(r)\to \mathbb R^{n\times n}\times \mathbb R^{\binom{n}{r}}$ defined by
\[
F([X]) 
= \big(~\phi(X),~\sqrt 2\cdot\psi(X)~\big)
\]
has distortion $\kappa(F)\leq 2\sqrt{2}$.
Moreover, for all $n\geq r\geq 2$, we have
\[
\sqrt{2}
\leq c_2(\mathbb R^{r\times n}/\SO(r))
\leq 2\sqrt 2.
\]
\end{theorem}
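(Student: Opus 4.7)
The plan is to combine the two main ingredients already at hand---the bilipschitz embedding $\phi_{/\O(r)}$ of $\mathbb{R}^{r\times n}/\O(r)$ from \cref{prop.gram bounds} and the alignment-preserving, orbit-expanding map $\psi_{/\SO(r)}$ from \cref{thm.min sing scale upper}---by feeding them into the quotient--orbit framework of \cref{thm.align preserve}. The key observation is that $G := \O(r)/\SO(r) \cong C_2$ acts isometrically on $X := \mathbb R^{r\times n}/\SO(r)$, with $X/G$ isometric to $\mathbb R^{r\times n}/\O(r)$. \cref{prop.gram bounds} supplies $\alpha_\phi = 1$ and $\beta_\phi = \sqrt{2}$ for $\phi_{/\O(r)}$, while \cref{thm.min sing scale upper} supplies $\gamma = 1$ together with the required orbit-expanding and alignment-preserving properties for $\psi_{/\SO(r)}$ with respect to the sign representation of $G$ on $\mathbb R^{\binom{n}{r}}$. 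Substituting these values with $c = \sqrt{2}$ into \cref{thm.align preserve} produces exactly the map $F$ in the statement, with distortion bound
\[
\kappa(F) \leq \sqrt{2}\cdot\sqrt{(\sqrt{2})^2 + (\sqrt{2}\cdot 1)^2} = 2\sqrt{2}.
\]
In the edge case $n=r$ where $\binom{n}{r}=1$, I would first view $\psi$ as taking values in $\mathbb R^2$ by appending a zero coordinate so that $\dim H > 1$ and $c = \sqrt{2}$ is legal in \cref{thm.align preserve}; the padding has no effect on the Frobenius norm or on the stated properties of $\psi$.

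The upper bound $c_2(\mathbb R^{r\times n}/\SO(r))\leq 2\sqrt{2}$ is immediate. For the matching lower bound $\sqrt{2}$, I would construct an isometric embedding of a known space. Let $\iota \colon \mathbb R^{1\times n}\hookrightarrow \mathbb R^{r\times n}$ send $x$ to the matrix whose first row is $x$ and whose other rows vanish. For any $R\in\SO(r)$, a direct expansion using $\sum_i R_{i1}^2 = 1$ gives
\[
\|\iota(x) - R\iota(y)\|_F^2 = \|x\|^2 + \|y\|^2 - 2R_{11}\langle x,y\rangle,
\]
so the minimum over $R\in\SO(r)$ equals $\|x\|^2+\|y\|^2-2|\langle x,y\rangle| = \min(\|x-y\|^2,\|x+y\|^2)$, attained at $R_{11}=\pm 1$ via $R=\Id$ and $R=\operatorname{diag}(-1,-1,1,\ldots,1)\in\SO(r)$ respectively, where the second option requires $r\geq 2$. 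Thus $\iota$ descends to an isometric embedding $\mathbb R^{1\times n}/\O(1)\hookrightarrow \mathbb R^{r\times n}/\SO(r)$, and combining with $c_2(\mathbb R^{1\times n}/\O(1)) = \sqrt{2}$ from Corollary~36 of~\cite{CahillIM:24} (see \cref{prop.known-distortions}) yields the bound.

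The proof is essentially a direct composition: \cref{thm.align preserve} does the heavy lifting for the upper bound, while the lower bound reuses the zero-padding trick from \cref{prop.gram bounds}. The single place requiring genuine care is where the hypothesis $r\geq 2$ is invoked---namely in producing a determinant-one matrix with $R_{11}=-1$, which necessitates a second sign flip to preserve orientation. There is no real obstacle beyond this bookkeeping; the substance of the theorem already lives in the two preceding results.
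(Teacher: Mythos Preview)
Your proof is correct and follows essentially the same route as the paper: feed \cref{prop.gram bounds} and \cref{thm.min sing scale upper} into \cref{thm.align preserve} for the upper bound, and use a zero-padding isometry for the lower bound. Two minor differences are worth noting. First, for the lower bound the paper pads $\mathbb R^{(r-1)\times n}$ rather than $\mathbb R^{1\times n}$, obtaining an isometric copy of $\mathbb R^{(r-1)\times n}/\O(r-1)$ inside $\mathbb R^{r\times n}/\SO(r)$ via the nuclear-norm distance formula; your rank-one embedding is more hands-on and equally valid. Second, your treatment of the edge case $n=r$ (where $\binom{n}{r}=1$) is more careful than the paper's: as stated, \cref{thm.align preserve} fixes $c=1$ when $\dim H=1$, which would produce the wrong scalar in front of $\psi$, and your zero-padding of the target cleanly forces $c=\sqrt{2}$.
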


\begin{proof}
We may combine \cref{thm.align preserve,prop.gram bounds,thm.min sing scale upper} to obtain the distortion bound $\kappa(F)\leq 2\sqrt{2}$, which in turn implies the upper bound on $c_2(\mathbb R^{r\times n}/\SO(r))$.
For the lower bound, note that the zero-padding map $\mathbb R^{(r-1)\times n}\hookrightarrow \mathbb R^{r\times n}$ induces an isometric embedding of quotient spaces $\mathbb R^{(r-1)\times n}/\Oname(r-1)\to \mathbb R^{r\times n}/\SO(r)$, and so \cref{prop.gram bounds} gives
\[
c_2(\mathbb R^{r\times n}/\SO(r))
\geq c_2(\mathbb R^{(r-1)\times n}/\Oname(r-1))
= \sqrt{2}.
\tag*{\qedhere}
\]
\end{proof}

The remainder of this section is dedicated to the proof of \cref{thm.min sing scale upper}.
The Cauchy--Binet formula implies
\begin{equation*}
\langle \psi(X), \psi(Y) \rangle 
= \sigma_{\min}(X)\sigma_{\min}(Y)\det(V_XV_Y^T),
\end{equation*}
and so the squared distance between scaled Pl\"{u}cker invariants is given by
\begin{equation}
\label{eq.plucker real normdiff}
\|\psi(X)-\psi(Y)\|^2 
= \sigma_{\min}(X)^2 + \sigma_{\min}(Y)^2 - 2\,\sigma_{\min}(X)\sigma_{\min}(Y) \det(V_XV_Y^T).
\end{equation}
We will compare this to the squared orbital distance
\begin{equation*}
\label{eq.dist so eq}
d_{\SO(r)}([X],[Y])^2 
= \min_{Q\in \operatorname{SO}(r)}\|QX-Y\|_F^2 
= \|X\|_F^2 + \|Y\|_F^2 - 2\max_{Q\in\operatorname{SO}(r)}\operatorname{Tr}\big(QXY^T\big).
\end{equation*}
Conveniently, the maximum on the right-hand side has a closed-form expression:

\begin{proposition}[Theorem~1 of~\cite{MirandaT:93}]
\label{prop.so max filter}
For every $M\in\mathbb R^{r\times r}$, it holds that
\begin{equation*}
\label{eq.max filter so eq}
\max_{Q\in \operatorname{SO}(r)}\operatorname{Tr}(QM) 
= \|M\|_* - 2\sigma_{\min}(M) \cdot\mathbf{1}_{\{\det(M)<0\}},
\end{equation*}
where $\|\cdot \|_*$ denotes the nuclear norm.
\end{proposition}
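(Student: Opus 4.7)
My approach is to reduce to the diagonal case via the singular value decomposition and then handle the two connected components of $\O(r)$ separately. Write $M = U\Sigma V^T$ with $U,V\in \O(r)$ and $\Sigma=\diag(\sigma_1,\dots,\sigma_r)$ for $\sigma_1\geq\dots\geq\sigma_r\geq 0$. Cycling the trace gives $\operatorname{Tr}(QM) = \operatorname{Tr}(R\Sigma)$ for $R := V^T Q U$, and as $Q$ ranges over $\SO(r)$, $R$ ranges over all orthogonal matrices of determinant $\epsilon := \det(U)\det(V)$. Since $\det(M) = \epsilon\,\sigma_1\cdots\sigma_r$, the sign $\epsilon$ coincides with $\operatorname{sgn}(\det M)$ whenever $\det M\neq 0$; and when $\sigma_r = 0$, a harmless modification of $U$ by a reflection (which preserves $U\Sigma$) lets us take $\epsilon = 1$. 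So it suffices to compute $\max\{\operatorname{Tr}(R\Sigma):R\in\O(r),\,\det R=\epsilon\}$ in each case.

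The case $\epsilon = +1$ is immediate: $|R_{ii}|\leq 1$ yields $\operatorname{Tr}(R\Sigma) = \sum_i R_{ii}\sigma_i \leq \sum_i\sigma_i = \|M\|_*$, with equality at $R = I\in\SO(r)$. The substantive case is $\epsilon = -1$, where I claim the maximum is $\|M\|_* - 2\sigma_r$. For the upper bound, I would decompose $\Sigma = (\Sigma - \sigma_r I) + \sigma_r I$ and bound each summand. Von Neumann's trace inequality (equivalently, duality between operator and nuclear norms, using that $R$ has operator norm $1$) gives $\operatorname{Tr}(R(\Sigma - \sigma_r I)) \leq \|\Sigma - \sigma_r I\|_* = \sum_{i<r}(\sigma_i - \sigma_r)$. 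An eigenvalue analysis then supplies $\operatorname{Tr}(R) \leq r-2$: the eigenvalues of $R$ lie on the unit circle, complex ones come in conjugate pairs of product $1$, the full product equals $\det R = -1$, so there must be an odd (hence positive) number of $-1$ eigenvalues, yielding $\operatorname{Tr}(R) = \sum_i\operatorname{Re}(\lambda_i) \leq (r-1)-1 = r-2$. Adding the two bounds and simplifying gives $\operatorname{Tr}(R\Sigma) \leq \sum_{i<r}\sigma_i - \sigma_r = \|M\|_* - 2\sigma_r$, and both estimates are simultaneously attained at $R = \diag(1,\dots,1,-1)$.

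Assembling: the maximum equals $\|M\|_*$ when $\det M\geq 0$ and $\|M\|_* - 2\sigma_{\min}(M)$ when $\det M<0$, which is exactly the claimed formula. The main obstacle is the simultaneous tightness of the two sub-bounds in the $\epsilon=-1$ case; this is precisely why one must subtract $\sigma_{\min}(M)$ (rather than some other $\sigma_i$) when decomposing $\Sigma$, so that the ``cheapest'' coordinate to flip in order to satisfy the determinant constraint is the same one that preserves equality in the von Neumann inequality.
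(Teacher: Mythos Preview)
Your proof is correct and takes a genuinely different route from the paper's. The paper first passes by continuity to the dense set where $M$ has distinct singular values, then uses a variational argument: at a maximizer $U$ of $Q\mapsto\Tr(Q\Sigma J)$ over $\SO(r)$, the first-order condition forces $U\Sigma J$ to be symmetric, and distinctness of the singular values then pins $U$ down to a diagonal sign matrix, reducing the problem to a finite maximization over sign patterns with an even number of $-1$s.

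Your argument is more elementary and fully self-contained: the decomposition $\Sigma=(\Sigma-\sigma_r I)+\sigma_r I$, with the von Neumann trace inequality on the first summand and the eigenvalue bound $\Tr(R)\le r-2$ on the second, handles all $M$ at once with no genericity assumption, no continuity limit, and no differentiation on the manifold. The paper's approach has the virtue of being a template that adapts to other compact-group trace maximizations, but for this particular statement your splitting trick is cleaner and shorter.
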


We provide a streamlined proof of \cref{prop.so max filter} in \cref{app.calc proof}.
In what follows, we make continual use of the resulting expression for the squared orbital distance:
\begin{equation}
\label{eq.dist so eq full}
d_{\SO(r)}([X],[Y])^2  
= \|X\|_F^2 + \|Y\|_F^2 - 2\|XY^T\|_* + 4\sigma_{\min}(XY^T) \cdot\mathbf{1}_{\{\det(XY^T)<0\}}.
\end{equation}

\begin{proof}[Proof of \cref{thm.min sing scale upper}]
For orbit expanding, take any $B \in \operatorname{O}(r)$ such that $\det(B)=-1$. 
Then $\det(V_XV_{BX}^T)=-1$, and so \eqref{eq.plucker real normdiff} gives
\[
\|\psi(X)-\psi(BX)\|^2
=\sigma_{\min}(X)^2 + \sigma_{\min}(BX)^2 + 2\sigma_{\min}(X)\sigma_{\min}(BX)
=4\sigma_{\min}(X)^2.
\]
Meanwhile, \eqref{eq.dist so eq full} gives
\[
d_{\SO(r)}([X],[BX])^2 
= \|X\|_F^2 + \|BX\|_F^2 - 2\|X(BX)^T\|_* + 4\sigma_{\min}(X(BX)^T)
=4\sigma_{\min}(X)^2.
\]
Next, for alignment preserving, consider any $B\in\O(r)$.
Then \eqref{eq.dist so eq full} implies
\begin{align*}
d_{\SO(r)}([X],[BY])\leq d_{\SO(r)}([X],[Y])
&\iff \det(XY^T) \leq 0 \\
&\iff \det(V_XV_Y^T)\leq 0 \\
&\iff \|\psi(X) - \psi(BY)\|
\leq \|\psi(X) - \psi(Y)\|,
\end{align*}
where the last step follows from \eqref{eq.plucker real normdiff}.

It remains to show that $\psi_{/\SO(r)}$ is $1$-Lipschitz.
Fix any $X,Y\in\mathbb{R}^{r\times n}$.
Then letting $\sigma_i(Z)$ denote the $i$th largest singular value of a matrix $Z$, \labelcref{eq.plucker real normdiff,eq.dist so eq full} give that it suffices to prove
\begin{align*}
&\sigma_r(X)^2 + \sigma_r(Y)^2 - 2\sigma_r(X)\sigma_r(Y)\det(V_XV_Y^T) \\
&\qquad\leq \sum_{i=1}^r \big(\sigma_i(X)^2 + \sigma_i(Y)^2\big) - 2\|XY^T\|_* + 4\sigma_r(XY^T) \cdot\mathbf{1}_{\{\det(XY^T)<0\}}.
\end{align*}
We will do so with the help of the von Neumann trace inequality for multiple matrices:
\[
\|XY^T\|_* 
= \max_{Q\in \O(r)}\operatorname{Tr}(Q\Sigma_X V_XV_Y^T \Sigma_Y) 
\leq \sum_{i=1}^r \sigma_i(X)\sigma_i(Y)\sigma_i(V_XV_Y^T),
\]
as well as the following multivariate version of Bernoulli's inequality:
\[
\prod_{i=1}^r(1+x_i)
\geq1+\sum_{i=1}^r x_i,
\]
which holds when all the $x_i$'s have the same sign and are at least $-1$.
First,
\[
\begin{aligned}
&\|XY^T\|_*-\frac{1}{2}\sum_{i=1}^{r-1}\big(\sigma_i(X)^2+\sigma_i(Y)^2\big)\\
&\leq\sum_{i=1}^r \sigma_i(X)\sigma_i(Y)\sigma_i(V_XV_Y^T)-\sum_{i=1}^{r-1}\sigma_i(X)\sigma_i(Y)&&\text{(von Neumann and AM--GM)}\\
&=\sigma_r(X)\sigma_r(Y)+\sum_{i=1}^r \sigma_i(X)\sigma_i(Y)\big(\sigma_i(V_XV_Y^T)-1\big)&&\text{(add zero)}\\
&\leq\sigma_r(X)\sigma_r(Y)\bigg(1+\sum_{i=1}^r \big(\sigma_i(V_XV_Y^T)-1\big)\bigg)&&\text{($\sigma_i\geq\sigma_r$ and $\sigma_i(V_XV_Y^T)-1\leq0$)}\\
&\leq\sigma_r(X)\sigma_r(Y)\prod_{i=1}^r\sigma_i(V_XV_Y^T)&&\text{(mulivariate Bernoulli)}\\
&=\sigma_r(X)\sigma_r(Y)|\det(V_XV_Y^T)|.
\end{aligned}
\]
If $\det(XY^T)\geq0$, then $\det(V_XV_Y^T)=|\det(V_XV_Y^T)|$, and so rearranging gives the desired inequality in this case.
If $\det(XY^T)<0$, then $\det(V_XV_Y^T)=-|\det(V_XV_Y^T)|$, and so
\begin{align*}
\|XY^T\|_*-\frac{1}{2}\sum_{i=1}^{r-1}\big(\sigma_i(X)^2+\sigma_i(Y)^2\big)
&\leq\sigma_r(X)\sigma_r(Y)|\det(V_XV_Y^T)|\\
&=\sigma_r(X)\sigma_r(Y)\det(V_XV_Y^T)+2\sigma_r(X)\sigma_r(Y)|\det(V_XV_Y^T)|\\
&\leq\sigma_r(X)\sigma_r(Y)\det(V_XV_Y^T)+2\sigma_r(XY^T),
\end{align*}
where the last step uses the facts that $\sigma_r(X)\sigma_r(Y)\leq\sigma_r(XY^T)$ and $|\det(V_XV_Y^T)|\leq1$.
Rearranging then gives the desired inequality in this case.
\end{proof}

\subsection{Alternating subgroups of reflection groups}
\label{sec.alternating}

Given a finite-dimensional real inner product space $V$, consider any finite group $R\leq\O(V)$ that is generated by reflections, i.e., orthogonal transformations that fix a hyperplane of codimension $1$. 
It is known that $c_2(V/R)=1$; see for example~\cite{MixonQ:22}.
In this section, we instead mod out by the \textbf{alternating subgroup} $R^+ := R \cap \SO(V)$. 

\begin{theorem}
\label{thm.alternating subgroup}
Given a reflection group $R\leq\O(V)$, let $n\in\mathbb{N}$ denote the largest entry in the corresponding Coxeter matrix.
Then
\[
n\sin(\tfrac{\pi}{2n})
\leq c_2(V/R^+)
\leq 2.
\]
In particular, the lower bound is at least $\sqrt{2}$ when $|R|>2$, and at least $\frac{3}{2}$ when $R$ is also irreducible as a reflection group.
\end{theorem}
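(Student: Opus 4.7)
The plan is to prove the two bounds separately, drawing on the machinery from \cref{sec.tools}. The upper bound follows from identifying $V/R^+$ as a glued space and invoking \cref{corr.glued space}, while the lower bound comes from \cref{thm.wednesday} applied at a $2$-plane associated with the maximum Coxeter entry.

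For the upper bound, let $F\subseteq V$ be a closed Weyl chamber of $R$, so that $F\cong V/R$ is an isometry and $c_2(V/R)=1$ (by Lemma~8 of~\cite{MixonP:22}). Let $Z\subseteq V/R$ denote the image of the union of reflection hyperplanes of $R$, corresponding to $\partial F$ under $F\cong V/R$. Since $R^+$ has index $2$ in $R$, any reflection $\sigma\in R\setminus R^+$ induces an isometric involution on $V/R^+$ with fixed set $Z$ and quotient $V/R$; a case analysis shows that the quotient $V/R^+\to V/R$ is $2$-to-$1$ away from $Z$ and $1$-to-$1$ over $Z$, and that its quotient metric agrees with the gluing metric on $(V/R)\sqcup_Z(V/R)$. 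The latter agreement is immediate within a single copy (the Weyl-chamber representatives realize the minimum distance), while for cross-copy pairs $[v_1]$ and $[\sigma v_2]$ with $v_1,v_2\in F$, the quotient distance $\min_{h\in R\setminus R^+}\|v_1-hv_2\|$ matches $\inf_{w\in\partial F}\|v_1-w\|+\|w-v_2\|$ by the reflection trick (for $\leq$, $\|v_1-sv_2\|=\|v_1-w\|+\|w-v_2\|$ when $w\in H_s$ lies on the segment). Granted this isometry, \cref{corr.glued space} gives $c_2(V/R^+)\leq \sqrt{2}\sqrt{c_2(V/R)^2+1}=2$.

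For the lower bound, let $s_i,s_j$ be generating simple reflections realizing $m_{ij}=n$, with reflection hyperplanes $H_i,H_j$, and set $P:=(H_i\cap H_j)^\perp$. The product $\rho:=s_is_j$ lies in $R^+$ and acts on $V=P\oplus P^\perp$ as a rotation of order exactly $n$ on $P$ and as the identity on $P^\perp=H_i\cap H_j$. The pointwise stabilizer $(R^+)_{P^\perp}$ therefore restricts to a finite subgroup of $\SO(P)\cong\SO(2)$ and so is cyclic: $(R^+)_{P^\perp}=C_m$ for some $m\geq n$. Applying \cref{thm.wednesday} with ambient $W=V$ and subspace $P^\perp$ yields
\[
c_2(V/R^+)\geq c_2(P/C_m)=m\sin(\tfrac{\pi}{2m})\geq n\sin(\tfrac{\pi}{2n}),
\]
where the equality is Corollary~38 of~\cite{CahillIM:24} (\cref{prop.known-distortions}) and the last inequality uses the monotonicity of $x\mapsto x\sin(\pi/(2x))$ on $[1,\infty)$. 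The consequences on $|R|$ and irreducibility follow from the Coxeter diagram: $|R|>2$ forces at least two generating reflections, hence at least one off-diagonal entry $\geq 2$, so $n\geq 2$ and the bound is $\geq\sqrt{2}$; irreducibility additionally forces the diagram to be connected and so (given more than one generator) some off-diagonal entry $\geq 3$, giving $n\geq 3$ and the bound $\geq 3/2$.

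The most delicate step is the metric identification $V/R^+\cong (V/R)\sqcup_Z(V/R)$ underlying the upper bound: the $\geq$ direction of the distance comparison (the quotient distance is at least the gluing distance) requires showing that for any $h\in R\setminus R^+$, the straight segment from $v_1\in F$ to $hv_2$ passes through a wall-point $w\in\partial F$ at which the reflection trick collapses $\|w-hv_2\|$ to $\|w-v_2\|$. This is natural from Coxeter-complex geometry but takes some care. An attractive alternative is to skip the gluing identification and apply \cref{thm.align preserve} directly, with $\phi$ the Weyl-chamber embedding $V/R\hookrightarrow V$ (of distortion $1$) and $\psi\colon V/R^+\to\mathbb{R}$ the signed distance to the reflection arrangement, where the sign is determined by the two-coloring of Weyl chambers under the $R/R^+$-action; since $R/R^+\cong\mathbb{Z}/2$ acts on $\mathbb{R}$ by $\pm1$ so $c=1$, verifying the orbit-expanding and alignment-preserving properties of $\psi$ (both of which reduce to the same reflection trick) yields the same bound of $2$.
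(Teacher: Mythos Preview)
Your approach matches the paper's: the upper bound via the glued-space identification $V/R^+\cong C\sqcup_{\partial C}C$ and \cref{corr.glued space}, and the lower bound via \cref{thm.wednesday} at the codimension-$2$ intersection of two simple-reflection hyperplanes together with the known distortion of $\mathbb{C}/C_m$. The paper carries out the metric identification you flag as delicate in a separate lemma (\cref{lem.weyl chamber glued space}), whose key steps are that the minimizer over $R\setminus R^+$ must be a \emph{simple} reflection $s_i$ and that $C\cup s_i(C)$ is convex---exactly the content your ``reflection trick'' sketch is gesturing at.

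One small difference in the lower bound is worth recording: the paper actually pins down $(R^+)_{P^\perp}=C_n$ exactly, by observing that the sector $P\cap C$ has opening angle $\pi/n$ and meets each $(R^+)_{P^\perp}$-orbit at most once, forcing $|(R^+)_{P^\perp}|<2n$ and hence $=n$. Your route---allowing $m\geq n$ and invoking the monotonicity of $x\mapsto x\sin(\pi/(2x))$---is a clean shortcut that suffices for the stated bound and avoids that extra geometric argument.
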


For example, if $V=\mathbb{R}^2$, then $R=D_n$ and $R^+=C_n$, and it follows from Corollary~38 in~\cite{CahillIM:24}~(see \cref{prop.known-distortions}) that $c_2(V/R^+)=n\sin(\tfrac{\pi}{2n})$.
In the general setting, the lower bound in \cref{thm.alternating subgroup} follows from reducing to this $2$-dimensional case by an application of \cref{thm.wednesday}.
It is unclear whether $c_2(V/R^+)$ always equals this lower bound, but we do not believe it ever equals the upper bound in \cref{thm.alternating subgroup}.
We derive this upper bound as a consequence of \cref{corr.glued space} after identifying $V/R^+$ with the space obtained by gluing two copies of a Weyl chamber $C\subseteq V$ of $R$ along the boundary.

In particular, let $\phi\colon V \to C$ denote the map that sends each $v\in V$ to the unique member of $(R\cdot v)\cap C$, and let $V_+$ (resp.\ $V_-$) denote the subset of $v\in V$ for which there exists $Q\in R^+$ (resp.\ $R\setminus R^+$) such that $Qv=\phi(v)$.
Both $V_+$ and $V_-$ are $R^+$-invariant, and we have $V = V_+\cup V_-$ with 
\[
V_+\cap V_- 
= \{v\in V: \phi(v)\in \partial C\}.
\]
Define an $R^+$-invariant map $\tilde F\colon V\to C\sqcup_{\partial C}C$ by setting $\tilde F|_{V_{\varepsilon}}(v) := (\phi(v), \varepsilon)$, and let $F\colon V/R^+\to C\sqcup_{\partial C}C$ denote the induced map.

\begin{lemma}
\label{lem.weyl chamber glued space}
The map $F\colon V/R^+\to C\sqcup_{\partial C}C$ is an isometry.
\end{lemma}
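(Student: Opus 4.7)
My approach is to verify directly that $F$ is a well-defined bijection, then split the isometry check into cases on the sign indices $\varepsilon_v, \varepsilon_w \in \{\pm\}$.

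First, $\tilde F$ is well-defined because on the overlap $V_+ \cap V_-$ the common value $\phi(v) \in \partial C$ makes $(\phi(v), +)$ and $(\phi(v), -)$ coincide in $C \sqcup_{\partial C} C$. It is $R^+$-invariant since $\phi$ is $R$-invariant and each $V_\varepsilon$ is $R^+$-stable (using $R^\varepsilon R^+ = R^\varepsilon$). Bijectivity: for surjectivity, $F([c]) = (c, +)$ and $F([\sigma c]) = (c, -)$ for any reflection $\sigma \in R^-$. For injectivity, if $F[v] = F[w]$ with $c := \phi(v) = \phi(w)$, write $Q_v v = c = Q_w w$ with $Q_v \in R^{\varepsilon_v}$ and $Q_w \in R^{\varepsilon_w}$; then in the same-sign case $Q_w^{-1} Q_v \in R^+$ identifies $v$ and $w$, while in the cross-sign case the shared value lies in $\partial C$, whose $R$-stabilizer contains a reflection, forcing the $R^+$-orbit through $c$ to equal the full $R$-orbit (so again $[v] = [w]$).

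For distance preservation, after replacing each point by an $R^+$-equivalent representative we may assume $v \in C$ whenever $v \in V_+$. When $\varepsilon_v = \varepsilon_w = +$, both $v, w \in C$, and since $C$ is a fundamental domain for $R$, we have $\min_{Q \in R} \|v - Qw\| = \|v - w\|$; restricting the minimum to $R^+$ leaves it unchanged (it is attained at $Q = I$), matching the target glued-space distance $\|v - w\|$. The same-sign $-$ case reduces to the previous one: any fixed reflection $\sigma_0 \in R^-$ induces an isometry $[\cdot] \mapsto [\sigma_0\, \cdot]$ of $V/R^+$ (using $R^+ \lhd R$) that swaps $V_+$- and $V_-$-orbits, while $\phi \circ \sigma_0 = \phi$ preserves the target side.

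The main obstacle is the cross-case $\varepsilon_v = +$, $\varepsilon_w = -$. Writing $w = \tau \phi(w)$ with $\tau \in R^-$, one computes
\[d_{V/R^+}([v],[w]) = \min_{Q \in R^+}\|v - Q\tau \phi(w)\| = \min_{Q' \in R^-}\|v - Q'\phi(w)\|,\]
which must be compared to the target $\inf_{z \in \partial C}(\|v - z\| + \|z - \phi(w)\|)$. For $\leq$: any $z \in \partial C$ sits on a reflection hyperplane $H_\sigma$ with $\sigma \in R^-$, so $\sigma z = z$ and the triangle inequality gives $\|v - \sigma\phi(w)\| \leq \|v - z\| + \|z - \sigma\phi(w)\| = \|v - z\| + \|z - \phi(w)\|$. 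For $\geq$: if $Q^* \in R^-$ attains the minimum, the straight segment from $v \in C$ to $Q^*\phi(w)$ must exit $\overline{C}$ at a first point $z \in \partial C$, giving $\|v - Q^*\phi(w)\| = \|v - z\| + \|z - Q^*\phi(w)\|$; since $\overline{C}$ is a fundamental domain for $R$ and both $z$ and $\phi(w)$ lie in $\overline{C}$, we have $\|z - \phi(w)\| \leq \|z - Q^*\phi(w)\|$, completing the bound.
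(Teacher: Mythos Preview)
Your proof is correct and in fact more streamlined than the paper's in the cross-sign case. Both arguments split into same-sign and opposite-sign cases and treat the same-sign case identically, via the fundamental-domain property $\min_{Q\in R}\|v-Qw\|=\|v-w\|$ for $v,w\in C$. For the opposite-sign case, the paper proves two auxiliary facts: (i) the minimum of $\|x-gy\|$ over $g\in R\setminus R^+$ is always attained at a \emph{simple} reflection $s_i$, via an exchange argument using the sets $k(z)=\{i:\langle z,\alpha_i\rangle<0\}$; and (ii) $C\cup s_i(C)$ is convex, which lets one split the segment from $x$ to $s_i(y)$ at a point of $\partial C\cap H_i$. You bypass both. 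For the $\le$ direction, any $z\in\partial C$ lies on a wall fixed by some reflection $\sigma\in R^-$, and the triangle inequality together with $\sigma z=z$ gives the bound directly. For the $\ge$ direction, the segment from $v\in C$ to $Q^*\phi(w)\notin C$ hits $\partial C$ at some $z$, and then the fundamental-domain inequality $\|z-\phi(w)\|\le\|z-Q^*\phi(w)\|$ (valid because $z,\phi(w)\in C$) finishes the job. This is a genuinely shorter route; what the paper's argument buys is the extra structural fact that the minimizer over $R^-$ is always a simple reflection, which you do not need.

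One small point to make explicit: your first-exit argument uses $Q^*\phi(w)\notin C$, which requires $\phi(w)\in\operatorname{int}(C)$, i.e.\ $w\in V_-\setminus V_+$. This is harmless since if $\phi(w)\in\partial C$ then $(\phi(w),-)=(\phi(w),+)$ in the glued space and the cross case collapses to the same-sign case already handled; but it is worth saying so (the paper does restrict to $u\in V_+\setminus V_-$, $v\in V_-\setminus V_+$ at this step for exactly this reason).
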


We prove \cref{lem.weyl chamber glued space} after first using it to prove \cref{thm.alternating subgroup}.

\begin{proof}[Proof of \cref{thm.alternating subgroup}]
The lower bound is trivial when $n=1$, so we may assume $n\geq 2$.
Consider a rotation $Q\in R^+$ of order $n$, expressed as the composition of two distinct reflections across hyperplanes $H$ and $H'$ that bound a common Weyl chamber $C$ and intersect at an angle of $\frac{\pi}{n}$ radians. 
Then the $1$-eigenspace $U:= H\cap H'$ of $Q$ has codimension $2$, so its pointwise stabilizer $G_U\leq R^+$ restricts isomorphically to a subgroup of $\operatorname{SO}(U^\perp)\cong\operatorname{SO}(2)$, and is therefore generated by a rotation with angle $\frac{2\pi}{|G_U|}$ radians. 
Moreover, the intersection $U^\perp \cap C$ is a two-dimensional closed cone with angle $\frac{\pi}{n}$ radians, and it contains at most one representative from each $G_U$-orbit. 
In particular, $|G_U|<2n$. 
Since $Q\in G_U$ has order $n$, it follows that $G_U = \langle Q\rangle$.
Then \cref{thm.wednesday} and Corollary~38 in~\cite{CahillIM:24}~(see \cref{prop.known-distortions}) together imply
\[
c_2(V/R^+) 
\geq c_2(U^\perp/\langle Q\rangle) 
= n\sin(\tfrac{\pi}{2n}).
\]
For the upper bound, \cref{lem.weyl chamber glued space,corr.glued space} together imply
\[
c_2(V/R^+)
\leq c_2(C\sqcup_{\partial C}C)
\leq \sqrt{2}\cdot\sqrt{c_2(C)^2+1}
= 2.
\]
Finally, the ``in particular'' claims follow from Coxeter's classification.
Indeed, $|R|>2$ implies $n\geq 2$, and any irreducible $R$ with $|R|>2$ has $n\geq 3$.
\end{proof}



    

\begin{figure}
    \centering
    \includegraphics[width=0.3\linewidth]{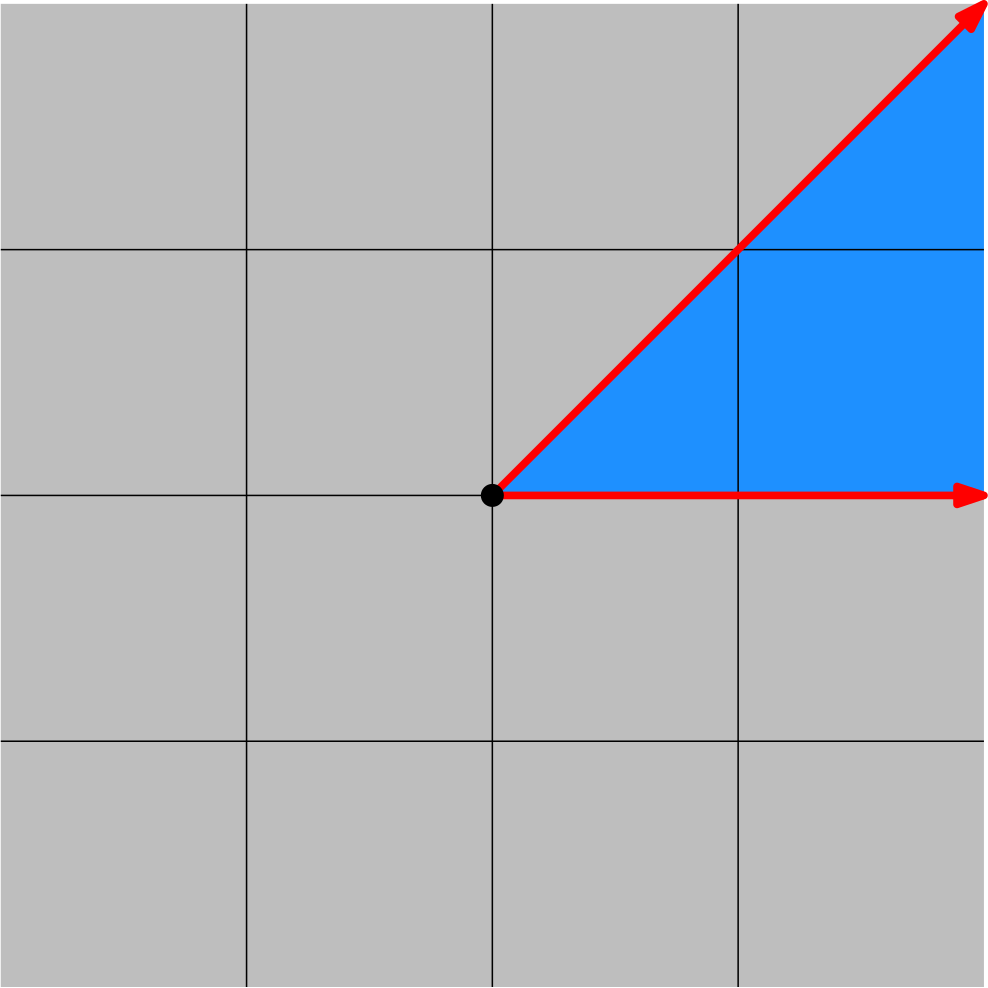}%
    \qquad\qquad\qquad%
    \includegraphics[width=0.4\linewidth]{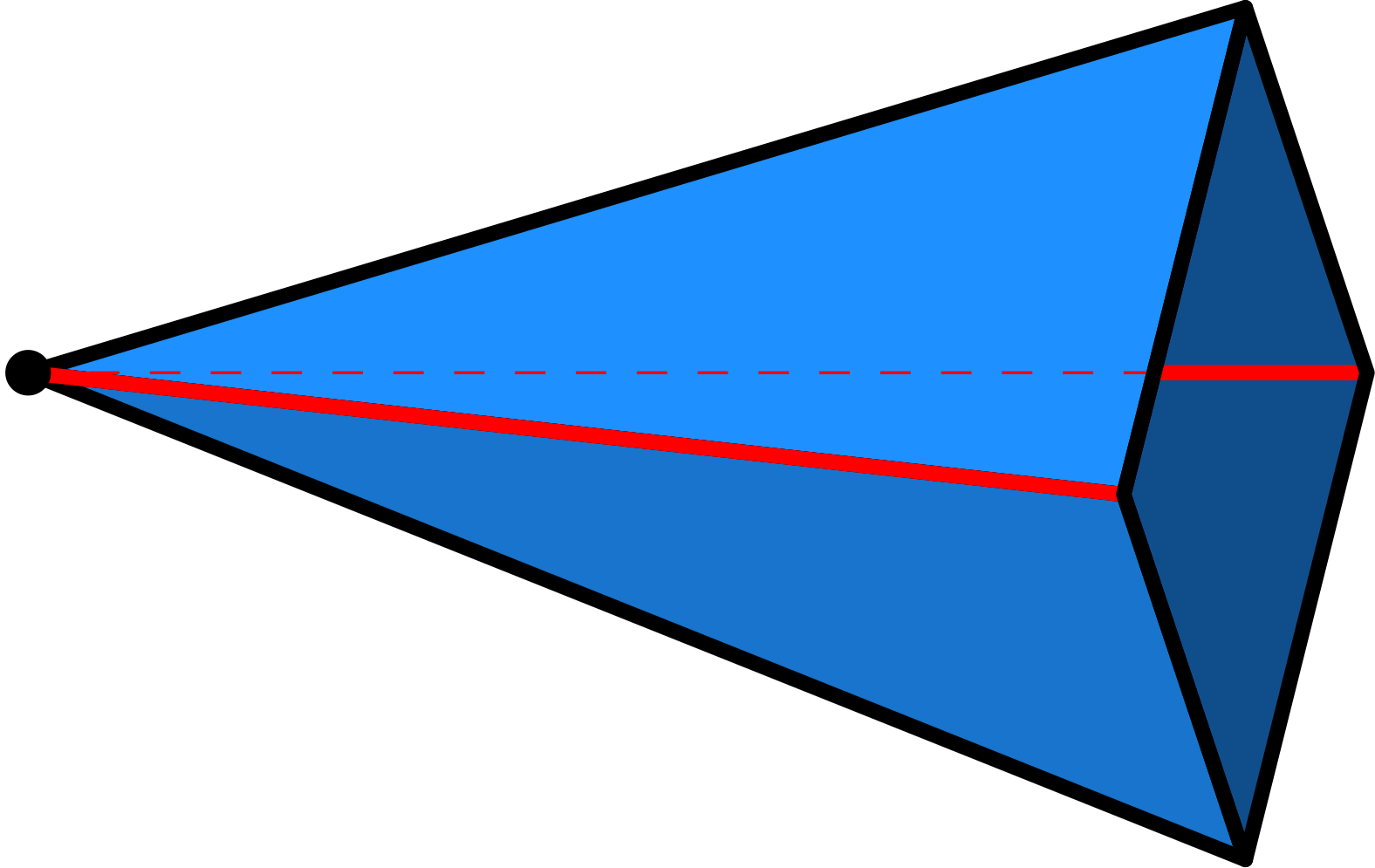}
    \caption{\textbf{(left)} A Weyl chamber $C$ for an action of $D_4$, the dihedral group with 8 elements, on $\mathbb R^2$. We highlight the boundary of the Weyl chamber in red. \textbf{(right)} An embedding of the quotient $\mathbb R^2/D_4^+$ in $\mathbb{R}^3$. This embedding is obtained by identifying $\mathbb R^2/D_4^+ \cong C\sqcup_{\partial C} C$ and applying \cref{corr.glued space}, as we do in Section~\ref{sec.alternating}. The boundary $\partial C$ along which the two copies of $C$ are glued is again highlighted in red. While the true embedded quotient is unbounded, the embedding in the figure is cut to show the shape of the cross-section.}
    \label{fig:glass}
\end{figure}

\begin{proof}[Proof of \cref{lem.weyl chamber glued space}]
Fix $u,v\in V$. 
If $u,v\in V_+$ or $u,v\in V_-$, then there exist $w,w'\in R$ with $\det(w)=\det(w')$ such that $\phi(u)=wu$ and $\phi(v) = w'v$.
Since $R\cdot x\mapsto\phi(x)$ defines an isometric embedding of $V/R$, it follows that
\[
\begin{aligned}
\min_{g\in R}\|u-gv\| 
= \|\phi(u)-\phi(v)\|
&= \|wu-w'v\| \\
&= \|u - w^{-1}w'v\| 
\geq \min_{g\in R^+}\|u-gv\| 
\geq \min_{g\in R}\|u-gv\|,
\end{aligned}
\]
meaning the inequalities are equalities, i.e.,
\[
d_{C\sqcup_{\partial C}C}\big(F([u]),F([v])\big)
= \|\phi(u)-\phi(v)\| 
= d_{V/R^+}([u],[v]).
\]

Now suppose $u\in V_+\setminus V_-$ and $v\in V_-\setminus V_+$. 
Consider the unit vectors $\alpha_1,\ldots,\alpha_m$ that are orthogonal to the facets of $C$ and point into $C$, and let $s_i:=\operatorname{Id}_V-2\langle \cdot,\alpha_i\rangle\alpha_i\in R\setminus R^+$ denote the corresponding fundamental reflections.
Taking $x:= \phi(u)$, $y:= \phi(v)$, then $x,y\in \operatorname{int}(C)$, and it suffices to show the following equalities:
\begin{equation*}
\label{eq.proof alternating}
\underbrace{\min_{g\in R\setminus R^+}\|x - g\cdot y\|}_{d_{V/R^+}([u],[v])}
\overset{\text{(a)}}{=} \min_{i\in [m]}\|x - s_i(y)\| 
\overset{\text{(b)}}{=} \underbrace{\inf_{c\in\partial C}\big(\|x-c\|+\|c-y\|\big)}_{d_{C\sqcup_{\partial C}C}(F([u]),F([v]))}.
\end{equation*}
We will establish (a) and (b) by first reducing to two claims that are perhaps more interpretable.
Given any $z\in V$, consider the index subset 
\[
k(z)
:= \{i\in [m]:\langle z,\alpha_i\rangle < 0\}.
\]
Notably, $k(z)=\varnothing$ if and only if $z\in C$. We claim the following:
\begin{enumerate}[label=(\roman*)]
\item For each $z\in V$, if $i\in k(z)$, then $\|x-s_i(z)\|<\|x-z\|$.
\item For each $i\in [m]$, the set $C\cup s_i(C)$ is convex.
\end{enumerate}
In what follows, we demonstrate (i)$\Rightarrow$(a) and (ii)$\Rightarrow$(b), and then (i) and (ii).

For (i)$\Rightarrow$(a), since $s_i\in R\setminus R^+$ for every $i\in[m]$, we have
\[
\min_{g\in R\setminus R^+}\|x - g\cdot y\| 
\leq \min_{i\in [m]}\|x - s_i(y)\|.
\]
For the reverse inequality, take
\begin{equation}
\label{eq.g star}
g_*
\in \argmin_{g\in R\setminus R^+}\|x - g\cdot y\|.
\end{equation}
Then $g_*y\in \operatorname{int}(g_*C)\subseteq C^c$, and so $k(g_*y) \neq \varnothing$. 
Pick any $i\in k(g_*y)$. 
Then $k(s_i(g_*y))=\varnothing$, since otherwise, assuming $j\in k(s_i(g_*y))$, we use \eqref{eq.g star} followed by two applications of (i) to obtain the contradiction:
\begin{align*}
\|x - g_*\cdot y\|\leq \|x-(s_js_i g_*)\cdot y\|< \|x-(s_i g_*)\cdot y\|< \|x-g_*\cdot y\|.
\end{align*}
Hence, $s_i(g_*y) = y\in \operatorname{int}(C)$, and since $R$ acts freely on $\bigsqcup_{g\in R}\operatorname{int}(gC)$ (e.g., see \cite[Theorem~4.2.4]{GroveB:96}), it follows that $s_ig_* = \Id_V$.
Thus, $g_* = s_i^{-1}=s_i$, as desired.

For (ii)$\Rightarrow$(b), fix $i\in [m]$, and let $H_i:= \operatorname{span}\{\alpha_{i}\}^\perp$ denote the reflection hyperplane of $s_{i}$.
Then $C\cap H_i = s_i(C)\cap H_i =\partial C\cap H_i$, and as a consequence of (ii) and $\langle x,\alpha_i\rangle > 0 > \langle s_i(y),\alpha_i\rangle$, we have
\[
\|x-s_{i}(y)\| 
= \min_{h\in \partial C\cap H_i}\big(\|x-h\|+\|h-s_i(y)\|\big).
\]
Furthermore, $\|h-s_i(y)\|=\|s_i^{-1}(h)-y\|=\|h-y\|$, and so
\[
\min_{i\in[m]}\|x-s_{i}(y)\| 
= \min_{i\in[m]}\min_{h\in \partial C \cap H_i}\big(\|x-h\|+\|h-y\|\big).
\]
Then (b) follows since $\bigcup_{i\in[m]}(\partial C\cap H_i)=\partial C$.

For (i), fix $z\in V$ and $i\in k(z)$. 
Then $s_i(z)-z=-2\langle z,\alpha_i\rangle\alpha_i$, and so
\[
\langle x,s_i(z)\rangle - \langle x,z\rangle 
=\langle x,s_i(z)-z\rangle
= -2\langle z,\alpha_i\rangle\langle x,\alpha_i\rangle 
> 0,
\]
where the last step uses the assumptions that $i\in k(z)$ and $x\in \operatorname{int}(C)$.
Since $\|s_i(z)\|=\|z\|$, it follows that $\|x-s_i(z)\|<\|x-z\|$, as claimed.

For (ii), fix $i\in[m]$, and consider the convex polyhedral cone
\[
K
:= \big\{z\in V:\langle z,\alpha_j\rangle\geq 0 \text{ and } \langle z,s_i(\alpha_j)\rangle \geq 0,~\forall\, j\in [m]\setminus\{i\}\big\}.
\]
It suffices to show that $C\cup s_i(C)=K$.
To this end, recall that
\begin{align*}
C
&=\{z\in V:\langle z,\alpha_j\rangle\geq 0,~\forall\, j\in [m]\},\\
s_i(C) 
&= \{z\in V:\langle z,s_i(\alpha_j)\rangle\geq 0,~\forall \, j\in[m]\}.
\end{align*}
For $K\subseteq C\cup s_i(C)$, note that $s_i(\alpha_i) = -\alpha_i$, and so each $z\in K$ either resides in $C$ or $s_i(C)$, depending on whether $\langle z,\alpha_i\rangle$ is nonnegative or nonpositive.
For $C\cup s_i(C)\subseteq K$, it suffices to show $s_i(C)\subseteq K$ since $K$ is invariant to the action of $s_i$. 
Given $z\in s_i(C)$, we have $\langle z,s_i(\alpha_j)\rangle\geq 0$ for all $j\in[m]$. Moreover, the walls of $C$ meet at acute angles, i.e., $\langle\alpha_i,\alpha_j\rangle\leq 0$ for each $j\in[m]\setminus\{i\}$ (e.g., see~\cite[Proposition~4.1.5]{GroveB:96}). 
Thus, for each $j\in[m]\setminus\{i\}$, we have
\[
0
\leq \langle z,s_i(\alpha_j)\rangle  
= \langle z,\alpha_j\rangle - 2\langle \alpha_i,\alpha_j\rangle \langle z,\alpha_i\rangle,
\]
and so $\langle z,\alpha_j\rangle \geq 2\langle \alpha_i,\alpha_j\rangle \langle z,\alpha_i\rangle=-2\langle \alpha_i,\alpha_j\rangle \langle z,s_i(\alpha_i)\rangle \geq 0$, as desired.
\end{proof}

\subsection{Wallpaper groups}
\label{sec.tran klien}

In this section, we embed quotients of the plane by the wallpaper groups, namely, the discrete subgroups $G\leq \E(2)$ such that $\mathbb R^2/G$ is compact. 
There are several familiar spaces among these quotients, including flat tori, flat Klein bottles, and the M\"obius strip.
To estimate the Euclidean distortions of these spaces, we make frequent use of \cref{lem.upsilon compute,corr.glued space,thm.isotropy}.
While there are multiple notation systems available for describing the wallpaper groups, we use the \textit{orbifold signature notation} introduced by MacBeath \cite{MacBeath:67}, in the form popularized by Conway \cite{Conway:02, Conway:16}.

In any wallpaper group $G$, the set $T_G:=G\cap\mathbb{R}^2$ of translation vectors forms a discrete rank-$2$ lattice in $\mathbb R^2$. 
All but two of the $17$ classes of wallpaper groups exhibit a key structural constraint: the translation lattice of any group in any of these classes is constrained in shape---either having fixed angles (as in rectangular and hexagonal lattices) or generators of equal length (as in rhombic lattices). 
The two exceptional classes are type $o$, in which the group consists of translations by an arbitrary lattice (yielding a flat torus quotient), and type $2222$, which is generated over a group of type $o$ by a $\pi/2$ rotation (producing a quotient of the flat torus by a group of order two).

In \cref{sec.sub flat tori}, we reviewed known results on the Euclidean distortions of quotients by type $o$ wallpaper groups. The best known general upper bound and the only known exact distortion values are summarized in the following proposition.
\begin{proposition}\label{thm:torus-distortion}
Let $G\leq E(2)$ be a wallpaper group of type $o$. Then the following hold:
\begin{itemize}
\item[(a)]
In general, $c_2(\mathbb R^2/G) < 8$.
\item[(b)] 
If $T_G$ is a rectangular lattice, then $c_2(\mathbb R^2/G) = \frac{\pi}{2}$.
\item[(c)] 
If $T_G$ is a hexagonal lattice, then $c_2(\mathbb R^2/G) = \frac{\sqrt{8}\pi}{\sqrt{27}}$.
    \end{itemize}
\end{proposition}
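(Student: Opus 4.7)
The plan is to unpack the definitions and assemble the relevant results already surveyed in \cref{sec.sub flat tori}. A wallpaper group $G$ of type $o$ has trivial point group, so $G = T_G$ consists entirely of translations by a rank-$2$ lattice, and $\mathbb R^2/G$ is a flat torus. With this identification, each of the three claims reduces to a known statement about $c_2(\mathbb R^2/T)$ for a translation lattice $T$.

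For part (a), I would simply invoke \cite[Theorem~3]{HavivR:10}, which supplies the universal bound $c_2(\mathbb R^2/T) < 8$ for every two-dimensional translation lattice; apply it with $T = T_G$.

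For part (b), the rectangularity hypothesis means $T_G$ admits an orthogonal basis $v_1, v_2$, so that $T_G = \mathbb Z v_1 \oplus \mathbb Z v_2$ with respect to the orthogonal decomposition $\mathbb R^2 = \mathbb R v_1 \oplus \mathbb R v_2$. This decomposition descends to an isometry
\[
\mathbb R^2/T_G \;\cong\; (\mathbb R v_1/\mathbb Z v_1) \times (\mathbb R v_2/\mathbb Z v_2),
\]
expressing the quotient as a product of two (rescaled) circles. Then \cref{prop.max-product} converts this into $c_2(\mathbb R^2/T_G) = \max\{c_2(\mathbb R v_i/\mathbb Z v_i) : i = 1, 2\}$. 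Since Euclidean distortion is invariant under rescaling the metric, each factor equals $c_2(S^1) = \tfrac{\pi}{2}$ by \cite[Theorem~6.1]{HeimendahlLVZ:22}, and the conclusion $c_2(\mathbb R^2/G) = \tfrac{\pi}{2}$ follows.

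For part (c), the assumption that $T_G$ is hexagonal means $T_G$ equals a positive scalar multiple of a rotation of the standard hexagonal lattice $A_2$. Since Euclidean distortion is invariant both under rescaling and under the action of $\mathrm{O}(2)$, we have $c_2(\mathbb R^2/T_G) = c_2(\mathbb R^2/A_2)$, and the value $\tfrac{\sqrt 8\,\pi}{\sqrt{27}}$ is then provided directly by \cite[Theorem~4.1(2)]{VallentinM:23}. Overall, there is no substantive obstacle: all three parts are citations, with the only manipulation being the orthogonal product decomposition in part~(b) that allows \cref{prop.max-product} to be applied.
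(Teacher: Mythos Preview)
Your proposal is correct and matches the paper's treatment: the proposition is stated as a summary of cited results from the literature review in \cref{sec.sub flat tori}, without a separate proof, and you have identified exactly the references the paper attributes each part to (\cite[Theorem~3]{HavivR:10} for (a), the product-of-circles argument via \cref{prop.max-product} and \cite[Theorem~6.1]{HeimendahlLVZ:22} for (b), and \cite[Theorem~4.1(2)]{VallentinM:23} for (c)).
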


For each of the remaining $16$ wallpaper group types, we either precisely determine or explicitly bound the Euclidean distortions of the corresponding quotient spaces. 
In the case of type $2222$ groups, the underlying translation lattice may be oblique, so we resort to bounding the distortion using \cref{thm:torus-distortion}(a).
For the remaining $15$ types, we obtain sharper bounds or exact values for $c_2(\mathbb R^2/G)$, all of which are at most $\pi$ and expressed in more explicit terms.

Many of our lower bounds for distortion will come from the application of \cref{thm.isotropy} to a rotation center, so we quickly codify this strategy as a lemma.

\begin{lemma}\label{lem.rotation_center}
Let $G$ be a wallpaper group. 
If the action of $G$ has a rotation center of order $r$ that is not on a reflection line (that is, if the orbifold signature of $G$ has an $r$ which is not to the right of any $\ast$), then
\[
c_2(\mathbb R^2/G) 
\geq r\sin(\tfrac{\pi}{2r}).
\]
\end{lemma}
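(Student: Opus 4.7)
The plan is to invoke \cref{thm.isotropy} at a rotation center $p$ of order $r$, reducing the problem to a distortion computation on the tangent space.

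First, I would verify the hypotheses of \cref{thm.isotropy}. Wallpaper groups are discrete subgroups of $\E(2)$ and act properly discontinuously on $\mathbb{R}^2$; in particular, the action is wandering in the sense required. Next, I would identify the stabilizer $G_p$. Since $p$ is a rotation center of order $r$, $G_p$ contains a rotation subgroup isomorphic to $C_r$. The hypothesis that $p$ does not lie on any reflection line prevents $G_p$ from containing any reflection, since an orthogonal reflection fixing $p$ would place $p$ on its axis. (Glide reflections have no fixed points, so they cannot contribute to $G_p$.) Hence $G_p \leq \SO(2)$ is a finite cyclic group containing $C_r$, and because $r$ is by definition the order of the local rotation group at $p$, we conclude $G_p = C_r$.

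Next, I would identify the isotropy representation. The differential at $p$ of a rotation about $p$ by angle $\theta$ is rotation by $\theta$ in $T_p\mathbb{R}^2$, so under the identification $T_p\mathbb{R}^2 \cong \mathbb{C}$, the representation $d_p\colon C_r \to \Oname(T_p\mathbb{R}^2)$ becomes scalar multiplication by $r$-th roots of unity, i.e., the standard action of $C_r$ on $\mathbb{C}$.

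Finally, I would chain together
\[
c_2(\mathbb{R}^2/G) \;\geq\; c_2(T_p\mathbb{R}^2/G_p) \;=\; c_2(\mathbb{C}/C_r) \;=\; r\sin(\tfrac{\pi}{2r}),
\]
where the first inequality is \cref{thm.isotropy}, and the last equality is the $n=1$ case of \cref{thm.root unity} (equivalently, Corollary~38 in~\cite{CahillIM:24}). There is no genuine obstacle here: the only point requiring care is the identification $G_p = C_r$, which relies crucially on the ``not on a reflection line'' assumption; everything else is a direct application of the tools already developed in \cref{sec.isotropy} and \cref{sec.root unity}.
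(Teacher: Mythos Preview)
The proposal is correct and follows essentially the same approach as the paper: apply \cref{thm.isotropy} at the rotation center $p$, identify $G_p\cong C_r$ using the hypothesis that $p$ lies on no reflection line, and then invoke the known value $c_2(\mathbb{C}/C_r)=r\sin(\tfrac{\pi}{2r})$. Your write-up is in fact more careful than the paper's in explicitly ruling out reflections and glide reflections from $G_p$.
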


\begin{proof}
Suppose the action of $G$ has a rotation center of order $r$ at $p\in\mathbb R^2$ which is not on any reflection line. 
Then the stabilizer $G_p$ is isomorphic to $C_r$, and it acts on $T_p\mathbb R^2$ by rotations by integer multiples $\pi/r$.
Corollary 36 in \cite{CahillIM:24} (see \cref{prop.known-distortions}) and \cref{thm.isotropy} together give the desired lower bound.
\end{proof}

\begin{theorem}
\label{thm.wallpaper groups}
Let $G\leq \E(2)$ be a wallpaper group.
    \begin{enumerate}[label=(\alph*)]
        \item\label{it:reflections} If $G$ is of type ${\ast}333$, ${\ast}442$, $*632$, or ${\ast}2222$, then $c_2(\mathbb R^2/G) = 1$.
        \item\label{it:cylinder} If $G$ is of type ${\ast}{\ast}$, then $c_2(\mathbb R^2/G) = \frac{\pi}{2}$.
        \item\label{it:half-spin} If $G$ is of type $2{\ast}22$, then $c_2(\mathbb R^2/G) = \sqrt{2}$.
        \item\label{it:quarter-spin} If $G$ is of type $4{\ast}2$, then $c_2(\mathbb R^2/G) = 2\sqrt{2-\sqrt{2}}$.
        \item\label{it:klein} If $G$ is of type ${\times}{\times}$ (that is, if $\mathbb R^2/G$ is a flat Klein bottle), then $\frac{\pi}{2} \leq c_2(\mathbb R^2/G) \leq \frac{\pi}{\sqrt{2}}$.
        \item\label{it:football} If $G$ is of type $22{\times}$, then $\pi/2\leq c_2(\mathbb R^2/G) \leq \pi$.
        \item\label{it:torus-cover} If $G$ is of type $2222$, then $\sqrt{2}\leq c_2(\mathbb R^2/G)\leq \sqrt{2}\cdot c_2(\mathbb R^2/T_G) < 8\sqrt{2}$. 
        \item\label{it:mobius} If $G$ is of type ${\ast}{\times}$, (that is, if $\mathbb R^2/G$ is a M\"obius strip) then $\frac{\pi}{2}\leq c_2(\mathbb R^2/G)\leq \frac{\pi}{\sqrt{2}}$.
        \item\label{it:glass-2} If $G$ is of type $22{\ast}$, then $\sqrt{2}\leq c_2(\mathbb R^2/G)\leq 2$.
        \item\label{it:glass-3} If $G$ is of type $333$ or $3{\ast}3$, then $\frac{3}{2}\leq c_2(\mathbb R^2/G)\leq 2$.
        \item\label{it:glass-4} If $G$ is of type $442$, then $2\sqrt{2-\sqrt{2}}\leq c_2(\mathbb R^2/G)\leq 2$.
        \item\label{it:glass-6} If $G$ is of type $632$, then $3\sqrt{2-\sqrt{3}}\leq c_2(\mathbb R^2/G)\leq 2$.
    \end{enumerate}
\end{theorem}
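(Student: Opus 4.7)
The plan is to dispatch the twelve cases by combining the tools assembled earlier: the reflection-group baseline $c_2(\mathbb R^2/R)=1$ from \cref{prop.known-distortions}; the contortion bound $c_2(X/G)\leq \Upsilon(G/N)\cdot c_2(X/N)$ from \cref{thm.upsilon-bound} with $\Upsilon(C_2)=\sqrt 2$ from \cref{lem.upsilon compute}; the glued-space inequality \cref{corr.glued space}; the product formula \cref{prop.max-product}; the flat-torus estimate \cref{thm:torus-distortion}; and the rotation-center lower bound \cref{lem.rotation_center}. I would group the cases by which tool delivers the upper bound, and then harvest matching (or near-matching) lower bounds from \cref{lem.rotation_center} or from isometric inclusions of $S^1$.

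Parts (a) and (b) are immediate. In (a), each listed group is generated by reflections, so $c_2=1$. In (b), the two reflection lines of $**$ are parallel, so the fundamental region factors as $S^1\times[0,h]$ and \cref{prop.max-product} yields $c_2=\max\{\pi/2,1\}=\pi/2$; the isometric inclusion of $S^1$ provides the matching lower bound. Parts (c), (e), (f), (g), (h) reduce to the contortion bound along an index-$2$ normal subgroup $N$ with $G/N=C_2$: in (c), $N$ is the reflection subgroup of type $*2222$; in (e), $N$ is the rectangular torus subgroup from (b) (the two glides of $\times\times$ are parallel, so the translation lattice is rectangular); in (h), $N$ is the cylinder subgroup from (b); in (f), $N$ is the Klein-bottle subgroup from (e); in (g), $N$ is the full translation lattice and one cites \cref{thm:torus-distortion}(a). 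Matching lower bounds come from \cref{lem.rotation_center} or from an isometric circle inclusion, whichever is larger.

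The remaining upper bounds, in parts (i), (j), (k), (l), rely on glued-space decompositions via \cref{corr.glued space}. For the chiral rotation groups $333$, $442$, $632$ in (j)--(l), the orbit space $\mathbb R^2/G$ is exactly the doubling of the Weyl triangle of the corresponding reflection group $*333$, $*442$, $*632$ along its triangular boundary, with the natural $C_2$ action swapping the two copies; since each Weyl triangle is flat convex with $c_2=1$, \cref{corr.glued space} yields an upper bound of $\sqrt{2}\sqrt{1^2+1}=2$. The same doubling idea dispatches $3*3$ in (j) and $22*$ in (i): one exhibits an orientation-reversing isometric involution of the orbit space whose fixed locus bisects it into a flat convex half $Y$ with $c_2(Y)=1$, and then applies \cref{corr.glued space}. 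The matching lower bounds $3/2$, $2\sqrt{2-\sqrt 2}$, $3\sqrt{2-\sqrt 3}$, and $\sqrt 2$ for (j), (k), (l), and (i) all come from \cref{lem.rotation_center} applied to the rotation of highest order.

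The main obstacle is part (d). The $4$-fold rotation gives the lower bound $4\sin(\pi/8)=2\sqrt{2-\sqrt 2}$ via \cref{lem.rotation_center}, but this strictly exceeds what either the contortion bound ($\Upsilon(C_4)\leq 2$) or the glued-space inequality with a flat half ($\sqrt{2}\sqrt{1^2+1}=2$) can generically deliver, so neither suffices for the matching upper bound. My plan is to build a bespoke quotient--orbit embedding in the spirit of \cref{thm.root unity}: use the $*2222$ reflection-group embedding $\mathbb R^2/*2222\to\mathbb R^2$ (distortion $1$) weighted by $\cos(\pi/8)$, then adjoin a reflection-invariant scalar $\sin(\pi/8)\cdot\mathrm{Re}(z^4)/|z|^3$ modeled on the $\sin(\pi/2r)\cdot z^r/|z|^{r-1}$ coordinate of \cref{thm.root unity}. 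Verifying that the resulting map has distortion exactly $2\sqrt{2-\sqrt 2}$ would reduce to a trigonometric optimization on a fundamental domain of $4{*}2$, parallel to the one carried out in the proof of \cref{thm.root unity}.
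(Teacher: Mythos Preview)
Your plan for cases (a), (b), (e)--(l) matches the paper's approach essentially verbatim, including the choice of index-$2$ normal subgroups, the glued-space decompositions for the rotation-only groups, and the source of each lower bound.

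The gap is in (d). You correctly observe that neither the contortion bound $\Upsilon(C_4)\leq 2$ nor the glued-space bound gives the sharp value $2\sqrt{2-\sqrt 2}$, but your proposed bespoke quotient--orbit embedding is unnecessary. The trick is already present in your treatment of (c), but you weakened it: in (c) you invoked the contortion inequality $c_2(\mathbb R^2/G)\leq\Upsilon(C_2)\cdot c_2(\mathbb R^2/N)$, whereas the paper instead uses the second equivariant embedding lemma directly. For $2{\ast}22$, the rectangle $\mathbb R^2/N$ embeds isometrically and $C_2$-equivariantly into $\mathbb R^2$ (centered at the origin), so the induced map $\mathbb R^2/G\to\mathbb R^2/C_2$ is an \emph{isometric} embedding, giving $c_2(\mathbb R^2/G)\leq c_2(\mathbb R^2/C_2)=\sqrt 2$. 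This happens to coincide with the contortion bound for $C_2$, so you did not notice the loss.

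For $4{\ast}2$ the same idea works with $C_4$ in place of $C_2$: the reflection subgroup $N$ is normal of index $4$ (not $2$), of type ${\ast}2222$, and because the translation lattice of $4{\ast}2$ is necessarily square, the quotient $\mathbb R^2/N$ is isometric to a square. Embed this square isometrically into $\mathbb R^2$ centered at the origin; this embedding is $C_4$-equivariant, so by \cref{lem:quotient-distortion} it descends to an isometric embedding $\mathbb R^2/G\hookrightarrow\mathbb R^2/C_4$. Then \cref{thm.root unity} (or \cref{prop.known-distortions}(c)) gives $c_2(\mathbb R^2/C_4)=4\sin(\pi/8)=2\sqrt{2-\sqrt 2}$, matching your lower bound exactly. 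No new trigonometric optimization is needed.
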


\begin{proof} \cref{lem.rotation_center} provides the lower bounds for cases~\ref{it:half-spin}, \ref{it:quarter-spin},  \ref{it:torus-cover}, \ref{it:glass-2}, \ref{it:glass-3}, \ref{it:glass-4}, and \ref{it:glass-6}. We tackle the remaining bounds case by case.

\medskip

\noindent\ref{it:reflections}: 
Groups of these types are affine reflection groups, so the quotients $\mathbb R^2/G$ are all isometric to fundamental domains of their actions. 
Hence, they embed isometrically in $\mathbb R^2$.

\medskip
\noindent\ref{it:cylinder}: 
A quotient of this type is a flat cylinder, which means
\[
c_2(\mathbb R^2/G) 
= \max\{c_2([0,1]), c_2(S^1)\} 
= \max\{1,\tfrac{\pi}{2}\} = \tfrac{\pi}{2},
\]
where the first equality uses Lemma~39 in~\cite{CahillIM:24} (see \cref{prop.max-product}) and the second uses Theorem~6.1 in~\cite{HeimendahlLVZ:22} (see \cref{prop.known-distortions}).

\medskip
\noindent\ref{it:half-spin}: 
Note that the reflections in $G$ generate an index-$2$ normal subgroup $N$ that is a wallpaper group of type $*2222$. 
The quotient $\mathbb R^2/N$ is isometric to a rectangle, and $G/N\cong C_2$ acts on the rectangle by a half rotation.
By the second equivariant embedding lemma (\cref{lem:quotient-distortion}), there is an isometric embedding $\mathbb R^2/G \to \mathbb R^2/\{\pm \Id\}$ that descends from the equivariant isometric embedding of the rectangle $\mathbb R^2/N \to \mathbb R^2$, so $c_2(\RR^2/G)$ is at most $c_2(\RR^2/\{\pm \Id\})$. The distortion of the latter space is $\sqrt{2}$ by \cref{thm.root unity}, matching our lower bound of $\sqrt{2}$.

\medskip
\noindent\ref{it:quarter-spin}: 
The same argument that applied in case \ref{it:half-spin} applies here as well; the subgroup of $G$ generated by reflections is an index-$4$ normal subgroup $N$ that is a wallpaper group of type $*2222$. 
Since $G$ is of type $4{\ast}2$, the translation lattice $T_G = T_N$ must be square, and so the quotient $\mathbb R^2/N$ is isometric to a flat square. 
The quotient group $G/N\cong C_4$ acts on $\mathbb R^2/N$ by quarter rotations. 
As in the previous case, the upper bound is obtained by isometrically embedding $\mathbb R^2/G \to \mathbb R^2/C_4$,
the Euclidean distortion of which is $4\sin(\frac{\pi}{8}) = 2\sqrt{2-\sqrt{2}}$ by \cref{thm.root unity}.

\medskip
\noindent\ref{it:klein}:
Note that $T_G$ must be a rectangular lattice. 
The translation subgroup $N$ of $G$ is a normal subgroup of index $2$. 
Since $T_N = T_G$, $N$ is a wallpaper group of type $o$ with a rectangular underlying lattice. 
\cref{lem.upsilon compute,thm:torus-distortion}(b) together imply $c_2(\mathbb R^2/G) \leq \sqrt{2}\cdot c_2(\mathbb R^2/N) = \frac{\pi}{\sqrt{2}}$. 
For the lower bound, let $\pi:\mathbb R^2\to \mathbb R^2/G$ be the quotient map, and let $L\subseteq \mathbb R^2$ be any glide axis of $G$. 
Then $\pi(L)$ is topologically a circle in the quotient, and we claim that the distance on the circle is geodesic\footnote{A \emph{geodesic circle} is any metric space isometric to a circle equipped with the standard arc-length metric, or equivalently, to a quotient space of the form $\mathbb R/c\mathbb Z$ with its induced quotient metric.}. 
Indeed, if $x,y\in L$, then the orbit $G\cdot y$ is a rectangular grid one of whose rows is contained in $L$, and the shortest path from $x$ to a point in $G\cdot y$ remains within $L$. 
The Euclidean distortion of a geodesic circle is $\frac{\pi}{2}$, giving the desired lower bound.

\medskip
\noindent\ref{it:football}:
A group $G$ of this type is generated by a pair of glides $\gamma_1,\gamma_2$ in orthogonal axes. To prove the upper bound, note that $G$ contains an index-$2$ normal subgroup $N$ of type ${\times}{\times}$, generated by $\gamma_1^2$ and $\gamma_2$. (It is generated over $N$ by either the glide $\gamma_1$ or the order-2 rotation $\gamma_1\gamma_2$.) Case~\ref{it:klein}, \cref{thm.upsilon-bound}, and \cref{lem.upsilon compute} together give $c_2(\mathbb R^2/G) \leq \pi$. Meanwhile, for the lower bound, we claim $\RR^2/G$ contains a geodesic circle, so that $c_2(\RR^2/G)\geq \pi/2$. The argument is similar to part \ref{it:klein}. Say the magnitude of the glide $\gamma_i$ is $\alpha_i>0$ for $i=1,2$. We can assume without loss of generality that $\alpha_1 \leq 2\alpha_2$. Then the image in $\RR^2/G$ of the glide axis $L$ of $\gamma_1$ is the desired geodesic circle, as we now argue. If $y\in L$ then $G\cdot y \subseteq G\cdot L = \langle \gamma_2 \rangle \cdot L$, the latter being the union of the glide axes parallel to $L$, which are $\alpha_2$ apart from each other. If $x\in L$ too, then there is an element of $
(G\cdot y)\cap L$ at a distance of at most $\alpha_1 / 2$ from $x$. Since $\alpha_1 / 2 \leq \alpha_2$, there is a representative of $G\cdot y$ of minimum distance to $x$ lying in $L$, so the image of $L$ in $\RR^2/G$ is isometric to $L/\langle \gamma_1\rangle$, a geodesic circle.

\medskip
\noindent\ref{it:torus-cover}:
As in case~\ref{it:klein}, the translation subgroup $N$ of $G$ is normal of index $2$. 
The desired upper bound follows from \cref{thm:torus-distortion}(a) and \cref{lem.upsilon compute}.

\medskip
\noindent\ref{it:mobius}:
A group $G$ of this type contains an index-$2$ normal subgroup $N$ of type ${\ast}{\ast}$, generated by a reflection and the square of a glide of minimal magnitude. 
Case~\ref{it:cylinder} and \cref{lem.upsilon compute} together give $c_2(\mathbb R^2/G) \leq \frac{\pi}{\sqrt{2}}$. 
The lower bound comes from noting that the M\"obius strip contains a geodesic circle, the distortion of which is $\frac{\pi}{2}$.

\medskip
\noindent\ref{it:glass-2}--\ref{it:glass-6}:
For a group $G$ of any of these five types, the quotient $\mathbb R^2/G$ consists of a pair of flat pieces glued together along some portion of the boundary. 
\begin{itemize}
\item If $G$ is of type $333$, $442$, or $632$, then $G$ is the alternating subgroup of an affine reflection group of type $\ast 333$, $\ast 442$, or $\ast 632$ respectively. Thus, the quotient $\mathbb R^2/G$ is isometric to a pair of equilateral, isosceles right, or semi-equilateral triangles respectively, glued together at their boundaries. 

\item If $G$ is of type $3{\ast}3$, then it has a rotation center $p$ not on a reflection line; it is of index $2$ inside a group $\langle G, f\rangle$ of type $\ast 632$ generated by a reflection $f$ in a line through $p$. The quotient $\RR^2/\langle G,f\rangle$ is isometric to a fundamental domain $\Delta$ which is a semi-equilateral triangle; we can take it to have its $\pi/3$-vertex at $p$. Meanwhile, the reflections in $\langle G,f \rangle$ across a line through $p$ all lie in the nontrivial coset $fG$ of $G$ in $\langle G,f\rangle$ (while the reflection in the third edge of $\Delta$ already lies in $G$). Thus, identifying $\RR^2/\langle G,f\rangle$ with $\Delta$, the natural map $\RR^2/G\rightarrow \RR^2/\langle G,f\rangle$ is a degree-2 branched cover, branched along both edges of $\Delta$ that are incident to $p$. Furthermore, $\RR^2/G$ is disconnected by the ramification locus because the axes of the reflections in $Gf$ disconnect a fundamental domain for $G$. Thus, the quotient $\RR^2/G$ is isometric to two copies of $\Delta$ glued along the two sides incident to $p$, i.e., to a pair of semi-equilateral triangles glued along the short leg and the hypotenuse.

\item If $G$ is of type $22{\ast}$, the argument is similar to that for type ${3{\ast}3}$. In this case $G$ has a pair of non-conjugate order-$2$ rotation centers $p,q$ such that the line $\ell$ through $p,q$ is parallel to the reflection axes but not equal to any of them. If $f$ is the reflection in $\ell$, then the group $\langle G,f\rangle$ is of type $\ast 2222$. Its fundamental domain is a rectangle $\Omega$, and if in the first place we choose $p$ and $q$ to be as close to each other as possible, then we can choose $\Omega$ to have $p$ and $q$ as vertices. Meanwhile, all the reflections in $\langle G,f\rangle$ through either $p$ or $q$ lie in the nontrivial coset $fG$ of $G$ in $\langle G,f\rangle$ (while the reflection in the other edge of $\Omega$ lies in $G$). Thus, identifying $\RR^2/\langle G,f\rangle$ with $\Omega$, the natural map $\RR^2/G\rightarrow \RR^2/\langle G,f\rangle$ is a degree-$2$ branched cover of $\Omega$, branched over the edges incident to either $p$ or $q$. By the same reasoning as for type $3{\ast}3$, $\RR^2/G$ is disconnected by the branch locus. Thus, the quotient $\RR^2/G$ is a pair of rectangles glued along three of the four sides.
\end{itemize}
In all cases, the quotient is a pair of identical spaces of distortion $1$ glued together along a closed subset, so \cref{corr.glued space} gives an upper bound of $2$.
\end{proof}

\begin{figure}
    \centering
    \includegraphics[width=0.4\linewidth]{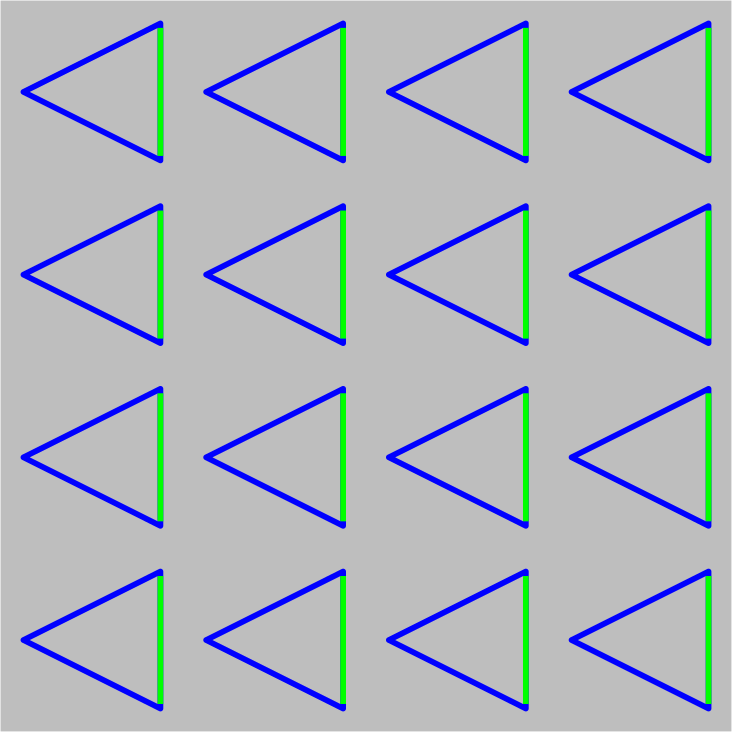}%
    \qquad\qquad%
    \includegraphics[width=0.4\linewidth]{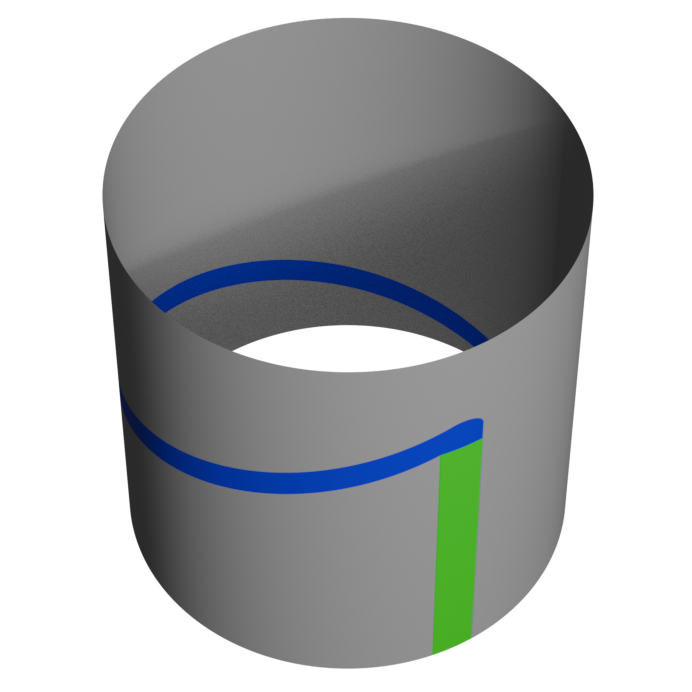}%

    \medskip
    
    \includegraphics[width=0.4\linewidth]{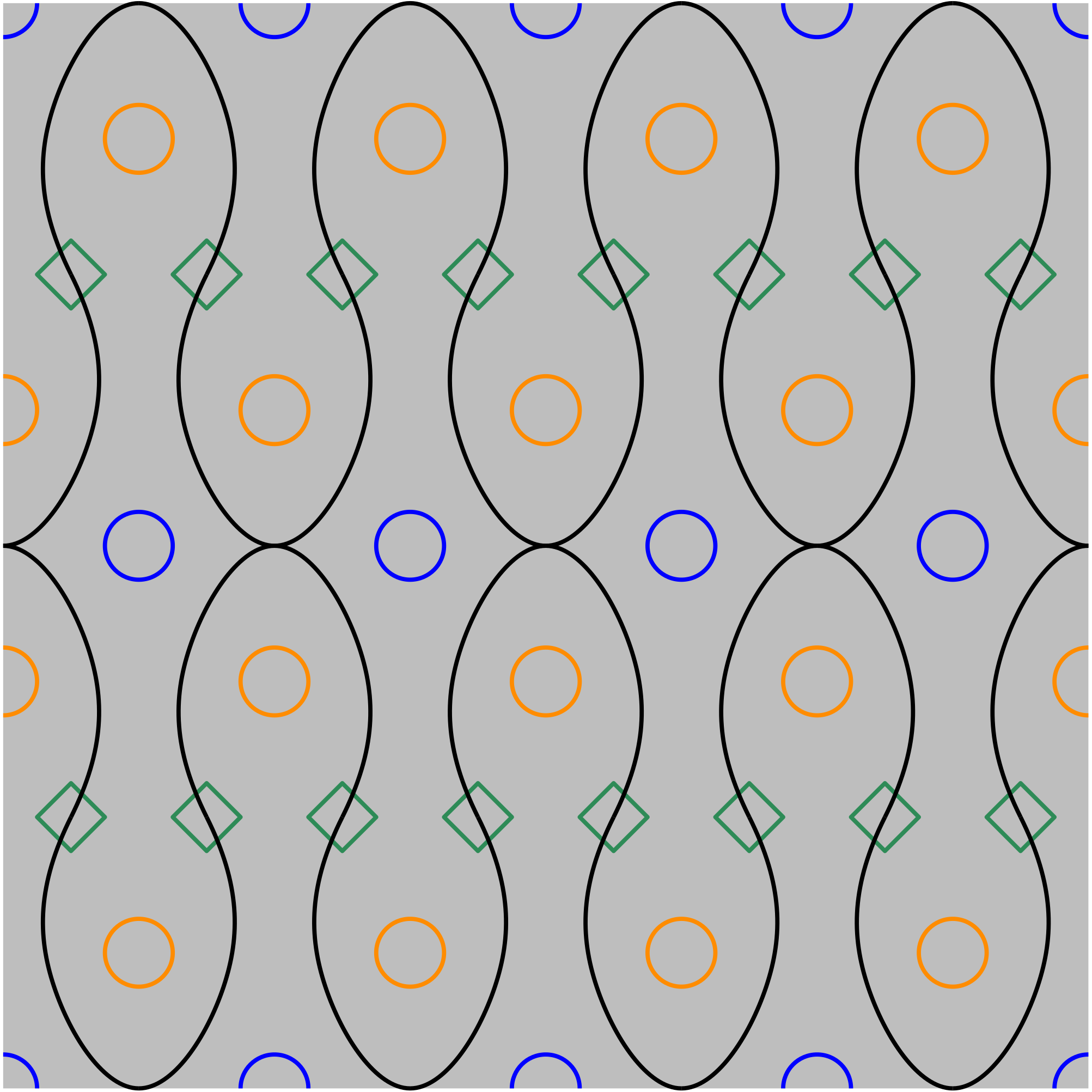}%
    \qquad\qquad%
    \includegraphics[width=0.4\linewidth]{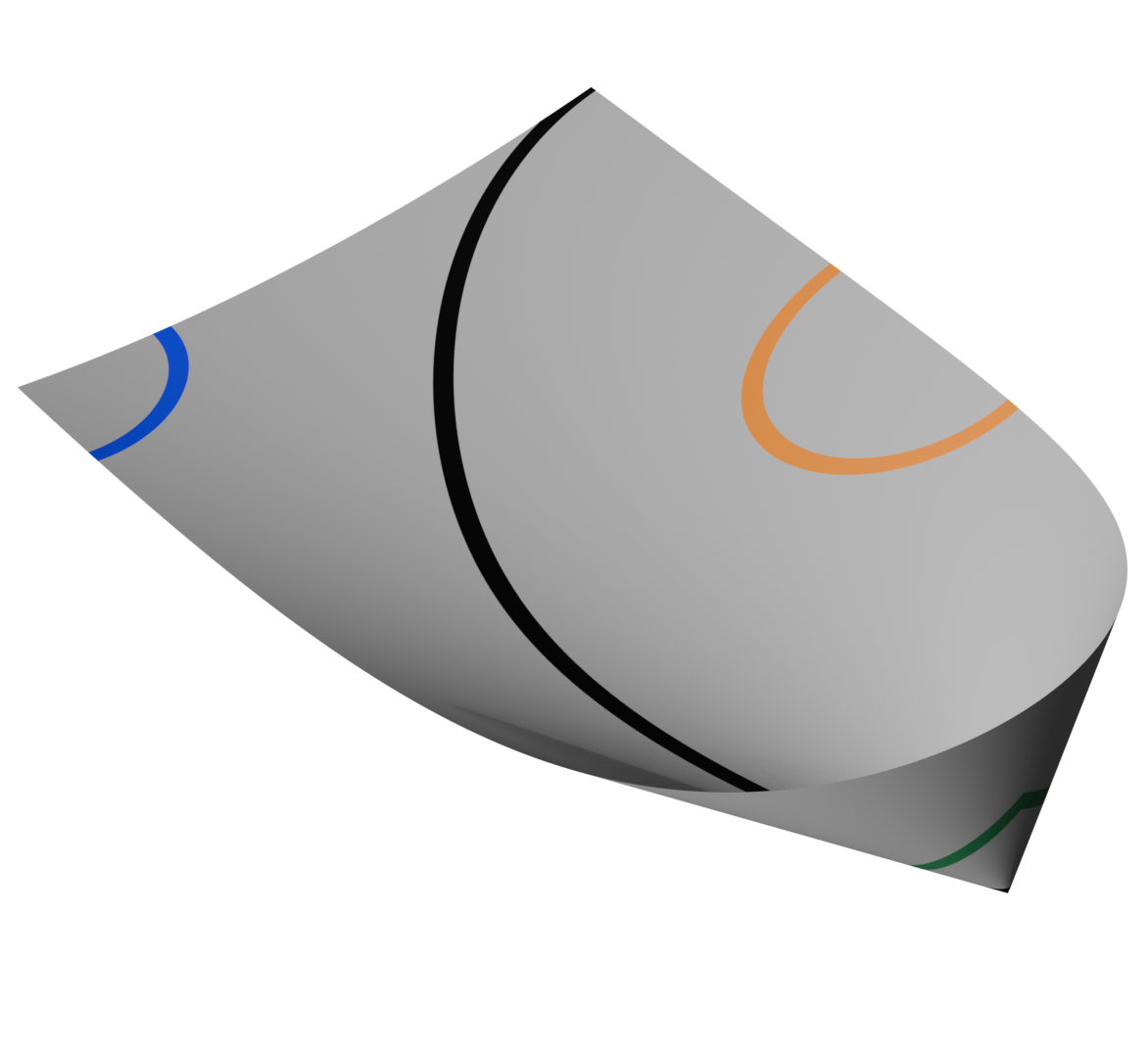}%

    \medskip
    
    \includegraphics[width=0.4\linewidth]{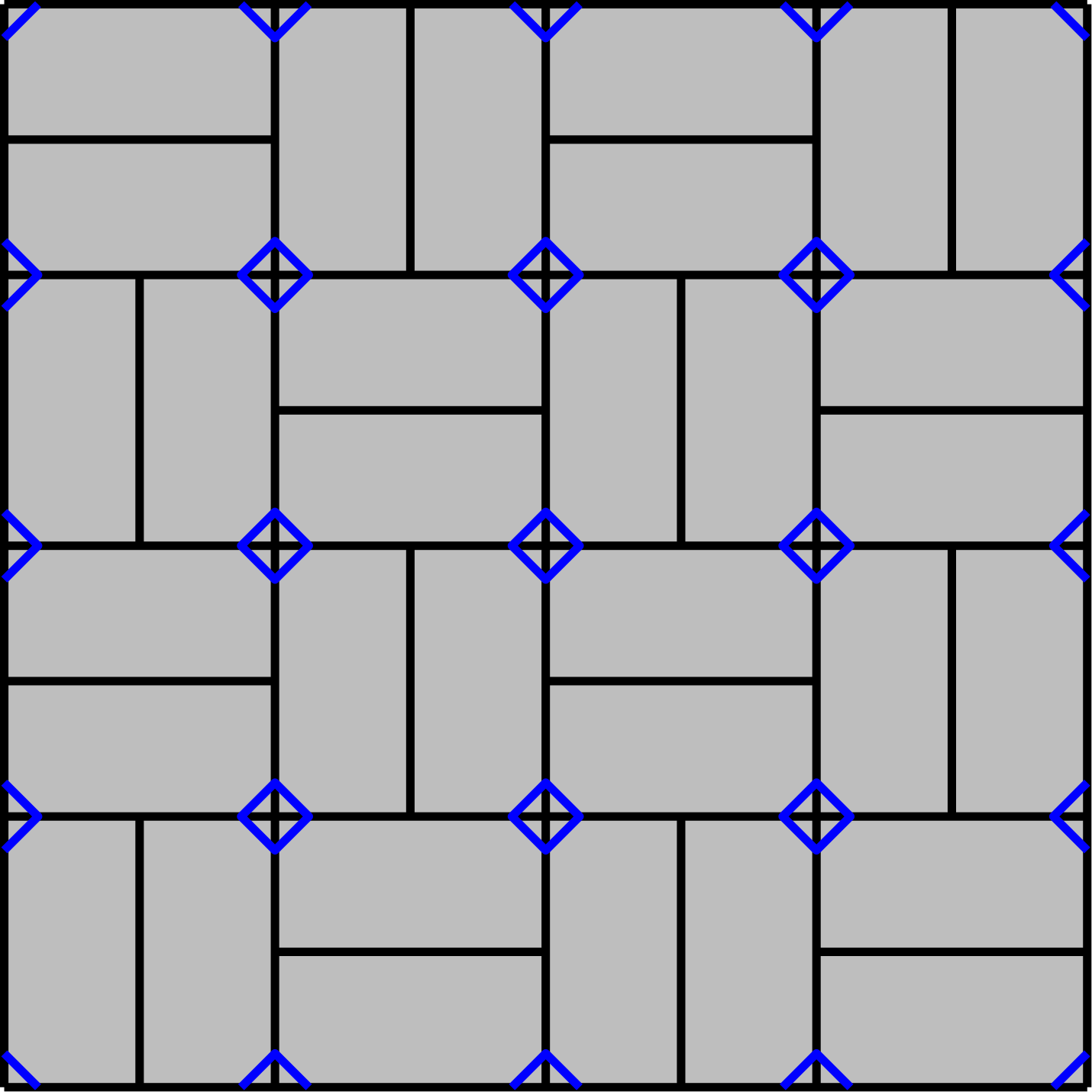}%
    \qquad\qquad%
    \includegraphics[width=0.4\linewidth]{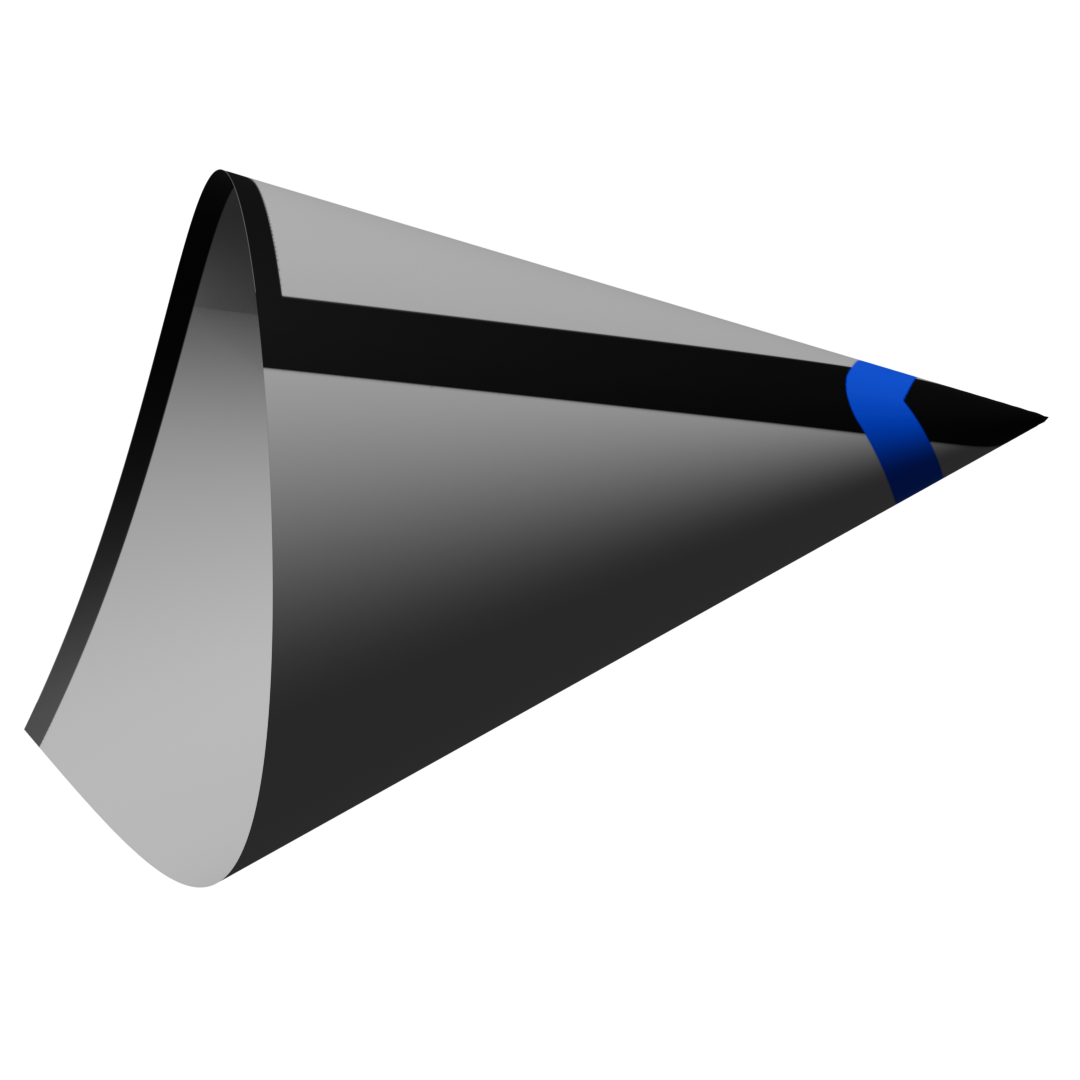}%
    
    \caption{\textbf{(left)} Wallpapers with symmetry groups of type ${\ast}{\ast}$, $2{\ast}22$, and $4{\ast}2$, respectively. \textbf{(right)} An optimal bilipschitz embedding of each wallpaper pattern quotiented by its symmetry group. The image of each embedding forms a portion of a circular cylinder or cone.}
    \label{fig:wallpaper-embeddings}
\end{figure}

\cref{thm.wallpaper groups} computes the exact Euclidean distortion of quotients of $\mathbb R^2$ by seven different types of wallpaper groups. 
Of these seven, four have a distortion of 1, optimally embedding back into $\mathbb R^2$ as a fundamental domain of the action. 
The three remaining quotients embed optimally in $\mathbb R^3$. 
The optimal embeddings implicit in the proof of \cref{thm.wallpaper groups} are illustrated in Figure~\ref{fig:wallpaper-embeddings}.

\subsection{Affine-linear actions on landmarks}
\label{sec.euclidean}

In this section, we consider a setting that is particularly relevant to \textit{morphometrics}, which quantitatively analyzes the effect of genetic or environmental factors on the size and shape of biological organisms.
To accomplish this analysis, it is common to scan a collection of specimens, and then painstakingly identify the locations of certain discrete homologous features known as \textit{landmarks}, e.g., the point on a beetle's abdomen that is furthest from its head.
After recording the spatial coordinates of $n$ landmarks in a $3$-dimensional scan, the result is an $n$-tuple of vectors in $\mathbb{R}^3$ that represents a biological specimen.
Since different rotations and translations of the specimen should be identified with each other, we mod out by the diagonal action of the special Euclidean group $\operatorname{SE}(3)=\SO(3)\ltimes\mathbb{R}^3$ on $(\mathbb{R}^3)^n$.
How shall we embed this orbit space into Euclidean space so as to facilitate data analysis?

In what follows, we apply \cref{thm.euclidean-subgroup} to treat a more general instance of this problem in which $K\leq\O(r)$ is compact and $K\ltimes\mathbb{R}^r$ acts on $(\mathbb{R}^r)^n$ by
\[
(A,b)\cdot (x_i)_{i=1}^n
=(Ax_i+b)_{i=1}^n.
\]
Note that $K\ltimes\mathbb{R}^r$ acts transitively on $\mathbb{R}^r$, and so $(\mathbb{R}^r)^n/(K\ltimes\mathbb{R}^r)$ consists of a single point in the degenerate case where $n=1$, i.e., its Euclidean distortion is undefined.


\begin{theorem}\label{thm.EG}
For any compact group $K\leq \O(r)$ and $n\geq 2$, it holds that
\[
c_2\big((\mathbb{R}^r)^n/(K\ltimes\mathbb{R}^r)\big) 
= c_2\big((\mathbb{R}^r)^{n-1}/K\big).
\]
\end{theorem}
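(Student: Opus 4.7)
The plan is to apply \cref{thm.euclidean-subgroup} with $V := (\mathbb{R}^r)^n$ and $\Gamma := K \ltimes \mathbb{R}^r \leq \E(V)$, where $b \in \mathbb{R}^r$ acts diagonally on $V$ by $(x_i)_{i=1}^n \mapsto (x_i + b)_{i=1}^n$. The translation subgroup $\Gamma \cap V$ is the diagonal
\[
T = \{(b, b, \ldots, b) \in V : b \in \mathbb{R}^r\},
\]
which is a linear subspace of $V$, and the point group $\pi(\Gamma)$ is the diagonal copy of $K$ inside $\O(V)$, which is compact and hence closed. The hypotheses of \cref{thm.euclidean-subgroup} are therefore satisfied, so $V/\Gamma$ is isometric to $T^\perp/K$, where
\[
T^\perp = \bigl\{(x_1, \ldots, x_n) \in V : \textstyle\sum_{i=1}^n x_i = 0\bigr\}.
\]

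It remains to identify $T^\perp/K$ with $(\mathbb{R}^r)^{n-1}/K$. For this, I will exhibit a $K$-equivariant linear isometry $\Phi \colon (\mathbb{R}^r)^{n-1} \to T^\perp$; by the second equivariant embedding lemma (\cref{lem:quotient-distortion}), such a map descends to an isometry of quotient spaces. Fix any orthonormal basis $u_1, \ldots, u_{n-1}$ of the codimension-one subspace $W := \{a \in \mathbb{R}^n : \sum_j a_j = 0\}$, and define
\[
\Phi(y_1, \ldots, y_{n-1}) := \left(\sum_{i=1}^{n-1} u_i(j)\, y_i\right)_{j=1}^n \in V.
\]
The image of $\Phi$ lies in $T^\perp$ since each $u_i$ is orthogonal to $(1, \ldots, 1) \in \mathbb{R}^n$. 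A direct calculation using orthonormality of the $u_i$'s gives $\|\Phi(y)\|^2 = \sum_i \|y_i\|^2$, so $\Phi$ is a linear isometry (and surjective by the dimension count $\dim T^\perp = r(n-1)$). Finally, $\Phi$ is $K$-equivariant because $K$ acts only on the $\mathbb{R}^r$-factor of each summand and commutes with the scalar multiplications by $u_i(j)$. Composing these two isometries yields $(\mathbb{R}^r)^n/(K \ltimes \mathbb{R}^r) \cong (\mathbb{R}^r)^{n-1}/K$ as metric spaces, whence equality of Euclidean distortions.

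The main conceptual work is packaged into \cref{thm.euclidean-subgroup}, and the remainder is elementary linear algebra on a $K$-representation. The only subtleties are verifying that $T$ really is the full diagonal inside $V$ and that the point group is genuinely $K$ (rather than some strictly larger subgroup of $\O(V)$), but both facts follow immediately from the definition of the semidirect product action.
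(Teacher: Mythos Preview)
Your proof is correct and follows essentially the same approach as the paper: apply \cref{thm.euclidean-subgroup} to identify $V/\Gamma$ with $T^\perp/K$, then use a $K$-equivariant linear isometry between $T^\perp$ and $(\mathbb{R}^r)^{n-1}$ together with \cref{lem:quotient-distortion}. The only cosmetic difference is that the paper writes the isometry in matrix form as $X\mapsto XB$ (with $B\in\mathbb{R}^{n\times(n-1)}$ having orthonormal columns spanning $\mathbf{1}^\perp$), which is exactly the inverse of your coordinate map $\Phi$.
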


\begin{proof}
In the context of \cref{thm.euclidean-subgroup}, we have $V=(\mathbb{R}^r)^n$.
Also, $G\leq\O(V)$ is the image of the representation $K\curvearrowright V$, namely, the group consisting of $\operatorname{Id}_n\otimes A\in\O(V)$ for every $A\in K$, while $T$ is the subspace of $(x_i)_{i=1}^n\in V$ for which all of the $x_i$'s are equal to each other.
Then $T^\perp$ is the subspace of $(x_i)_{i=1}^n\in V$ for which $\sum_{i=1}^n x_i=0$, and \cref{thm.euclidean-subgroup} gives 
\[
c_2\big((\mathbb{R}^r)^n/(K\ltimes\mathbb{R}^r)\big) 
=c_2\big(V/(G\ltimes T)\big)
=c_2(T^\perp/G).
\]
Next, if we identify each $(x_i)_{i=1}^n\in T^\perp$ with the $r\times n$ matrix whose $i$th column is $x_i$, then $T^\perp$ is the subspace of $X\in\mathbb{R}^{r\times n}$ with $X\mathbf{1}=0$.
Let $B\in\mathbb{R}^{n\times (n-1)}$ denote any matrix whose columns form an orthonormal basis for the orthogonal complement of $\mathbf{1}\in\mathbb{R}^n$.
Then $X\mapsto XB$ defines a surjective $K$-equivariant isometry $T^\perp\to(\mathbb{R}^r)^{n-1}$.
By the second equivariant embedding lemma (\cref{lem:quotient-distortion}), this descends to an isometric isomorphism of orbit spaces, and so
\[
c_2\big((\mathbb{R}^r)^n/(K\ltimes\mathbb{R}^r)\big) 
=c_2(T^\perp/G)
=c_2\big((\mathbb{R}^r)^{n-1}/K\big).
\tag*{\qedhere}
\]
\end{proof}

    


In so many words, the proof of \cref{thm.EG} indicates that the best way to embed $(\mathbb{R}^r)^n/(K\ltimes \mathbb{R}^r)$ is to first translate the tuple $(x_i)_{i=1}^n$ of landmarks to be centered at the origin (by subtracting the centroid), and then apply the appropriate $K$-invariant map as if one were embedding $(\mathbb{R}^r)^n/K$ (or more correctly, the sub-metric space corresponding to the origin-centered tuples in $(\mathbb{R}^r)^n$).
This approach allows us to embed $(\mathbb{R}^r)^n/\E(r)$ in a way that achieves the Euclidean distortion.

\begin{example}\label{cor.euclidean-orthogonal-isometry}
In the case where $K=\O(r)$, it holds that $K\ltimes\mathbb{R}^r$ is the Euclidean group $\E(r)$, in which case \cref{thm.EG,prop.gram bounds} together give
\[
c_2\big((\mathbb{R}^r)^n/\E(r)\big) = \left\{\begin{array}{cl}
\sqrt{2} &\text{if } n\geq 3\\
1 &\text{if } n=2.
\end{array}\right.
\]
\end{example}

The above example is useful in settings in which the practitioner is happy to mod out by chirality in addition to rotations and translations.
If chirality is an important feature to maintain, one should instead mod out by the special Euclidean group.

\begin{example}
In the case where $K=\SO(r)$, it holds that $K\ltimes\mathbb{R}^r$ is the special Euclidean group $\operatorname{SE}(r)$, in which case \cref{thm.EG,thm.so psi lower bound} together give
\[
\sqrt 2
\leq c_2\big((\mathbb{R}^r)^n/\operatorname{SE}(r)\big)
\leq 2\sqrt 2
\]
whenever $n-1\geq r\geq 2$.  
\end{example}


\subsection{Permutation actions on graphs and databases}
\label{sec.wednesday theorem}

While the previous sections showed how to embed various orbit spaces into Euclidean space with uniformly bounded distortion, in this section, we discuss real-world instances of the quotient embedding problem that are not so well behaved.
The examples we present exhibit a common form: 
The objects of interest are represented as a matrix only after selecting an arbitrary labeling of sorts, thereby introducing a permutation ambiguity.
Throughout this section, we use the notation $[n]:=\{1,\ldots,n\}$ for our label set.

As a precursor example, consider the set of \textit{point clouds} consisting of $n$ points in $\mathbb{R}^d$.
While one might be inclined to think of these objects as multisets of size $n$, one typically records an example of such an object by listing the constituent vectors in some order.
This results in an $n$-tuple of vectors, which you might represent as a function $f\colon [n]\to\mathbb{R}^d$, but note that the arbitrarily selected order introduces an ambiguity.
In particular, the symmetric group $S_n$ acts on the space $(\mathbb{R}^d)^{[n]}$ of such functions by $(\pi\cdot f)(i)=f(\pi^{-1}(i))$ for $\pi\in S_n$, and two functions should be identified if they reside in a common $S_n$-orbit.
As such, we may identify our space of point clouds with the orbit space $(\mathbb{R}^d)^{[n]}/S_n$, whose quotient metric coincides with the $2$-Wasserstein distance.
Unfortunately, the Euclidean distortions of these spaces are not uniformly bounded.


\begin{proposition}
\label{prop.unbounded growth}
For each $d\geq 3$, it holds that $\displaystyle\liminf_{n\to \infty}c_2\big((\mathbb{R}^d)^{[n]}/S_n\big) = \infty$.
\end{proposition}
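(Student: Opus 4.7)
The plan is to lower-bound $c_2((\mathbb{R}^d)^{[n]}/S_n)$ by reducing to the known result from \cite{CahillIM:24} (discussed in \cref{sec.sub hilbert quotients}) that $c_2(\ell^2(\mathbb{N}\to\mathbb{R}^d)/S_\infty) = \infty$ for $d\geq 3$. The key observation is that, for any fixed $n_0$, the orbits in $\ell^2/S_\infty$ represented by sequences supported in $[n_0]$ embed \emph{isometrically} into $(\mathbb{R}^d)^{[N]}/S_N$ via zero-padding, provided $N\geq 2n_0$.

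To verify this padding isometry, fix $f,g\in\ell^2(\mathbb{N}\to\mathbb{R}^d)$ with supports in $[n_0]$ and let $\widetilde f,\widetilde g\in(\mathbb{R}^d)^{[N]}$ be their zero-padded restrictions. For any $\sigma\in S_N$, split the sum $\sum_{k=1}^N \|\widetilde f(k) - \widetilde g(\sigma(k))\|^2$ according to whether $k$ and $\sigma(k)$ lie in $[n_0]$. One sees that $\sigma$ induces a partial matching between the nonzero entries of $f$ and those of $g$, and that the total cost equals the sum of $\|f(k)-g(\sigma(k))\|^2$ over matched pairs plus the squared norms of the unmatched nonzero entries on each side. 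The infimum defining $d_{\ell^2/S_\infty}([f],[g])$ ranges over exactly the same class of partial matchings. Since $N - n_0 \geq n_0$, there are enough zero positions in each padded tuple to realize any partial matching as a genuine permutation of $[N]$ (send unmatched entries to the surplus of zero positions), so the $S_N$- and $S_\infty$-orbital infima coincide.

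With the padding isometry in hand, apply \cref{prop.finitely-determined} to $\ell^2(\mathbb{N}\to\mathbb{R}^d)/S_\infty$: since this space has infinite Euclidean distortion, for every $M>0$ there exists a finite $F\subseteq\ell^2/S_\infty$ with $c_2(F)>M$. Approximating each representative by a truncation, and using that finitely-supported sequences are dense in $\ell^2$ together with the fact that the Euclidean distortion of a finite metric space is multiplicatively stable under small perturbations of its distance matrix, we may assume $F$ consists of orbits of sequences supported in a common $[n_0]$. The padding isometry then realizes $F$ isometrically inside $(\mathbb{R}^d)^{[N]}/S_N$ for every $N\geq 2n_0$, giving $c_2((\mathbb{R}^d)^{[n]}/S_n) > M$ for all $n \geq 2n_0$. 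Letting $M\to\infty$ yields $\liminf_{n\to\infty} c_2((\mathbb{R}^d)^{[n]}/S_n) = \infty$.

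The main obstacle is the matching-theoretic verification of the padding isometry, which is a routine combinatorial computation once the four cases $(k,\sigma(k))\in [n_0]\times[n_0],\ [n_0]\times[n_0]^c,\ [n_0]^c\times[n_0],\ [n_0]^c\times[n_0]^c$ are tabulated. As an incidental bonus, one can obtain the monotonicity $c_2((\mathbb{R}^d)^{[m]}/S_m)\leq c_2((\mathbb{R}^d)^{[n]}/S_n)$ for $m\leq n$ directly from \cref{thm.wednesday}, by taking $W=(\mathbb{R}^d)^{[n]}$, $G=S_n$, and $V$ the subspace of tuples whose last $n-m$ coordinates vanish, so that $G_V\cong S_m$ and $V^\perp \cong (\mathbb{R}^d)^{[m]}$ with the standard permutation action.
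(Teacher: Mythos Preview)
Your argument is correct and closely related to the paper's, which simply notes that the result follows from the \emph{proof} of Theorem~42 in~\cite{CahillIM:24}. The difference is one of packaging: that proof (adapted from~\cite{AndoniNN:16}) already exhibits explicit finite point-cloud configurations with arbitrarily large distortion, and these live directly in $(\mathbb{R}^d)^{[n]}/S_n$ for the relevant $n$; the paper just points to them. You instead treat the infinite-dimensional statement $c_2\big(\ell^2(\mathbb{N}\to\mathbb{R}^d)/S_\infty\big)=\infty$ as a black box and recover the finite-$n$ result via \cref{prop.finitely-determined}, truncation, and zero-padding. Your route is more modular---it does not require opening up the proof of Theorem~42---at the cost of the truncation step, which relies on the (true) fact that $c_2$ of a finite metric space varies continuously with its distance matrix once all pairwise distances are bounded away from zero. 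The padding-isometry verification via partial matchings is correct, and $N\geq 2n_0$ is exactly the threshold needed to realize every partial matching between subsets of $[n_0]$ as a permutation of $[N]$.

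One small slip in your aside on monotonicity: with $V$ the tuples whose \emph{last} $n-m$ coordinates vanish, the pointwise stabilizer $G_V$ consists of permutations fixing $[m]$ pointwise, so $G_V\cong S_{n-m}$ (not $S_m$) and $V^\perp\cong(\mathbb{R}^d)^{[n-m]}$. The monotonicity conclusion survives---either take $V$ to be the tuples whose \emph{first} $m$ coordinates vanish, or simply relabel $m\leftrightarrow n-m$.
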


The proof of \cref{prop.unbounded growth} follows directly from the proof of Theorem~42 in~\cite{CahillIM:24}, which in turn was adapted from the main ideas in~\cite{AndoniNN:16}.
Regarding the hypothesis $d\geq 3$, we note that when $d=1$, the Euclidean distortion is always $1$.
Meanwhile, the situation is open in the case where $d=2$.

Next, consider the set of \textit{weighted simple graphs} on $n$ vertices, where each edge is assigned some nonzero weight in $\mathbb{R}$.
Once we label the vertices by $[n]$, then each edge is determined by an unordered pair $\{i,j\}\in\binom{[n]}{2}$, and so one may represent the weighted graph as a function $f\colon \binom{[n]}{2}\to\mathbb{R}$.
In particular, if the vertices with labels $i$ and $j$ are adjacent, then $f(\{i,j\})$ equals the corresponding edge weight, and otherwise $f(\{i,j\})=0$.
Of course, since the labeling was arbitrary, this introduces an ambiguity by the action of $S_n$ on $\mathbb{R}^{\binom{[n]}{2}}$ defined by $(\pi\cdot f)(\{i,j\})=f(\{\pi^{-1}(i),\pi^{-1}(j)\})$.
The resulting orbit space $\mathbb{R}^{\binom{[n]}{2}}/S_n$ has a quotient metric that is $\mathsf{NP}$-hard to compute, as one may use this distance to detect whether a given unweighted graph contains a Hamiltonian cycle.
It turns out that this orbit space is also poorly behaved in the context of Euclidean distortion.

\begin{theorem}
It holds that $\displaystyle\liminf_{n\to \infty}c_2\big(\mathbb{R}^{\binom{[n]}{2}}/S_n\big) = \infty$.
\end{theorem}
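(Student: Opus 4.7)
The plan is to deduce this from \cref{thm.wednesday} together with \cref{prop.unbounded growth}, by exhibiting a subspace $V$ of $W:=\mathbb R^{\binom{[n]}{2}}$ whose orthogonal quotient $V^\perp/G_V$ contains an isometric copy of the point-cloud quotient $(\mathbb R^3)^{[n-3]}/S_{n-3}$ as a sub-metric space. Since the distortion of the latter diverges for $d=3$ by \cref{prop.unbounded growth}, the distortion of $W/S_n$ must diverge too.

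Concretely, fix a $3$-element subset $A\subseteq [n]$ with complement $B:=[n]\setminus A$, and let $V\leq W$ be the subspace spanned by the standard basis vectors $e_{\{i,j\}}$ with $\{i,j\}\subseteq A$. The pointwise stabilizer $G_V\leq S_n$ then consists of precisely those $\pi$ satisfying $\{\pi(i),\pi(j)\}=\{i,j\}$ for all $\{i,j\}\in\binom{A}{2}$; a brief combinatorial check (using $|A|\geq 3$) shows $G_V=S_B$. The orthogonal complement decomposes as an $S_B$-representation
\[
V^\perp \;=\; \mathbb R^{\binom{B}{2}} \,\oplus\, \mathbb R^{A\times B},
\]
where $S_B$ acts on $\mathbb R^{\binom{B}{2}}$ by the usual permutation action on edges within $B$, and on $\mathbb R^{A\times B}$ by permuting the $B$-coordinate. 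Applying \cref{thm.wednesday} yields
\[
c_2\big(V^\perp/S_B\big)\;\leq\; c_2\big(W/S_n\big).
\]

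Next, since $\mathbb R^{A\times B}$ is an $S_B$-invariant subspace of $V^\perp$, the inclusion descends to an isometric embedding $\mathbb R^{A\times B}/S_B \hookrightarrow V^\perp/S_B$ of sub-metric spaces, because for any $x,y\in\mathbb R^{A\times B}$ we have $\inf_{\sigma\in S_B}\|(0,x)-(0,\sigma y)\|_{V^\perp}=\inf_{\sigma\in S_B}\|x-\sigma y\|$. Identifying $A\times B$ with $\{1,2,3\}\times B$ turns the $S_B$-action on $\mathbb R^{A\times B}$ into the standard action of $S_{n-3}$ on $(\mathbb R^3)^{[n-3]}$ by precomposition. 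Chaining the inequalities gives
\[
c_2\big((\mathbb R^3)^{[n-3]}/S_{n-3}\big) \;\leq\; c_2\big(V^\perp/S_B\big) \;\leq\; c_2\big(\mathbb R^{\binom{[n]}{2}}/S_n\big).
\]
Taking $n\to\infty$ and invoking \cref{prop.unbounded growth} yields the conclusion. No step is a real obstacle; the only subtlety is identifying $G_V$ with $S_B$, which is where the choice $|A|\geq 3$ is essential.
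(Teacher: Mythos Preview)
Your proof is correct and follows essentially the same approach as the paper's. The paper makes the specific choice $A=\{n-2,n-1,n\}$, names the subspace $U:=\mathbb{R}^{A\times B}$ inside $V^\perp$, and constructs the $S_{n-3}$-equivariant isometry $U\to(\mathbb{R}^3)^{[n-3]}$ explicitly, but the logical structure---choosing $V$ to be the edges inside a $3$-set, identifying $G_V$ with $S_{n-3}$, applying \cref{thm.wednesday}, and then passing to the invariant subspace isomorphic to the point-cloud quotient---is identical to yours.
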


\begin{proof}
We will leverage \cref{thm.wednesday} to reduce to the point cloud setting, and then apply \cref{prop.unbounded growth} to conclude the result.

Put $W:=\mathbb{R}^{\binom{[n]}{2}}$, and let $V$ denote the subspace of all $f$ supported on a set of pairs $\{i,j\}$ with $i,j>n-3$.
Let $U\leq V^\perp$ denote the subspace of all $f$ supported on a set of pairs $\{i,j\}$ with $i\leq n-3<j$, and let $L\colon U\to(\mathbb{R}^3)^{[n-3]}$ denote the linear map defined by
\[
(Lf)(i)
=\left[\begin{array}{l}
f(\{i,n-2\})\\
f(\{i,n-1\})\\
f(\{i,n\}))
\end{array}\right].
\]

Next, let $G\leq\O(W)$ denote the image of the orthogonal representation $S_n\curvearrowright W$.
By inspecting a generic member of $V$, it follows that the pointwise stabilizer $G_V\leq G$ of $V$ corresponds to the subgroup $S_{n-3}$ that fixes $n-2$, $n-1$, and $n$.
Notably, $U$ is invariant under $G_V$, and $L$ is a surjective $S_{n-3}$-equivariant isometry.
By the second equivariant embedding lemma (\cref{lem:quotient-distortion}), it follows that $U/G_V$ and $(\mathbb{R}^3)^{[n-3]}/S_{n-3}$ are isomorphic as metric spaces.
Then \cref{thm.wednesday} gives
\[
c_2\big(\mathbb{R}^{\binom{[n]}{2}}/S_n\big)
=c_2(W/G)
\geq c_2(V^\perp/G_V)
\geq c_2(U/G_V)
=c_2\big((\mathbb{R}^3)^{[n-3]}/S_{n-3}\big),
\]
and the result follows from \cref{prop.unbounded growth}.
\end{proof}

Note that in the setting where the vertices are also weighted, we obtain the metric space $\mathbb{R}^{\binom{[n]}{\leq 2}}/S_n$, of which $\mathbb{R}^{\binom{[n]}{2}}/S_n$ is a sub-metric space, and so the Euclidean distortion is also unbounded.



Finally, we consider the set of \textit{unlabeled real databases} with $m$ rows and $n$ columns.
Once we arbitrarily label the rows and columns by $[m]$ and $[n]$, respectively, we obtain a matrix $A\in\mathbb{R}^{m\times n}$.
In doing so, we also introduce an ambiguity by the action of $S_m\times S_n$ defined by $((\pi,\tau)\cdot A)_{i,j}=A_{\pi^{-1}(i),\tau^{-1}(j)}$.
Much like the previous examples in this section, the resulting orbit spaces have unbounded Euclidean distortions.

\begin{theorem}
For each $m\geq 3$, it holds that $\displaystyle\liminf_{n\to \infty}c_2\big(\mathbb{R}^{m\times n}/(S_m\times S_n)\big) = \infty$.
\end{theorem}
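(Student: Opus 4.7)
The plan is to mimic the graph-case argument: use \cref{thm.wednesday} to pass to a subspace whose pointwise stabilizer reduces $S_m \times S_n$ to a pure column-permutation action, and then invoke \cref{prop.unbounded growth} for $m\geq 3$.

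Concretely, set $W := \mathbb{R}^{m\times n}$ and $G\leq \O(W)$ the image of the orthogonal representation $S_m\times S_n\curvearrowright W$ defined by $((\sigma,\tau)\cdot A)_{ij}=A_{\sigma^{-1}(i),\tau^{-1}(j)}$. I would take $V\leq W$ to be the one-column subspace of matrices supported on column $n$, so that $V \cong \mathbb{R}^m$ and $V^\perp \cong \mathbb{R}^{m\times(n-1)}$ isometrically via the first $n-1$ columns. The first step is a direct computation of the pointwise stabilizer $G_V$: an arbitrary matrix $v\in V$ has its nonzero column $x$ sent by $(\sigma,\tau)$ to column $\tau(n)$ with entries $x_{\sigma^{-1}(\cdot)}$, so stabilizing every $v\in V$ forces $\tau(n)=n$ and $\sigma=e$. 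Hence $G_V = \{e\}\times S_{\{1,\dots,n-1\}}\cong S_{n-1}$.

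Next, I would observe that under the identification $V^\perp\cong\mathbb{R}^{m\times(n-1)}\cong(\mathbb{R}^m)^{[n-1]}$ via columns, the action of $G_V$ becomes the standard left action of $S_{n-1}$ on $(\mathbb{R}^m)^{[n-1]}$ by permuting the $n-1$ column vectors. This identification is a $G_V$-equivariant linear isometry, so the second equivariant embedding lemma (\cref{lem:quotient-distortion}) yields an isometric isomorphism of quotient metric spaces
\[
V^\perp/G_V \;\cong\; (\mathbb{R}^m)^{[n-1]}/S_{n-1}.
\]

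The main step is then the chain of inequalities provided by our toolbox. Applying \cref{thm.wednesday} and the identification above gives
\[
c_2\bigl(\mathbb{R}^{m\times n}/(S_m\times S_n)\bigr)
= c_2(W/G)
\geq c_2(V^\perp/G_V)
= c_2\bigl((\mathbb{R}^m)^{[n-1]}/S_{n-1}\bigr),
\]
and since $m\geq 3$, the result follows by taking $\liminf$ as $n\to\infty$ and invoking \cref{prop.unbounded growth}. I do not anticipate any real obstacle here: all the heavy lifting is packaged into \cref{thm.wednesday} and \cref{prop.unbounded growth}, and the only thing to verify by hand is the structure of $G_V$, which is a one-line calculation.
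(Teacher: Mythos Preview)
Your proposal is correct and follows essentially the same approach as the paper: the same choice of $V$ (matrices supported on the last column), the same stabilizer computation $G_V=\{e\}\times S_{n-1}$, and the same appeal to \cref{thm.wednesday} followed by \cref{prop.unbounded growth}. The only cosmetic difference is that you explicitly cite \cref{lem:quotient-distortion} for the identification $V^\perp/G_V\cong(\mathbb{R}^m)^{[n-1]}/S_{n-1}$, whereas the paper treats this as self-evident.
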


\begin{proof}
Take $W:=\mathbb{R}^{m\times n}$, and let $G\leq\O(W)$ denote the image of the orthogonal representation $S_m\times S_n\curvearrowright W$.
Let $V\leq W$ denote the subspace of matrices whose first $n-1$ columns are zero.
By inspecting a generic member of $V$, it holds that the pointwise stabilizer $G_V\leq G$ of $V$ corresponds to the subgroup $\{\operatorname{id}\}\times S_{n-1}$.
Then \cref{thm.wednesday} gives
\[
c_2\big(\mathbb{R}^{m\times n}/(S_m\times S_n)\big)
=c_2(W/G)
\geq c_2(V^\perp/G_V)
=c_2\big((\mathbb{R}^m)^{[n-1]}/S_{n-1}\big),
\]
and the result follows from \cref{prop.unbounded growth}.
\end{proof}


\section{Discussion}   
\label{sec.discussion} 
\subsection{Bilipschitz invariants from polynomial invariants}

In this paper, we presented several bilipschitz embeddings of quotient spaces, but we have yet to discuss one important aspect of our approach.
(This aspect is perhaps more philosophical in nature, but we found it particularly useful, so we document it here for the sake of scaffolding.)
In what follows, we present a chronological account of how we used \textit{polynomial invariants} to discover the bilipschitz embeddings in \cref{sec.root unity,sec.so,sec.alternating}.

\begin{enumerate}
\item 
We started by observing that the optimal embedding of the quotient space $\mathbb R^{r\times n}/\O(r)$, given in \cref{prop.gram bounds}, can be viewed as a Lipschitz modification of a polynomial map (namely, $X\mapsto X^TX$), whose coordinate functions generate the algebra of $\O(r)$-invariant polynomials. 
Interestingly, when all eigenvalues of $X^TX$ are either $0$ or $1$, this modification reduces to $\sqrt{X^TX}=X^TX$, and so the optimal bilipschitz embedding restricts to a polynomial embedding over all such $X$.

\item 
Next, we worked to bilipschitzly embed the quotient space $\mathbb R^{r\times n}/\SO(r)$.
In pursuit of parallelism, we first recalled that the entries of the Gram matrix, together with the Pl\"ucker coordinates, generate the algebra of polynomial invariants in this setting. 
This led us to hunt for a bilipschitz modification of these invariants.
On the $\SO(r)$-invariant Stiefel manifold, where $XX^T = \Id_r$, we found that the Pl\"ucker coordinates themselves already provide a bilipschitz embedding, albeit with poor distortion. 
After some experimentation, we arrived at $X\mapsto\sigma_{\min}(X)\cdot \Plu(V_X)$ and proved \cref{thm.so psi lower bound}.
    
\item 
At this point, we were inspired to consider other orientation-preserving actions, so we turned to the index-$2$ alternating subgroup of a reflection group.
We started by recalling that the Weyl chamber embedding is optimal for the full reflection group. 
Our idea was to augment this with (a Lipschitz modification of) a polynomial that is invariant under the alternating group action but not under the full reflection group. 
This led us to the product $\prod_{i=1}^m\langle x,\alpha_i\rangle$, where $\alpha_1,\ldots,\alpha_m$ denote the normalized positive roots associated with a fixed Weyl chamber $C$. 
This polynomial behaves as desired since each member of the alternating subgroup flips an even number of signs in the product. 
Inspired by our minimum singular value modification of the Pl\"{u}cker coordinates, we modified this product to $\varepsilon\cdot\min_{i\in [m]}|\langle x,\alpha_i\rangle|$, where $\varepsilon = \det(w)$ for a reflection group element $w$ such that $wx\in C$. 
Geometrically, this corresponds to mapping $x$ to $wx\in C$, recording the determinant sign $\varepsilon$, and measuring the distance from $wx$ to the chamber walls. 
This interpretation led us to view the quotient space as the glued space $C\sqcup_{\partial C} C$.
    
\item 
We then hunted for bilipschitz embeddings of quotients by root-of-unity scalar actions, culminating in \cref{thm.root unity}. 
In this setting, the coordinate functions of the tensor power map $u\mapsto u^{\otimes r}$ generate the complex algebra of invariant polynomials. 
We started by showing that the normalized tensor power already yields a bilipschitz embedding, albeit with poor distortion. 
By augmenting with the optimal embedding $u\mapsto u\otimes \overline u/\|u\|$ for the space modulo the full circle action (see Example~17 and Corollary~37 in~\cite{CahillIM:24}), we achieved the Euclidean distortion after an appropriate scaling. 
Notably, this optimal embedding does not stem from a complex polynomial, but rather a real polynomial. 
More generally, this suggests that low-distortion bilipschitz embeddings may be more naturally derived from invariant polynomials involving both $z$ and $\overline z$, rather than from the usual complex polynomial invariants that avoid complex conjugation.
    
\item 
After all of this success in converting polynomial invariants into bilipschitz embeddings of quotient spaces, we hunted for a unifying through-line in our approach, and we eventually converged on the general theory of quotient--orbit embeddings presented in \cref{sec.reflection like}.
While this effectively captures the technical aspects of our constructions, it fails to convey their underlying polynomial inspiration.
We have yet to find a general approach to convert polynomial invariants into (optimal) bilipschitz invariants.
\end{enumerate}

\subsection{Open problems}

We conclude with several open problems. 
In \cref{sec.so,sec.alternating,sec.tran klien}, we established two-sided bounds on the Euclidean distortions of quotients by special orthogonal, alternating, and wallpaper group actions. 
What is the exact Euclidean distortion in these cases?
Note that for the wallpaper groups with orbifold signature $333$, $3{\ast}3$, $442$, or $632$, the quotient metric is unique up to a scale factor, and yet \cref{thm.wallpaper groups} fails to establish the Euclidean distortion.

In light of \cref{thm.upsilon-bound}, it would be interesting to compute the Euclidean contortion $\Upsilon(G)$ for groups beyond those handled in \cref{lem.upsilon compute}. 
For example, what are the values of $\Upsilon(C_4)$ and $\Upsilon(C_2\times C_2)$?
More ambitiously, if one computed the contortions of all finite simple groups, Theorem~\ref{thm.upsilon subnormal} would produce bounds on the contortions of all finite groups in terms of their simple subquotients.
Should we expect such contortion bounds to be relatively tight?
Or do there exist many groups with low contortion whose simple subquotients have large contortion?
On the asymptotic side, what is the infimum of $\alpha$ for which $\Upsilon(G)=O(|G|^\alpha)$ for all finite groups $G$?
See the end of \cref{sec.23-groups} for further discussion.

Finally, what is the minimum Hilbert space dimension needed to achieve the Euclidean distortion?
For example, the codomain in \cref{prop.gram bounds} can be restricted to the $\binom{n+1}{2}$-dimensional subspace of symmetric matrices, but can we decrease this dimension any further? 
In general, we conjecture that for any finite-dimensional Hilbert space $V$ and any isometry group $\Gamma\leq \E(V)$ with closed orbits, the orbit space $V/\Gamma$ achieves its Euclidean distortion in a finite-dimensional Hilbert space.

\section*{Acknowledgments}

BBS was partially supported by ONR N00014-22-1-2126 and NSF CCF 2212457.
HD was partially supported by NSF DMS 2147769.
DGM was partially supported by NSF DMS 2220304.

\addtocontents{toc}{\protect\setcounter{tocdepth}{1}}
\appendix

\section{Previous results in bilipschitz invariant theory}
\label{app.previous results}

We first collect two existing general tools for computing Euclidean distortion. 
The first regards the Euclidean distortion of a product space and is based on Lemma~39 in \cite{CahillIM:24}. 
While \cite{CahillIM:24} states the result in a slightly different setting, the proof is nearly identical. 
Even so, we include a proof.

\begin{proposition}[Euclidean distortion of product spaces]
\label{prop.max-product}
    Given nonempty metric spaces $(X,d_X)$ and $(Y,d_Y)$, if we endow $X\times Y$ with the metric
    \[
    d_{X\times Y}^2\big((x,y),(x',y')\big) 
    := d_X^2(x,x') + d_Y^2(y,y'),
    \]
    then it holds that
    \[
    c_2(X\times Y) 
    = \max\{c_2(X),c_2(Y)\}.
    \]
\end{proposition}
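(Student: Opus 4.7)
The plan is to prove the two inequalities separately, each via an essentially canonical construction. For the lower bound $c_2(X \times Y) \geq \max\{c_2(X), c_2(Y)\}$, I would fix any points $x_0 \in X$ and $y_0 \in Y$ and observe that the slice maps $x \mapsto (x, y_0)$ and $y \mapsto (x_0, y)$ are isometric embeddings of $X$ and $Y$ into $X \times Y$ (since $d_{X \times Y}^2((x, y_0), (x', y_0)) = d_X^2(x, x') + 0$, and similarly for the other slice). Because distortion only decreases when passing to sub-metric spaces, we obtain $c_2(X) \leq c_2(X \times Y)$ and $c_2(Y) \leq c_2(X \times Y)$, giving the desired lower bound.

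For the upper bound $c_2(X \times Y) \leq \max\{c_2(X), c_2(Y)\}$, write $c := \max\{c_2(X), c_2(Y)\}$ and fix $\varepsilon > 0$. Choose embeddings $f \colon X \to H_X$ and $g \colon Y \to H_Y$ into Hilbert spaces with distortions at most $c + \varepsilon$. By rescaling (which leaves distortion unchanged), I may assume both $f$ and $g$ have lower Lipschitz bound $1$, so their upper Lipschitz bounds are at most $c + \varepsilon$. Define $F \colon X \times Y \to H_X \oplus H_Y$ by $F(x, y) := (f(x), g(y))$. Then
\[
\|F(x, y) - F(x', y')\|^2 = \|f(x) - f(x')\|^2 + \|g(y) - g(y')\|^2,
\]
and applying the Lipschitz bounds coordinatewise yields
\[
d_{X \times Y}^2\bigl((x,y),(x',y')\bigr) \leq \|F(x,y) - F(x',y')\|^2 \leq (c+\varepsilon)^2 \cdot d_{X \times Y}^2\bigl((x,y),(x',y')\bigr),
\]
because the quadratic form of the product metric allows the bounds to combine additively without cross terms. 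Hence $\kappa(F) \leq c + \varepsilon$, and letting $\varepsilon \to 0$ gives $c_2(X \times Y) \leq c$.

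There is essentially no obstacle here; the reason the equality holds (rather than merely an inequality) is that the product metric is defined as the $\ell^2$-sum of the component metrics, which matches the $\ell^2$-structure of a Hilbert space direct sum perfectly. The one subtlety I would be careful about is ensuring both embeddings can be taken with a common lower Lipschitz constant (handled by rescaling) so that the two coordinatewise bounds can be combined with the same multiplicative factor, yielding a genuine bilipschitz bound on $F$ rather than two unrelated bounds.
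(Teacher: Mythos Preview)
Your proof is correct and follows essentially the same approach as the paper's: both argue the lower bound via isometric slice embeddings and the upper bound via the product of rescaled embeddings into a Hilbert direct sum. Your explicit $\varepsilon$-argument makes the passage to the infimum slightly more transparent than the paper's version, but the underlying idea is identical.
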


\begin{proof}
    Let $y\in Y$ be arbitrary. Then $X$ is isometric to the subspace $X\times \{y\} \subseteq X\times Y$, so $c_2(X)\leq c_2(X\times Y)$ and similarly for $Y$.

    For the other inequality, we may assume that $c_2(X)$ and $c_2(Y)$ are finite, since otherwise the inequality is trivial. Let $f_X\colon X\to H_X$ and $f_Y\colon Y \to H_Y$ be bilipschitz embeddings into Hilbert spaces. Without loss of generality, we may assume the optimal lower Lipschitz constants for both maps are 1 by rescaling if necessary. Use $\beta_X$ and $\beta_Y$ to denote the respective optimal upper Lipschitz constants. We need only show that the product map
    \[
    f_X \times f_Y
    \colon X\times Y \to H_X\oplus H_Y
    \]
    has optimal upper Lipschitz constant at most $\max\{\beta_1,\beta_2\}$. Indeed,
    \begin{align*}
        \|(f_X\times f_Y)(x,y) - (f_X\times f_Y)(x',y')\|_{H_X\oplus H_Y}^2
        & = \|f_X(x)-f_X(x')\|_{H_X}^2 + \|f_Y(y)-f_Y(y')\|_{H_Y}^2
        \\ & \leq
        \beta_X^2\cdot d_X^2(x,x') + \beta_Y^2\cdot d_Y^2(y,y')
        \\ & \leq \max\{\beta_X^2,\beta_Y^2\} \cdot d_{X\times Y}^2\big((x,y),(x',y')\big),
    \end{align*}
    as desired.
\end{proof}

The second general tool for computing Euclidean distortion allows one to pass from a metric space to the finite subspaces thereof. 
Note that the proof given in \cite{CahillIM:24} uses an ultraproduct construction and hence relies on some nonconstructive axiom such as the axiom of choice.

\begin{proposition}[Euclidean distortion is finitely determined, Proposition~31 \cite{CahillIM:24}]
\label{prop.finitely-determined}
    Let $X$ be a metric space. Then
    \[c_2(X) = \sup_{\substack{B\subseteq X\\|B|<\infty}} c_2(B).\]
\end{proposition}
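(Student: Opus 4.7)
The inequality $\sup_{B\subseteq X,\, |B|<\infty} c_2(B) \leq c_2(X)$ is immediate: any finite subspace $B \subseteq X$ embeds isometrically into $X$, so restricting any distortion-$c$ embedding of $X$ yields a distortion-at-most-$c$ embedding of $B$. For the reverse inequality, set $D := \sup_B c_2(B)$, assume $D < \infty$ (otherwise there is nothing to show), fix $\varepsilon > 0$, and fix a basepoint $x_0 \in X$ (assume $X$ is nonempty). The strategy is to build a globally positive semidefinite kernel $K \colon X\times X \to \mathbb{R}$ satisfying
\[
d_X(x,y)^2 \;\leq\; K(x,x) + K(y,y) - 2K(x,y) \;\leq\; (D+\varepsilon)^2 d_X(x,y)^2
\]
for all $x,y \in X$; the standard Gram--matrix / RKHS construction then converts $K$ into an embedding of $X$ into a Hilbert space of distortion at most $D+\varepsilon$, so letting $\varepsilon \to 0^+$ yields $c_2(X) \leq D$.

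For each finite $B \subseteq X$ containing $x_0$, choose an embedding $f_B \colon B \to H_B$ with distortion at most $D+\varepsilon$; rescale so its lower Lipschitz constant is $1$ and the upper is at most $D+\varepsilon$, then translate so $f_B(x_0)=0$. Define $K_B(x,y) := \langle f_B(x), f_B(y)\rangle_{H_B}$ on $B\times B$, and extend by zero to all of $X\times X$. Cauchy--Schwarz gives $|K_B(x,y)| \leq M_{x,y}$ on $B\times B$, where $M_{x,y} := (D+\varepsilon)^2 d_X(x,x_0)\, d_X(y,x_0)$, and the zero extension trivially respects the same bound off $B\times B$. Crucially, the extended $K_B$ remains globally positive semidefinite: for any finite $F\subseteq X$ and any reals $\{a_z\}_{z\in F}$, the only surviving terms in $\sum_{z,w\in F} a_z a_w K_B(z,w)$ are those with $z,w \in F\cap B$, forming a principal submatrix of the PSD Gram matrix of $f_B|_{F\cap B}$.

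Now apply Tychonoff. The product $\mathcal{K} := \prod_{(x,y) \in X\times X} [-M_{x,y}, M_{x,y}]$ is compact in the product topology. For each finite $B \subseteq X$ containing $x_0$, let $C_B \subseteq \mathcal{K}$ consist of those $K$ that are globally PSD and that satisfy the two-sided distortion inequality on $B\times B$. Each PSD condition $\sum a_i a_j K(x_i,x_j)\geq 0$ and each distortion condition $d_X(x,y)^2\leq K(x,x)+K(y,y)-2K(x,y)\leq(D+\varepsilon)^2 d_X(x,y)^2$ is a preimage of a closed half-line under a continuous finite linear combination of evaluation functionals, so $C_B$ is closed. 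For any finite collection $B_1,\dots,B_k$, the extended kernel $K_{B_1\cup\cdots\cup B_k\cup\{x_0\}}$ lies in every $C_{B_i}$, so the family $\{C_B\}$ has the finite intersection property. Compactness yields some $K \in \bigcap_B C_B$. Such a $K$ is globally PSD, and since every pair $\{x,y\}$ is contained in the finite set $B = \{x_0,x,y\}$, the required distortion bound holds on all of $X\times X$, producing the desired embedding.

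The main obstacle is the final compactness step: it is the nonconstructive heart of the argument, and is essentially equivalent to the ultraproduct argument referenced in \cite{CahillIM:24}. The delicate bookkeeping is in the kernel construction—verifying that the zero extension truly preserves \emph{global} positive semidefiniteness (so each $C_B$ is demonstrably nonempty with an explicit witness), and ensuring the fixed intervals $[-M_{x,y}, M_{x,y}]$ are wide enough to hold every Gram entry that arises; once these are in place, closedness of $C_B$ and the finite intersection property are routine.
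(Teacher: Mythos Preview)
Your argument is correct. The paper does not supply its own proof of this proposition; it simply cites Proposition~31 of \cite{CahillIM:24} and remarks that the proof there ``uses an ultraproduct construction and hence relies on some nonconstructive axiom such as the axiom of choice.'' Your route is genuinely different in presentation: rather than taking an ultralimit of the finite embeddings themselves (which requires building an ultraproduct of Hilbert spaces), you pass to the level of Gram kernels and use Tychonoff compactness on the product $\prod_{(x,y)}[-M_{x,y},M_{x,y}]$, then invoke the Moore--Aronszajn / RKHS realization at the very end. The two approaches are morally equivalent---both extract a limit object from a net of finite witnesses via a nonconstructive compactness principle, and you rightly note that Tychonoff here plays the same role as the ultrafilter---but your version is arguably more elementary, since it never needs to define or manipulate an ultraproduct Hilbert space, and the closedness and finite-intersection verifications are entirely concrete linear-inequality checks on real-valued kernels. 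The only point worth flagging is that your normalization (lower Lipschitz bound exactly $1$) tacitly uses $D>0$, i.e., that $X$ has at least two points; the one-point case is of course trivial.
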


In addition to the above tools, we also use previously established Euclidean distortions of a few spaces. 
Some of these results are mentioned in Table~\ref{table1}, but we collect them here as well for easy reference.

\begin{proposition}\label{prop.known-distortions}
The following statements hold:
    \begin{enumerate}[label=(\alph*)]
        \item\label{it.known_distortions.pm_identity} \emph{(\cite[Corollary 36]{CahillIM:24})} Let $V$ be a real Hilbert space with $\dim(V)\geq 2$ and set $G:= \{\pm \Id_V\}$. Then $c_2(V/G) = \sqrt{2}$.
        \item\label{it.known_distortions.circle_action} \emph{(\cite[Corollary 37]{CahillIM:24})} Let $V$ be a complex Hilbert space with $\dim(V)\geq 2$ and set $G:= \{z\in\mathbb C:|z|=1\}$. Then $c_2(V/G) = \sqrt{2}$.
        \item\label{it.known_distortions.roots_of_unity} \emph{(\cite[Corollary 38]{CahillIM:24})} Let $G := \langle e^{2\pi i/r}\rangle \leq U(1)$. Then $c_2(\mathbb C/G) = r\sin\left(\frac{\pi}{2r}\right)$.
        \item\label{it.known_distortions.circle}\emph{(\cite[Theorem 6.1]{HeimendahlLVZ:22})} For each $c>0$, $c_2(\mathbb R/c\mathbb Z) = \frac{\pi}{2}$.
    \end{enumerate}
\end{proposition}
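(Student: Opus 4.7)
The plan is to prove the four parts separately using a common strategy: construct an explicit embedding $\phi$ that is positively homogeneous and maps the unit sphere of its domain into the unit sphere of its codomain, then invoke Theorem~13 of~\cite{CahillIM:24} to reduce the upper-bound computation to a one-variable problem parametrized by an inner product on the unit sphere. For the matching lower bound, I would combine \cref{prop.finitely-determined} with an SDP dual certificate exhibited on a small, highly symmetric finite sub-configuration of the orbit space, following the template in Section~7 of~\cite{CahillIM:24}.

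For (a) and (b), the candidate embeddings are $\phi(x) = x\otimes x/\|x\|$ on the real $V$ and $\phi(z) = z\otimes\bar z/\|z\|$ on the complex $V$, respectively, with $\phi(0) := 0$ in both cases. A direct computation shows that both maps are well defined on the corresponding quotient and that for any two unit vectors $u,v$ with $t := |\langle u,v\rangle| \in [0,1]$, one has $\|\phi(u)-\phi(v)\|^2 = 2-2t^2$ while the squared quotient distance is $2-2t$; the ratio $1+t$ then ranges over $[1,2]$, giving an upper bound of $\sqrt{2}$ on the distortion. The hypothesis $\dim V \geq 2$ enters only in the lower bound, which selects a pair of orthogonal unit vectors in $V$, considers a finite sub-configuration of their orbits, and produces an invariant SDP dual certificate (leveraging the Schur--Weyl decomposition of $V^{\otimes 2}$) verifying that no Euclidean embedding of that configuration can achieve distortion less than $\sqrt{2}$.

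For (c), I would simply specialize \cref{thm.root unity}, proved later in the paper, to $n=1$; it supplies both the optimal embedding $z \mapsto \bigl(\cos(\tfrac{\pi}{2r})|z|,\,\sin(\tfrac{\pi}{2r}) z^r/|z|^{r-1}\bigr)$ and the matching lower bound $r\sin(\tfrac{\pi}{2r})$. For (d), the distortion is scale-invariant, so I would reduce to $c = 2\pi$ and use the standard embedding $\theta \mapsto (\cos\theta,\sin\theta)$ into $\mathbb R^2$. The quotient distance between angles is the arc length $s \in [0,\pi]$ and the Euclidean distance is $2\sin(s/2)$; the monotonicity of $s \mapsto s/(2\sin(s/2))$ on $[0,\pi]$ produces an upper bound of $\pi/2$. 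The matching lower bound is a classical Fourier--SDP computation, as in~\cite{HeimendahlLVZ:22}: an invariant dual feasible solution evaluated on the regular $n$-gon sub-configuration in $S^1$, combined with \cref{prop.finitely-determined}, forces distortion $\geq \pi/2$ in the limit $n\to\infty$.

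The main obstacle throughout is assembling the lower-bound SDP certificates; the upper bounds, while satisfying, reduce in each case to elementary one-variable calculus. For (a) and (b), the lower-bound dual exploits the $G$-isotypic decomposition to enforce that any symmetry-broken embedding wastes distortion, while for (d) the dual leverages the Fourier basis on $S^1$. I would also need to verify that the embeddings constructed for (a)--(c) are genuinely defined on the quotient, i.e., they are invariant under the relevant group action---a routine check in each case.
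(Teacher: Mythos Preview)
The paper does not prove this proposition: it appears in Appendix~A purely as a summary of results quoted from~\cite{CahillIM:24} and~\cite{HeimendahlLVZ:22}, with no accompanying argument. There is therefore no proof in the paper to compare against; your proposal goes well beyond what the paper attempts.

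That said, your plan for part~(c) is circular. You propose to specialize \cref{thm.root unity} to $n=1$, but the proof of \cref{thm.root unity} obtains its lower bound $c_2(\mathbb{C}^n/C_r)\geq r\sin(\tfrac{\pi}{2r})$ by restricting to a one-dimensional complex subspace and then invoking Corollary~38 of~\cite{CahillIM:24} --- which is exactly part~(c) of the present proposition. The bilipschitz analysis of the map $F$ in \cref{thm.root unity} does independently yield $\kappa(F)\leq r\sin(\tfrac{\pi}{2r})$, so you would recover the \emph{upper} bound for~(c) this way, but the matching lower bound still requires the SDP certificate from~\cite{CahillIM:24}, which you have not supplied. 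Your outlines for (a), (b), and (d) point in the right direction, but they too are vague at precisely the nontrivial step (the lower-bound dual certificates), which is where essentially all of the content of the cited references resides.
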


\section{Bilipschitz embedding of a compact quotient}
\label{app.finite distortion}
This section presents a streamlined proof of the following result:
\begin{proposition}
    \label{prop.finite distortion}
    Let $M$ be a compact Riemannian manifold equipped with an isometric action by a compact group $G$. Then $c_2(M/G)<\infty$. 
\end{proposition}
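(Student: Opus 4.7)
The plan is to proceed by induction on $\dim M$, combining the slice theorem for isometric compact group actions with the Makarychev--Makarychev union lemma \cite{MakarychevM:16} and a standard cone-over-sphere reduction. The base case $\dim M = 0$ is trivial, since $M$ is then a finite discrete set and $M/G$ is a finite metric space.

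For the inductive step, suppose the result holds for all compact Riemannian manifolds of dimension strictly less than $n$, and let $\dim M = n$. At each $x \in M$, the slice theorem produces a $G$-invariant tubular neighborhood $U_x \supseteq G \cdot x$ together with a $G$-equivariant diffeomorphism $U_x \cong G \times_{G_x} N_x$, where $G_x \leq G$ is the compact stabilizer and $N_x$ is a small $G_x$-invariant open ball around the origin in the normal space $W_x := (T_x(G \cdot x))^\perp \subseteq T_x M$, an orthogonal $G_x$-representation. Taking $G$-quotients yields a natural homeomorphism $U_x/G \cong N_x/G_x$; by further shrinking $U_x$, one arranges this homeomorphism to be bilipschitz between the induced quotient metrics, with bilipschitz constant approaching $1$ as the ball shrinks (comparing the ambient Riemannian structure on $M$ to the linear one on $W_x$ via the exponential map). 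By compactness of $M$, finitely many such neighborhoods $U_{x_1},\ldots,U_{x_k}$ cover $M$; their images cover $M/G$, and the union lemma reduces the problem to bounding $c_2(N_{x_i}/G_{x_i})$ for each $i$.

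Each $N_{x_i}/G_{x_i}$ is a sub-metric space of the linear quotient $W_{x_i}/G_{x_i}$, which is itself a Euclidean cone over the spherical quotient $S(W_{x_i})/G_{x_i}$ by radial homogeneity of the action. A standard cone-over-sphere construction, mapping the angular part via a bilipschitz embedding into a unit sphere of a Hilbert space and then scaling by the radial coordinate, shows that $c_2(W_{x_i}/G_{x_i})$ is controlled by $c_2(S(W_{x_i})/G_{x_i})$ up to an absolute constant. If $\dim W_{x_i} = 0$ the quotient is a point; otherwise $S(W_{x_i})$ is a compact Riemannian manifold of dimension $\dim W_{x_i} - 1 \leq n - 1 < n$ with an isometric action of the compact group $G_{x_i}$, so the inductive hypothesis yields $c_2(S(W_{x_i})/G_{x_i}) < \infty$, completing the induction.

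The main obstacle will be the bilipschitz comparison between the ambient quotient metric on $U_x/G$ and the linear quotient metric on $N_x/G_x$: both are defined by infima over $G$-orbits, but with respect to different underlying Riemannian structures. This is ultimately a local estimate—the exponential map at $x$ is a bilipschitz isomorphism on a small ball in $T_x M$, with constants tending to $1$ as the ball shrinks—which must be pushed through the $G$-action using compactness and the $G$-equivariance of the slice-theorem identification; arranging this uniformly across a finite cover requires some care. A secondary technical point is making the cone-over-sphere reduction quantitative enough to yield an explicit (finite) distortion bound.
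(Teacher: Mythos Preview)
Your proposal is correct and follows essentially the same approach as the paper's proof: induction on $\dim M$, the slice theorem, the Makarychev--Makarychev gluing lemma, and a cone-over-sphere reduction to the inductive hypothesis. For the bilipschitz comparison you flag as the main obstacle, the paper factors it into two steps---the slice-theorem diffeomorphism $G\times_{G_x}B\to U_x$ is bilipschitz because diffeomorphisms are bilipschitz on relatively compact sets, and then an explicit metric computation shows that $(G\times_{G_x}B)/G$ (with the product Riemannian metric on $G\times B$) is \emph{isometric} to $B/G_x$---so all the distortion lives in the first step.
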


In particular, if $V$ is a finite-dimensional real inner product space with unit sphere $S$ and $G\leq \Oname(V)$ is compact, then $c_2(S/G)<\infty$, and so $c_2(V/G)<\infty$ by Section~4 of~\cite{CahillIM:24}.

The first key tool to the proof of \cref{prop.finite distortion} is the following \emph{gluing theorem}, which reduces the embedding problem to local neighborhoods. It is an immediate consequence of Theorem~1.1 in~\cite{MakarychevM:16}.
\begin{proposition}[Gluing theorem]
    \label{prop.gluing theorem}
    Let $X$ be a metric space, and let $A,B\subseteq X$ be subspaces. Then
    \[c_2(A\cup B)\leq 11\cdot c_2(A)\cdot c_2(B).\]
\end{proposition}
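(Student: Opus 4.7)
The plan is to invoke Theorem~1.1 of Makarychev--Makarychev \cite{MakarychevM:16}, which establishes that any metric space expressible as a union $A \cup B$ of two Euclidean subspaces is itself Euclidean, with an explicit bound on its distortion in terms of $c_2(A)$ and $c_2(B)$. Their theorem delivers a bound of the form $c_2(A \cup B) \le P(c_2(A), c_2(B))$ for some explicit polynomial expression $P$ that contains both a product term in $c_2(A) \cdot c_2(B)$ and lower-order terms linear in the individual distortions. Our task reduces to massaging this expression into the clean product form $11 \cdot c_2(A) \cdot c_2(B)$.

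The first step is simply to cite the Makarychev--Makarychev theorem and read off the explicit numerical constants appearing in their bound. The second step is to exploit the trivial fact that any metric space has Euclidean distortion at least $1$, so $c_2(A), c_2(B) \ge 1$. Using this elementary inequality, every linear term in the bound can be absorbed into the product term: for instance, $c_2(A) \le c_2(A) c_2(B)$ and $c_2(A) + c_2(B) \le 2\, c_2(A) c_2(B)$. The final step is the arithmetic verification that, after this absorption, the resulting multiplicative constant is at most $11$.

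The only potential obstacle is careful bookkeeping of the constants appearing in \cite{MakarychevM:16}, since their statement may naturally yield a slightly different constant than $11$ before simplification; there is no deep mathematical content here beyond what is already contained in the cited result. If one instead wanted to give a self-contained derivation, the underlying strategy in \cite{MakarychevM:16} is to embed $A$ and $B$ separately into Hilbert spaces and then glue these embeddings together using an auxiliary distance-to-overlap coordinate, in a spirit analogous to the quotient--orbit embedding of \cref{thm.align preserve} in the present paper. The distortion analysis then balances the distortions of the two component embeddings against the slack introduced by the gluing coordinate, with the product structure $c_2(A) \cdot c_2(B)$ arising naturally from a triangle inequality argument: any two points, one in $A$ and one in $B$, are compared by routing through a nearly optimal intermediate point in $A \cap B$, so the embedding of each piece contributes one factor.
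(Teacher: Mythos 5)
Your proposal is correct and matches the paper's approach exactly: the paper gives no independent argument, simply stating the proposition as an immediate consequence of Theorem~1.1 in \cite{MakarychevM:16}, whose bound of the form $7\,c_2(A)\,c_2(B)+2\big(c_2(A)+c_2(B)\big)$ is absorbed into $11\,c_2(A)\,c_2(B)$ using $c_2(A),c_2(B)\geq 1$, just as you describe. The only bookkeeping worth noting is that one applies their theorem to near-optimal embeddings of $A$ and $B$ and passes to the infimum, which is harmless since the bound is continuous in the two distortions.
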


We will also use the fact that bilipschitz behavior is preserved under diffeomorphic changes over compact sets:

\begin{proposition}[Diffeomorphisms are bilipschitz over compact sets]
    \label{prop.diff local bilip}
    Let $f\colon N_1\to N_2$ be a diffeomorphism between Riemannian manifolds. Then for every relatively compact subspace $X\subseteq N_1$, the restriction $f|_{X}$ is bilipschitz.
\end{proposition}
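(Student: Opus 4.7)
The plan is to use compactness of $\overline{X}$ (and of $\overline{f(X)}$) to obtain uniform bounds on the operator norms of $df$ and $df^{-1}$, then translate these pointwise derivative bounds into bilipschitz bounds on distances via an infimum-over-paths argument. Since $f$ is a homeomorphism, it preserves connected components and sends points at infinite distance to points at infinite distance, so the bilipschitz inequalities are vacuous between different components; I may therefore restrict attention to pairs of points in the same component of $N_1$.

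First, I would set $K:=\overline{X}$ and pick a precompact open neighborhood $U$ of $K$ with $\overline{U}$ compact. Because $d_1$ induces the manifold topology, the continuous positive function $p\mapsto d_1(p,\,N_1\setminus U)$ attains a positive minimum $\delta>0$ on $K$, so every $d_1$-ball of radius $\delta$ centered at a point of $K$ lies inside $U$. Setting $\beta_0:=\sup_{p\in \overline{U}}\|df_p\|_{\mathrm{op}}<\infty$ (finite by continuity of $df$ and compactness of $\overline{U}$), for any $x,x'\in X$ with $d_1(x,x')<\delta/2$ and any smooth path $\gamma$ from $x$ to $x'$ of length less than $\delta$, the path $\gamma$ stays in $\overline{U}$ and
\[
\operatorname{length}(f\circ\gamma)=\int \|df_{\gamma(t)}(\gamma'(t))\|\,dt \leq \beta_0\cdot\operatorname{length}(\gamma).
\]
Taking the infimum over such $\gamma$ yields $d_2(f(x),f(x'))\leq \beta_0\, d_1(x,x')$ in the ``local'' regime. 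For the ``global'' regime $d_1(x,x')\geq \delta/2$, I would use that $f(K)$ is compact, hence has finite $d_2$-diameter $D$, so $d_2(f(x),f(x'))\leq D\leq (2D/\delta)\,d_1(x,x')$. Combining gives a uniform upper Lipschitz bound $\beta:=\max\{\beta_0,2D/\delta\}$.

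For the lower Lipschitz bound I would invoke symmetry: $f^{-1}\colon N_2\to N_1$ is also a diffeomorphism, and $f(X)\subseteq f(\overline{X})$ is relatively compact since $f(\overline{X})$ is compact, hence closed. Applying the preceding argument to $f^{-1}$ restricted to $f(X)$ produces a constant $\beta'>0$ with $d_1(x,x')=d_1(f^{-1}(f(x)),f^{-1}(f(x')))\leq \beta'\,d_2(f(x),f(x'))$, so $d_2(f(x),f(x'))\geq (\beta')^{-1} d_1(x,x')$, and $f|_X$ is bilipschitz.

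The main obstacle is the local step: guaranteeing that approximate-length-minimizing paths stay inside a compact set on which $\|df\|$ is controlled. This is exactly where the two ingredients---that the Riemannian distance metrizes the manifold topology, and that $\overline{X}$ has a precompact neighborhood---combine to force short paths out of $K$ to remain in $\overline{U}$. Everything else is a mean-value-type estimate and a routine splitting into short-range versus long-range pairs.
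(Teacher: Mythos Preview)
Your proof is correct and takes a genuinely different route from the paper's. The paper argues by contradiction: assuming $f|_X$ is not bilipschitz, it extracts sequences $(x_k),(y_k)$ in $X$ with $d_{N_2}(f(x_k),f(y_k))/d_{N_1}(x_k,y_k)\to 0$, passes to convergent subsequences using compactness of $\overline X$, and obtains a contradiction either from injectivity of $f$ (if the two limits differ) or from the standard fact that diffeomorphisms of Riemannian manifolds are locally bilipschitz (if the limits coincide). Your approach is direct and constructive: bound $\|df\|$ uniformly on a compact neighborhood via the extreme value theorem, control path lengths for nearby points by forcing short paths to stay in that neighborhood, and handle far-apart pairs by the finite diameter of $f(\overline X)$. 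Your argument in principle yields explicit Lipschitz constants, whereas the paper's is shorter because it offloads the local analysis to the unproven-but-standard ``diffeomorphisms are locally bilipschitz'' fact---which is essentially your local step. Both proofs ultimately invoke the $f\leftrightarrow f^{-1}$ symmetry to pass from one Lipschitz bound to the other.
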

\begin{proof}
    Suppose for contradiction that $f|_{X}$ is not bilipschitz for some relatively compact subspace $X\subseteq N_1$. Then there exist sequences $(x_k)_{k=1}^\infty$, $(y_k)_{k=1}^\infty$ in $X$ such that $x_k\neq y_k$ and
    \begin{equation}
    \label{eq.bilip zero}
    \frac{d_{N_2}(f(x_k),f(y_k))}{d_{N_1}(x_k,y_k)}\to 0.
    \end{equation}
    By replacing $X$ with its closure if necessary, we may assume without loss of generality that $X$ is compact. Thus, we may pass to subsequences and assume $x_k\to x$ and $y_k\to y$  for some $x,y\in X$.
    
    If $x\neq y$, then \eqref{eq.bilip zero} implies $f(x)=f(y)$, contradicting the injectivity of $f$. Meanwhile, if $x=y$, then $f$ fails to be locally bilipschitz at $x$, which contradicts the fact that diffeomorphisms of Riemannian manifolds are locally bilipschitz.
\end{proof}

We will also make essential use of a well-behaved $G$-equivariant diffeomorphism provided by the \emph{slice theorem}. In what follows, recall the notion of an \emph{isotropy representation} introduced at the beginning of \cref{sec.isotropy}. The following result commonly referred to as the \emph{slice theorem} or the \emph{tube theorem} is a reformulation of the main content of Section~3.2 in~\cite{AlexandrinoB:15}, adapted to the setting of an isometric action by a compact Lie group. 
\begin{proposition}[Slice theorem]
    \label{prop.slice theorem}
    Let $M$ be a compact Riemannian manifold equipped with an isometric action by a compact Lie group $G$, where $G$ is endowed with a bi-invariant Riemannian metric. Fix a point $x\in M$, let $G_x$ denote its (compact) pointwise stabilizer, and let $N_x$ denote the orthogonal complement of $T_x(G\cdot x)$ in $T_xM$. Then there exists $R>0$ such that for any open ball $B\subseteq N_x$ of radius less than $R$, it holds that
    \begin{itemize}
        \item[(i)] $B$ is $G_x$-invariant under the isotropy representation,
        \item[(ii)] $G\cdot \exp_x(B)$ is open in $M$,
        \item[(iii)] $G_x$ acts freely and isometrically on the product Riemannian manifold $G\times B$ via the action
        \[h\cdot (g,y):= (gh^{-1},hy),\]
        and
        \item[(iv)] letting $G\times_{G_x} B := (G\times B)/G_x$ denote the corresponding orbit space, equipped with the quotient Riemannian metric, and giving it an isometric left $G$-action via 
        \[k\cdot [(g,y)]:= [(kg,y)],\]
        it holds that the map 
    \[G\times_{G_x} B\to G\cdot \exp_x(B), \quad [(g,y)]\mapsto g\cdot \exp_x(y)\]
    is a $G$-equivariant diffeomorphism.
    \end{itemize}
\end{proposition}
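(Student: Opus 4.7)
The plan is to establish (i) and (iii) directly, then prove (iv) via the inverse function theorem combined with a compactness argument, and deduce (ii) as a byproduct of (iv).

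Part (i) holds because $G_x$ acts orthogonally on $T_xM$ via the isotropy representation and preserves $T_x(G\cdot x)$, hence also its orthogonal complement $N_x$; any origin-centered ball in $N_x$ is therefore $G_x$-stable. Part (iii) is equally short: $h\cdot(g,y)=(g,y)$ forces $gh^{-1}=g$ and hence $h=e$, so the action is free. It is isometric because right translation on $G$ preserves the bi-invariant metric and the isotropy action on $N_x$ is orthogonal, and $G_x$ acts diagonally on the product.

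For (iv), I would define $\tilde\Phi\colon G\times N_x \to M$ by $(g,y)\mapsto g\cdot\exp_x(y)$. Using $G$-equivariance of the Riemannian exponential, for $h\in G_x$ we have $h\cdot\exp_x(y)=\exp_{hx}(dh_x\, y)=\exp_x(hy)$, so $\tilde\Phi(gh^{-1},hy)=\tilde\Phi(g,y)$, and $\tilde\Phi$ descends to a smooth $G$-equivariant map $\Phi\colon G\times_{G_x}N_x\to M$. To compute $d\Phi$ at $[(e,0)]$, fix an $\operatorname{Ad}(G_x)$-invariant complement $\mathfrak m$ to $\mathfrak g_x$ in $\mathfrak g$ (obtained by averaging an inner product against Haar measure on $G_x$). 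The orbit of $G_x$ through $(e,0)$ in $G\times N_x$ is $G_x\times\{0\}$, whose tangent space is $\mathfrak g_x\oplus\{0\}$, so $T_{[(e,0)]}(G\times_{G_x}N_x)\cong \mathfrak m\oplus N_x$, and $d\Phi_{[(e,0)]}$ sends $(X,v)$ to $X^*|_x + v$, where $X^*$ denotes the fundamental vector field on $M$. The map $\mathfrak m\to T_x(G\cdot x)$, $X\mapsto X^*|_x$, is an isomorphism because its kernel on $\mathfrak g$ is exactly $\mathfrak g_x$; combined with $T_xM=T_x(G\cdot x)\oplus N_x$, the differential is an isomorphism. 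By $G$-equivariance, $d\Phi_{[(g,0)]}$ is an isomorphism for every $g\in G$, and the zero section $G\times_{G_x}\{0\}\cong G/G_x$ is compact; the inverse function theorem, applied uniformly along this compact set, yields $R_0>0$ such that $\Phi$ is a local diffeomorphism on $G\times_{G_x}B_0$, where $B_0$ is the open ball of radius $R_0$ in $N_x$. Since $\Phi(G\times_{G_x}B_0)=G\cdot\exp_x(B_0)$ is then a union of open sets of $M$, this establishes (ii).

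The main obstacle is upgrading to a global diffeomorphism by proving injectivity after possibly shrinking $R$. Suppose, toward contradiction, that no $R\leq R_0$ makes $\Phi$ injective on $G\times_{G_x}B_R$. Then there exist sequences $g_n\in G$ and $y_{1,n},y_{2,n}\in N_x$ with $\|y_{i,n}\|\to 0$ such that $[(g_n,y_{1,n})]\neq[(e,y_{2,n})]$ yet $g_n\exp_x(y_{1,n})=\exp_x(y_{2,n})$. By compactness of $G$ we may assume $g_n\to g$, and continuity of the action gives $g\cdot x=x$, i.e., $g\in G_x$. In $G\times_{G_x}N_x$ we have $[(g,0)]=[(e,0)]$, since taking $h=g\in G_x$ in the equivalence $(g,0)\sim(gh^{-1},h\cdot 0)$ yields $(e,0)$. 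Therefore $[(g_n,y_{1,n})]\to[(e,0)]$, and likewise $[(e,y_{2,n})]\to[(e,0)]$; this produces two distinct sequences in $G\times_{G_x}B_{R_0}$ converging to $[(e,0)]$ with matching images under $\Phi$, contradicting the local injectivity of $\Phi$ near $[(e,0)]$. Hence some $R>0$ makes $\Phi$ bijective on $G\times_{G_x}B$, and combined with the local diffeomorphism property, $\Phi$ is a $G$-equivariant diffeomorphism onto $G\cdot\exp_x(B)$, completing (iv).
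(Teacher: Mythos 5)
Your argument is correct, but note that the paper does not actually prove this proposition: it is stated as a reformulation of the main content of Section~3.2 of \cite{AlexandrinoB:15} and is simply cited, so any complete proof is already "a different route" from the paper's. What you supply is essentially the standard tubular-neighborhood proof of the tube/slice theorem, and it is sound: (i) and (iii) are the easy orthogonality/freeness observations; for (iv) you build the associated bundle $G\times_{G_x}N_x$, check that $[(g,y)]\mapsto g\cdot\exp_x(y)$ is well defined via equivariance of the Riemannian exponential, verify that the differential is invertible along the compact zero section $G/G_x$ (using that $X\mapsto X^*|_x$ kills exactly $\mathfrak{g}_x$ and spans $T_x(G\cdot x)$), get a uniform radius by the tube lemma, and upgrade local to global injectivity by the usual compactness/contradiction argument in which two offending sequences are normalized (via $G$-equivariance) and forced to converge to $[(e,0)]$, contradicting local injectivity there; (ii) then falls out since a local diffeomorphism is open. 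This matches in spirit what the cited reference does (tubular neighborhood of the orbit via the normal exponential), so the trade-off is simply that the citation keeps the appendix short, while your version makes it self-contained, with the only inputs being naturality of $\exp$ under isometries, the quotient manifold theorem for the free compact $G_x$-action (which you use implicitly and could state explicitly), and compactness of $G$; you might also flag explicitly that surjectivity of $\mathfrak{m}\to T_x(G\cdot x)$ follows from the dimension count $\dim G-\dim G_x$, and that once your single radius $R$ is found, every smaller origin-centered ball inherits all four conclusions, which is the form in which the proposition is stated.
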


With this setup, we are ready to give a proof of \cref{prop.finite distortion}.

\begin{proof}[Proof of \cref{prop.finite distortion}]
    We proceed by induction on the dimension of $M$. For the base case $\dim(M)=0$, the quotient $M/G$ is a finite metric space, and any injective embedding into a Hilbert space is automatically bilipschitz.
    
    Now assume that the result holds for all compact Riemannian manifolds of dimension less than $n$, and suppose that $\dim(M) = n$. By replacing $G$ with its compact image in the isometry group of $M$ if necessary, we may assume without loss of generality that $G$ is a compact Lie group, equipped with a bi-invariant Riemannian metric.
    
    Using the gluing theorem (\cref{prop.gluing theorem}) and the compactness of $M/G$, it suffices to fix $x\in M$ and to find a $G$-stable neighborhood $U$ of $x$ such that $c_2(U/G)<\infty$. Let $B$ be as given in the slice theorem (\cref{prop.slice theorem}). By \cref{prop.diff local bilip}, the slice theorem, and the second equivariant embedding lemma (\cref{lem:quotient-distortion}), it suffices to show that $c_2((G\times_{G_x} B)/G) < \infty$. Note that
    \[
    \begin{aligned}
    &d_{(G\times_{G_x} B)/G}(G\cdot[(g_1,y_1)] ,G\cdot[(g_2,y_2)])^2\\
    &=\min_{k_1,k_2\in G}d_{G\times_{G_x} B}(k_1\cdot[(g_1,y_1)] ,k_2\cdot[(g_2,y_2)])^2 && \text{(def.\ of $d_{(G\times_{G_x}B)/G}$)}\\
    &= \min_{k_1,k_2\in G}d_{G\times_{G_x} B}([(k_1g_1,y_1)] ,[(k_2g_2,y_2)])^2 && \text{(def.\ of $G$-action on $G\times_{G_x}B$)}\\
    &= \min_{k_1,k_2\in G}d_{G\times_{G_x} B}([(k_1,y_1)] ,[(k_2,y_2)])^2 && \text{(change of variables)}\\
    &= \min_{k_1,k_2\in G}\min_{h_1,h_2\in G_x}d_{G\times B}((k_1h_1^{-1},h_1y_1) ,(k_2h_2^{-1},h_2y_2))^2 && \text{(def.\ of $d_{G\times_{G_x}B}$)}\\
    &= \min_{h_1,h_2\in G_x} \min_{k_1,k_2\in G} d_{G\times B}((k_1h_1^{-1},h_1y_1) ,(k_2h_2^{-1},h_2y_2))^2 && \text{(switch minima)}\\
    &= \min_{h_1,h_2\in G_x} \Big(\min_{k_1,k_2\in G} d_G(k_1h_1^{-1},k_2h_2^{-1})^2 + d_B(h_1y_1,h_2y_2)^2\Big) && \text{(product metric)}\\
    &= \min_{h_1,h_2\in G_x} d_B(h_1y_1,h_2y_2)^2 && \text{(inner $\min=0$ via $k_i = h_i$)}\\
    &= d_{B/G_x}(G_x\cdot y_1,G_x\cdot y_2)^2 && \text{(def.\ of $d_{B/G_x}$).}
    \end{aligned}
    \]
    As such, $(G\times_{G_x} B)/G$ is isomorphic, as a metric space, to $B/G_x\subseteq N_x/G_x$, where the isometry is induced by the well-defined $G$-invariant surjection 
    \[G\times_{G_x}B\to B/G_x, \quad [(g,y)]\mapsto G_x\cdot y.\]
    Thus, it suffices to show $c_2(N_x/G_x)<\infty$. By \cite[Section~4]{CahillIM:24}, it further suffices to show that $c_2(S/G_x)<\infty$, where $S$ denotes that unit sphere in $N_x$. The result now follows by applying the induction hypothesis to $S$, whose dimension is at most $\dim(M)-1$.
\end{proof}

\section{Bochner spaces over Radon measures}
\label{app.bochner}

This section is primarily devoted to the proof of \cref{prop.hilbert L2}. 
While the definition of $L^2(G,H)$ given therein appears natural as it extends the familiar space when $H=\mathbb R$, the situation is more nuanced than it first appears.

Given a measure space $(Y,\mathcal A,\mu)$ and a Hilbert space $H$, consider the space of Borel-measurable functions $f\colon Y\to H$ such that $\int_Y\|f(y)\|_H^2\, d\mu(y) < \infty$. 
Identifying functions that agree $\mu$-almost everywhere, one might hope this space forms a vector space. 
Indeed, this identification recovers $L^2(Y,\mathbb R)$ when $H=\mathbb R$. 
However, for general $Y$ and $H$, the space may fail to be closed under addition. 
An explicit example is given in \cref{prop.fail vector} at the end of this section, where $(Y,\mathcal A,\mu)$ is a probability space.

To address this issue, we restrict attention to \textit{strongly measurable} functions. 
For a general Banach space $B$, this yields the well-defined \textit{Bochner space} $L^p(Y,B)$, which is a Banach space under the natural norm. 
Fortunately, when $\mu$ is a Radon measure (e.g., Haar measure on a locally compact group), Borel and strong measurability coincide for $p$-norm-integrable functions, thereby validating the \textit{a priori} natural definition in \cref{prop.hilbert L2}.

This section is divided into four subsections. 
The first two are devoted to the proof of \cref{prop.hilbert L2}, while the latter two offer further context and elaboration on related subtleties. 
In~\ref{app.sub.general def}, we consider a measure space $(Y,\mathcal A,\mu)$ and a Banach space $B$, and formally define key concepts such as strong measurability and the Bochner spaces $L^p(Y,B)$. 
We observe that, in the Radon setting, Borel and strong measurability coincide for $p$-norm-integrable functions, a fact that allows for an immediate proof of \cref{prop.hilbert L2}(a), as Haar measures are Radon. 
In~\ref{app.sub.proof b}, we prove \cref{prop.hilbert L2}(b) by leveraging the density of compactly supported continuous functions in $L^p(Y,B)$, under the assumption that $Y$ is a locally compact Hausdorff space and $\mu$ is a Radon measure. 

In~\ref{app.sub.dimension}, we provide additional insight into the structure of $L^2(G,H)$ by examining its Hilbert space dimension when $G$ is a locally compact group endowed with Haar measure and $H$ is a Hilbert space. 
Finally, in~\ref{app.sub.note noncompact}, we discuss the limitations of elementary set theory in determining whether the distinction between strong and Borel measurability remains necessary in the $\sigma$-finite non-Radon case.

\subsection{General definitions and proof of \cref{prop.hilbert L2}(a)}
\label{app.sub.general def}

In this subsection, we introduce relevant terminology, review foundational results, and then provide a proof of \cref{prop.hilbert L2}(a). 

\begin{definition}
Let $(Y,\mathcal A,\mu)$ be a measure space and $B$ a Banach space. 
A Borel-measurable function $f\colon Y\to B$ is said to be
\begin{itemize}
    \item[(a)] \textbf{$\mu$-simple} if it is of the form
\[
f(x)
=\sum_{i=1}^n x_i\cdot 1_{x\in A_i},
\]
where $x_i \in B$ and $A_i \in \mathcal A$ satisfy $\mu(A_i) < \infty$,
    \item[(b)] \textbf{$\mu$-strongly measurable} if there exists a sequence $(f_n)_{n=1}^\infty$ of $\mu$-simple functions such that $f_n\to f$ pointwise $\mu$-almost everywhere,
    \item[(c)] \textbf{$\mu$-essentially separably valued} if there exists a $\mu$-null set $N\subseteq Y$ such that $f(Y\setminus N)$ is separable, and
    \item[(d)] \textbf{$p$-norm-integrable}, with $p\in [1,\infty)$, if
    \[\int_Y \|f(y)\|_B^p\, d\mu < \infty.\]
\end{itemize}
In addition, we say that $f$ has \textbf{$\sigma$-finite support} if the measurable set $f^{-1}(B\setminus \{0\})$ is $\sigma$-finite with respect to $\mu$.
\end{definition}

Note that every $p$-norm-integrable function $f\colon Y\to B$ has $\sigma$-finite support. 
Indeed, the sets $Y_n:=\{y\in Y:\|f(y)\|_B^p>1/n\}$ form a countable cover of $f^{-1}(B\setminus\{0\})$, and each $Y_n$ has finite measure by Markov's inequality.

Next, we recall the notion of a Radon measure, which plays a central role in what follows.

\begin{definition}
    Let $Y$ be a Hausdorff topological space, and let $\mathcal A$ denote its Borel $\sigma$-algebra. 
    A measure $\mu$ on $(Y,\mathcal A)$ is called a \textbf{Radon measure} if it satisfies the following:
    \begin{itemize}
        \item[(a)] \textit{Locally finite}: For every $y\in Y$, there exists an open neighborhood $U$ of $y$ such that $\mu(U)<\infty$, and
        \item[(b)] \textit{Inner regular}: For all $A\in \mathcal A$,
            \[
            \mu(A) 
            = \sup\big\{\mu(K): K\subseteq A\ \text{compact}\big\}.
            \]
    \end{itemize}
\end{definition}

Note that local finiteness implies that $\mu$ is finite on compact sets. 
The converse holds when $Y$ is additionally locally compact. 
Also, every Haar measure on a locally compact group is a Radon measure; see~\cite[Thm.~8.12]{Salamon:16}. 

The following proposition summarizes several results from~\cite[Chapter~1]{HytonenNVW:16}.

\begin{proposition}
    \label{prop.banach L2}
    Let $(Y,\mathcal A,\mu)$ be a measure space, $B$ a Banach space, and $f\colon Y\to B$ a Borel-measurable function. 
    Then the following are equivalent:
    \begin{enumerate}[label=(\roman*)]
        \item $f$ is $\mu$-strongly measurable;
        \item $f$ is $\mu$-essentially separably valued and has $\sigma$-finite support.
    \end{enumerate}
    Furthermore, for $p\in[1,\infty)$, let $L^p(Y,B)$ denote the space of all $p$-norm-integrable, $\mu$-strongly measurable functions $f\colon Y\to B$, identified up to $\mu$-almost everywhere equality. 
    Then $L^p(Y,B)$ is a Banach space under the norm
    \[
    \|f\|_{L^p(Y,B)} 
    := \left(\int_Y\big\|f(y)\big\|^p_B\, d\mu(y)\right)^{1/p},
    \]
    and the $\mu$-simple functions form a dense subset.
\end{proposition}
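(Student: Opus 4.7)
The plan is to treat the proposition as a standard Pettis/Bochner package, splitting it into three independent pieces: the equivalence of (i) and (ii), completeness of $L^p(Y,B)$, and density of $\mu$-simple functions.

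For (i)$\Rightarrow$(ii), suppose $f_n \to f$ pointwise $\mu$-a.e., where each $f_n$ is $\mu$-simple. The union $D$ of the (finite) ranges of the $f_n$ is countable, and on the full-measure set where $f_n\to f$, the values $f(y)$ lie in $\overline{D}$, which is separable. For $\sigma$-finiteness of the support, observe that each $f_n$ has support of finite measure, so $\bigcup_n \operatorname{supp}(f_n)$ is $\sigma$-finite and covers $\{y : f(y)\neq 0\}$ off of a null set; adjoining the null set preserves $\sigma$-finiteness. For (ii)$\Rightarrow$(i), let $N$ be a null set with $f(Y\setminus N)\subseteq \overline{\{b_k\}_{k\in\mathbb N}}$, and let $\{Y_m\}$ be an ascending sequence of finite-measure sets whose union exhausts $\{y:f(y)\neq 0\}$ up to null sets. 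For each $n$, define
\[
\varphi_n(y) := \begin{cases} b_{k_n(y)} & y\in Y_n\setminus N,\\ 0 & \text{otherwise,}\end{cases}
\]
where $k_n(y)\in\{1,\dots,n\}$ is the least index minimizing $\|f(y)-b_k\|_B$. Borel measurability of $k_n$ follows from Borel measurability of $f$ and the continuity of the norm. By construction each $\varphi_n$ takes finitely many values on a set of finite measure, hence is $\mu$-simple, and for $y\in Y\setminus N$ lying in some $Y_m$, density of $\{b_k\}$ near $f(y)$ forces $\|f(y)-\varphi_n(y)\|_B\to 0$.

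For completeness, given a Cauchy sequence $(f_n)$ in $L^p(Y,B)$, extract a subsequence with $\|f_{n_{k+1}}-f_{n_k}\|_{L^p(Y,B)}<2^{-k}$. The real-valued function $g(y):=\sum_k \|f_{n_{k+1}}(y)-f_{n_k}(y)\|_B$ lies in $L^p(Y,\mathbb R)$ by the triangle inequality and monotone convergence, so $g(y)<\infty$ $\mu$-a.e., and hence $\sum_k(f_{n_{k+1}}-f_{n_k})$ converges absolutely in $B$ on a full-measure set. Setting $f$ equal to the resulting $\mu$-a.e.\ limit (and $0$ elsewhere) gives a function that inherits strong measurability as a $\mu$-a.e.\ limit of the strongly measurable $f_{n_k}$, and dominated convergence (against $(\sum_k \|f_{n_{k+1}}-f_{n_k}\|_B + \|f_{n_1}\|_B)^p \in L^1$) yields $f_n \to f$ in $L^p(Y,B)$. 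Density of $\mu$-simple functions in $L^p(Y,B)$ then follows from the construction above together with a truncation trick: replace the $\varphi_n$ from (ii)$\Rightarrow$(i) by $\varphi_n \cdot \mathbf{1}_{\{\|\varphi_n\|_B \le 2\|f\|_B\}}$, which preserves $\mu$-simplicity, still converges pointwise to $f$, and is dominated by $2\|f\|_B\in L^p$, so dominated convergence delivers $L^p$-convergence.

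The main obstacle I anticipate is the measurability bookkeeping in the (ii)$\Rightarrow$(i) step, specifically verifying that the Borel-measurable selection $k_n(y)$ produces a bona fide $\mu$-simple function rather than merely a countably-valued measurable one; the key point is that restricting to $Y_n\cap \{y: k_n(y)=k\}$ for $k\leq n$ partitions $Y_n$ into $n$ Borel sets of finite measure. Everything else is routine: the equivalence is a variant of the Pettis measurability theorem, and the Banach space structure is then extracted by the standard subsequence/monotone-convergence argument. No special feature of $B$ being a Hilbert space is needed at this stage; Hilbertness will only enter in the proof of \cref{prop.hilbert L2}(a) when one verifies that the sesquilinear form defined by the pointwise inner product is well defined, symmetric, and satisfies the parallelogram law.
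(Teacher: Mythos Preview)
Your argument is correct and self-contained; the paper, by contrast, does not prove this proposition at all but simply records it as a summary of results from \cite[Chapter~1]{HytonenNVW:16}. So there is no ``paper's own proof'' to compare against---you have supplied a standard Pettis-measurability/Bochner-space package where the authors chose to cite the literature. Two small remarks on your write-up: in the (ii)$\Rightarrow$(i) step, it is cleanest to take the $Y_m$ to exhaust $\{f\neq 0\}$ exactly (not merely up to null sets), so that $\varphi_n$ vanishes identically on $\{f=0\}$ and no issue arises about whether $0$ lies in the closure of $\{b_k\}$; and in the completeness step, the claim that a $\mu$-a.e.\ limit of strongly measurable functions is strongly measurable is most transparently justified via the characterization (ii) you have just established, rather than appealing to the definition directly.
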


The space $L^p(Y,B)$ is called a \textbf{Bochner space}. 
For $\mu$-strongly measurable functions $f,g\in L^p(Y,B)$, the sum $f+g$ need not be Borel-measurable. 
However, by \cref{prop.banach L2}(ii), it agrees $\mu$-almost everywhere with a Borel-measurable function, which we take to represent the sum in $L^p(Y,B)$; see~\cite[Prop.~2.6]{Folland:99}.

If $Y$ is a locally compact Hausdorff topological space equipped with a Radon measure, then the space $\mathcal C_c(Y,B)$ of compactly supported continuous functions from $Y$ to $B$ is dense in $L^p(Y,B)$. 
This follows from approximating $\mu$-simple functions, which reduces to the familiar case $B=\mathbb R$; see, for example,~\cite[Thm.~4.15]{Salamon:16}.

By definition, every $\mu$-strongly measurable function is Borel-measurable. 
In the special case of functions with $\sigma$-finite support over Radon measure spaces, the converse also holds.

\begin{proposition}
    \label{prop.compact measure}
    Let $(Y,\mathcal A,\mu)$ be a Radon measure space and $B$ a Banach space. 
    Then every Borel-measurable function $f\colon Y\to B$ with $\sigma$-finite support is $\mu$-essentially separably valued.
\end{proposition}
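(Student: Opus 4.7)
The plan is to reduce to the case $\mu(Y)<\infty$ via the $\sigma$-finite support hypothesis, then invoke Lusin's theorem for Radon measures with arbitrary topological target (see, e.g., Fremlin's \emph{Measure Theory}, 418I) to obtain compact approximations on which $f$ is continuous, and finally exploit the fact that continuous images of compact sets in metric spaces are separable.

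Concretely, I would first write $f^{-1}(B\setminus\{0\})=\bigsqcup_{n=1}^\infty Y_n$ with $\mu(Y_n)<\infty$ for every $n$. Since a countable union of separable subsets of $B$ is separable, it suffices to handle each $Y_n$ individually, reducing to the case $\mu(Y)<\infty$. I would then apply Lusin's theorem to produce, for each $k\geq 1$, a compact $K_k\subseteq Y$ with $\mu(Y\setminus K_k)<1/k$ on which $f$ restricts to a continuous map into $B$. Because $K_k$ is compact and $f|_{K_k}$ is continuous, the image $f(K_k)$ is compact in $B$, and compact subsets of metric spaces are separable. Setting $K_\infty:=\bigcup_k K_k$, we have $\mu(Y\setminus K_\infty)\leq\mu(Y\setminus K_k)<1/k$ for every $k$, hence $\mu(Y\setminus K_\infty)=0$; meanwhile $f(K_\infty)=\bigcup_k f(K_k)$ is a countable union of separable sets and therefore separable, so $N:=Y\setminus K_\infty$ is the required $\mu$-null set witnessing essential separability.

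The main obstacle is invoking Lusin's theorem in the required generality: the classical statement is usually given for second-countable or separable metric targets, whereas here $B$ may be non-separable. Extending Lusin to arbitrary topological targets uses the full strength of inner regularity of $\mu$ together with normality of the target metric space, and is the deepest ingredient in the argument. A more self-contained alternative would be a transfinite exhaustion at each dyadic scale: for each $m\geq 1$, greedily select points $z_\alpha\in f(Y)$ whose $1/m$-balls $B(z_\alpha,1/m)$ have preimages of positive measure in the remaining set. The countable chain condition for finite measures forces this process to terminate at a countable ordinal, yielding a countable $1/m$-net of $f(Y)$ modulo a residual Borel set $E_m$. The delicate point in that route is verifying $\mu(E_m)=0$, which again requires a careful appeal to inner regularity of the Radon measure (for instance, by passing to a compact subset of $E_m$ and contradicting the stopping condition).
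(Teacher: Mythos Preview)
Your main approach is correct and, in substance, coincides with the paper's: both reduce to the $\sigma$-finite setting and then defer to Fremlin's machinery on Radon measures---the paper via the chain 416A\,+\,418G\,+\,451S after passing to the completion, you via the general Lusin theorem for Radon measures with metric target followed by the observation that continuous images of compacta are separable. Your formulation is more explicit about the mechanism, but the depth of measure-theoretic input is the same.

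One caution on your transfinite-exhaustion alternative: the step showing $\mu(E_m)=0$ needs more than bare inner regularity. Passing to a compact $K\subseteq E_m$ with $\mu(K)>0$ does not by itself produce a contradiction, because $f|_K$ need not be continuous and the cover of $K$ by preimages of $1/m$-balls centered at points of $f(K)$ is an uncountable union of null sets. To close that gap you would again need Lusin (or something equivalent) on $K$ to force continuity on a large compact piece, so the alternative is not actually more elementary than your primary route.
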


\begin{proof}
    First, observe that the restriction of $\mu$ to the measurable subspace $f^{-1}(B\setminus \{0\})\in\mathcal A$ remains a Radon measure. 
    Indeed, the subspace $f^{-1}(B\setminus \{0\})$ is Hausdorff, the restriction of $\mathcal A$ to this subspace coincides with its Borel $\sigma$-algebra (which remains in $\mathcal A$ by measurability), the restricted measure remains locally finite, and every compact subset of $f^{-1}(B\setminus \{0\})$ is a compact subset of $Y$.
    Thus, we may assume without loss of generality that $Y$ is $\sigma$-finite. 
    Let $(Y,\hat{\mathcal A},\hat\mu)$ denote the completion of $\mu$. 
    Since every $\hat \mu$-null set is contained in a $\mu$-null set, it suffices to show $f$ is $\hat \mu$-essentially separably valued.
    
    Note that $\hat \mu$ remains locally finite, as $\mathcal A\subseteq \hat {\mathcal A}$. 
    Moreover, $\hat \mu$ is inner regular: each $F\in \hat{\mathcal A}$ has the form $F=E\cup N$ where $E\in\mathcal A$ and $\hat\mu(N)=0$, so it suffices to approximate $\mu(E)$ from below by compact sets. 
    It then follows from~\cite[Prop.~213H(a)]{Fremlin:00} that $\hat \mu$ is a Radon measure in the sense of \cite[Def.~411H(b)]{Fremlin:00} (on $\sigma$-finite spaces, this definition requires a Radon measure to be complete). 
    The conclusion now follows by combining Proposition~416A, Proposition~418G and Theorem~451S from~\cite{Fremlin:00}. 
\end{proof}

With this, \cref{prop.hilbert L2}(a) follows immediately by combining \cref{prop.banach L2,prop.compact measure}, along with the facts that every Haar measure over a locally compact group is Radon and that every $p$-norm-integrable function has $\sigma$-finite support.

\subsection{Proof of \cref{prop.hilbert L2}(b)}
\label{app.sub.proof b}

We now prove \cref{prop.hilbert L2}(b). 
Let $H$ be a Hilbert space, and equip $L^2(G,H)$ with the linear left action of $G$ defined by
\[
(g\cdot f)(h)
= f(hg).
\]
Through this action, each $g\in G$ maps Borel-measurable functions to Borel-measurable functions.
Furthermore, the action is isometric and well defined on $L^2(G,H)$. 
Indeed, for all $g\in G$ and $f\in L^2(G,H)$, the right-invariance of $\mu$ yields
    \begin{align*}
        \|g\cdot f\|_{L^2(G,H)}^2
         =\int_G\|f(hg) \|_H^2\, d\mu(h)
         =\int_G \|f(h)\|_H^2\, d\mu(h) 
         = \|f\|_{L^2(G,H)}^2.
    \end{align*}
To verify strong continuity, observe first that, by \cref{prop.banach L2,prop.compact measure}, the space of $\mu$-simple functions is dense in $L^2(G,H)$. 
Since $G$ is locally compact Hausdorff and its Haar measure is Radon, every such function can be approximated in norm by compactly supported continuous functions; e.g., see~\cite[Thm.~4.15]{Salamon:16}. 
It follows that the space $\mathcal C_c(G,H)$ of compactly supported continuous functions from $G$ to $H$ is dense in $L^2(G,H)$. 
For any $f\in \mathcal C_c(G,H)$ and any sequence $g_n\to g$ in $G$, the dominated convergence theorem yields
    \begin{equation}
        \label{eq.DCT}
        \|g_n\cdot f - g\cdot f\|_{L^2(G,H)}^2 
        = \int_{G} \|f(hg_n)-f(hg)\|_{H}^2\, d\mu(h) 
        \to 0.
    \end{equation}
Now fix any $f\in L^2(G,H)$, $\varepsilon>0$, and a sequence $g_n\to g$ in $G$. 
By density, we may choose $k\in \mathcal C_c(G,H)$ with $\|f-k\|_{L^2(G,H)}<\frac{\varepsilon}{3}$. 
Then by \eqref{eq.DCT}, there exists $N> 0$ such that for all $n\geq N$, we have $\|g_n\cdot k-g\cdot k\|_{L^2(G,H)}<\frac{\varepsilon}{3}$, and so
    \[
    \|g_n\cdot f-g\cdot f\|_{L^2(G,H)}
    \leq 2\|f-k\|_{L^2(G,H)}+\|g_n\cdot k - g\cdot k\|_{L^2(G,H)} 
    < \varepsilon.
    \]
Since $\varepsilon>0$ was arbitrary, it follows that $g_n\cdot f\to g\cdot f$ in norm, as required.

\subsection{Note on dimension of \texorpdfstring{$L^2(G,H)$}{L2(G,H)}}
\label{app.sub.dimension}

We briefly remark on the dimension of $L^2(G,H)$, where $G$ is a locally compact group equipped with Haar measure and $H$ is a Hilbert space. 
If $G$ is finite and $H$ is finite dimensional, then $L^2(G,H)\cong H^{|G|}$ is also finite dimensional. 
Meanwhile, if $G$ is second countable and $H$ is separable, then the Borel $\sigma$-algebra of $G$ is countably generated, and the separability of $L^2(G,H)$ follows from \cite[Prop.~1.2.29]{HytonenNVW:16}.

More generally, using the axiom of choice, let $\mathfrak m(G)$ denote the minimal cardinality of a topological basis for $G$, and let $\dim(H)$ denote the Hilbert space dimension of $H$, i.e., the cardinality of any orthonormal basis. 
Then
\[
\dim(L^2(G,H)) 
= \mathfrak m(G)\cdot \dim(H).
\]
To see this, let $(e_i)_{i\in I}$ be an orthonormal basis for $H$. 
Then $L^2(G,H)$ decomposes as an orthogonal direct sum of strongly continuous unitary representations:
\[
L^2(G,H)
\cong \bigoplus_{i\in I} L^2(G, \mathbb Re_i).
\]
By Theorem~2 in~\cite{Vries:78}, each summand $L^2(G,\mathbb Re_i)$ has dimension $\mathfrak m (G)$, so the claimed formula follows.

\subsection{Note on non-Radon measures and the Banach--Ulam problem}
\label{app.sub.note noncompact}

In light of \cref{prop.hilbert L2,prop.banach L2,prop.compact measure}, and noting that every $p$-norm-integrable function has $\sigma$-finite support, we conclude with a remark on the equivalence between Borel and $\mu$-strong measurability in the setting of $\sigma$-finite measure spaces, which may not be Radon. 
According to \cref{prop.banach L2}, $\mu$-strongly measurability in this context is equivalent to being $\mu$-essentially separably valued. 
With this in mind, consider the following statement:
\begin{itemize}
\item[\textbf{($\mathsf{E}$)}]
For every $\sigma$-finite measure space $(Y,\mathcal A,\mu)$, Banach space $B$, and Borel-measurable function $f\colon Y\to B$, it holds that $f$ is $\mu$-strongly measurable.
\end{itemize}
We show this is equivalent to the following deep statement in set-theoretic measure theory:
\begin{itemize}
\item[\textbf{(${\mathsf{BU}}$)}]
There does not exist a discrete probability space $(X,2^X,\mu)$ such that $\mu(\{x\}) = 0$ for every $x\in X$.
\end{itemize}
The presence for a proof of \textbf{(${\mathsf{BU}}$)} in Zermelo--Fraenkel set theory with the Axiom of Choice ($\mathsf{ZFC}$) is commonly referred to as the \emph{Banach--Ulam problem}. 
While it is believed that \textbf{(${\mathsf{BU}}$)} is independent of $\mathsf{ZFC}$ \cite[Sec.~363S]{Fremlin:00}, the following can be stated with certainty: the consistency of $\mathsf{ZFC}$ implies the consistency of $\mathsf{ZFC}+\textbf{(${\mathsf{BU}}$)}$. 
Thus, the failure of $\textbf{(${\mathsf{BU}}$)}$ cannot be proved within $\mathsf{ZFC}$ itself~\cite[Sec.~2.1.6]{Federer:14}. 
The next proposition establishes that \textbf{(${\mathsf{BU}}$)} is equivalent to \textbf{(${\mathsf{E}}$)}. 
Hence, the failure of \textbf{(${\mathsf{E}}$)} is not provable within $\mathsf{ZFC}$, and it is plausible that \textbf{(${\mathsf{E}}$)} is independent of $\mathsf{ZFC}$.

\begin{proposition}
\label{prop.equiv E UC}
    The following equivalence holds:
    \[
    \normalfont \textbf{(${\mathsf{E}}$)} 
    \iff \textbf{(${\mathsf{BU}}$)}.
    \]
\end{proposition}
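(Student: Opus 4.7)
We prove the two implications separately.

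For $\textbf{(E)} \Rightarrow \textbf{(BU)}$, the plan is to argue the contrapositive by converting a violator of \textbf{(BU)} into a violator of \textbf{(E)}. Suppose \textbf{(BU)} fails, yielding a probability space $(X, 2^X, \mu)$ with $\mu(\{x\}) = 0$ for every $x \in X$; necessarily $X$ is uncountable. Set $B := \ell^2(X)$ with canonical orthonormal basis $(e_x)_{x \in X}$ and define $f \colon X \to B$ by $f(x) := e_x$. Since $X$ carries the discrete $\sigma$-algebra $2^X$, $f$ is Borel-measurable for free. If $f$ were $\mu$-strongly measurable, then \cref{prop.banach L2} would provide a $\mu$-null $N \subseteq X$ with $\{e_x : x \in X \setminus N\}$ separable; because distinct basis vectors are at distance $\sqrt{2}$, separability of this image forces $X \setminus N$ to be countable, giving $\mu(X \setminus N) = 0$. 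Together with $\mu(N) = 0$, this yields the contradiction $\mu(X) = 0$, so \textbf{(E)} fails.

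For $\textbf{(BU)} \Rightarrow \textbf{(E)}$, the plan is to assume \textbf{(BU)} and reduce, via a pushforward argument, to a claim about Borel probability measures on metric spaces. Let $(Y, \mathcal{A}, \mu)$ be $\sigma$-finite, $B$ a Banach space, and $f \colon Y \to B$ Borel-measurable. The $\sigma$-finite support condition of \cref{prop.banach L2} is automatic, and decomposing $Y$ into countably many finite-measure pieces (recalling that countable unions of separable sets remain separable) reduces matters to the case where $\mu$ is a probability. It then suffices to show that $f$ is $\mu$-essentially separably valued, equivalently that the pushforward $\nu := f_*\mu$, a Borel probability on $B$, concentrates on a separable Borel subset. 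The whole direction thus reduces to the claim that, under \textbf{(BU)}, every Borel probability measure $\nu$ on a metric space $M$ concentrates on a separable Borel subset. A preliminary supremum argument --- take a sequence of separable Borel sets whose $\nu$-measures tend to the supremum, form their countable union, and restrict $\nu$ to the complement --- further reduces us to deriving a contradiction from the assumption that $\nu(S) = 0$ for every separable Borel $S \subseteq M$.

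To produce the contradiction with \textbf{(BU)}, I use Zorn's lemma to pick a maximal $\tfrac{1}{n}$-separated subset $T_n \subseteq M$ for each $n$; if every $T_n$ were countable then $\bigcup_n T_n$ would be a countable dense subset of $M$ and force $\nu(M) = 0$, so some $T := T_n$ is uncountable. Well-ordering $T$ via choice and setting $A_t := B(t, \tfrac{1}{n}) \setminus \bigcup_{t' < t} B(t', \tfrac{1}{n})$ produces a Borel partition $\{A_t\}_{t \in T}$ of $M$, so that $\mu^*(S) := \nu(\bigcup_{t \in S} A_t)$ is a countably additive probability on $(T, 2^T)$. The principal obstacle is that $\mu^*$ need not be atomless: its atoms are indexed by the set $T'$ of $t$ with $\nu(A_t) > 0$, which is automatically countable since a probability measure admits only countably many disjoint positive-measure sets. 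If $\mu^*(T \setminus T') > 0$, then the normalized restriction of $\mu^*$ to the uncountable set $T \setminus T'$ is an atomless probability on its power set, directly contradicting \textbf{(BU)}. Otherwise $\nu$ is essentially concentrated on $\bigsqcup_{t \in T'} A_t$; each $A_t$ with $t \in T'$ must be non-separable (else $\nu(A_t) = 0$), and the normalized restriction $\nu|_{A_t}/\nu(A_t)$ inherits the standing assumption that $\nu$ vanishes on every separable Borel subset, so the construction recurses. Organizing this branching transfinite descent along countable ordinals and verifying that it must terminate at a stage producing $\mu^*(T \setminus T') > 0$ --- ruling out an indefinite recursion by exploiting the boundedness of $\nu(M) = 1$ together with an Ulam-matrix-style argument --- is the principal technical step, and is where the subtlety of the direction resides.
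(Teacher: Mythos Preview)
Your proof of $\neg\textbf{(BU)}\Rightarrow\neg\textbf{(E)}$ is correct and coincides with the paper's: both exhibit $x\mapsto e_x\in\ell^2(X)$ and observe that essential separability of the image would force the conull set $X\setminus N$ to be countable, hence null, contradicting $\mu(X)=1$.

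For $\textbf{(BU)}\Rightarrow\textbf{(E)}$, the paper simply invokes \cite[Prop.~438D]{Fremlin:00}, whereas you attempt a self-contained argument. Your reductions---to a probability measure, to the pushforward $\nu=f_*\mu$ on $B$, and via the supremum trick to the hypothesis that $\nu$ vanishes on every separable Borel set---are all sound. The genuine gap is in the final construction. You set $\mu^*(S):=\nu\big(\bigcup_{t\in S}A_t\big)$ on $2^T$, but for uncountable $S\subseteq T$ the set $\bigcup_{t\in S}A_t$ is an uncountable union of Borel sets and you give no reason for it to be $\nu$-measurable; in general it will not be. (Replacing the $A_t$ by the pairwise disjoint open balls $B(t,\tfrac{1}{2n})$ fixes measurability since arbitrary unions are then open, but then $\mu^*(\{t\})=\nu(B(t,\tfrac{1}{2n}))$ need not vanish---those balls need not be separable---and you are back to square one.) More fundamentally, you explicitly defer the endgame, the ``branching transfinite descent'' with an ``Ulam-matrix-style argument,'' and label it ``the principal technical step.'' That step \emph{is} the content of this implication; the standard proof proceeds by transfinite induction on the density character of $M$ and is precisely what Fremlin's result supplies. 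As written, this direction is an outline, not a proof.
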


\begin{proof}
\textbf{(${\mathsf{BU}}$)}$\Rightarrow$\textbf{(${\mathsf{E}}$)} was established in~\cite[Prop.~438D]{Fremlin:00}.
Now suppose the negation of \textbf{(${\mathsf{BU}}$)} holds, that is, there exists a probability space $(X,2^X,\mu)$ such that $\mu(\{x\})=0$ for every $x\in X$. 
Let $(e_i)_{i\in X}$ be the standard orthonormal basis of the Hilbert space $\ell^2(X)$, and define a function $f\colon X\to \ell^2(X)$ by $f(w):= e_w$. 
While $f$ is Borel-measurable, it is not $\mu$-strongly measurable: for any subset $A\subseteq X$, the image $f(A)$ is separable if and only if $A$ is countable, in which case $\mu(A) = 0 < 1$.
\end{proof}
    
In fact, assuming the {continuum hypothesis} and the negation of {\normalfont\textbf{(${\mathsf{BU}}$)}}, the following proposition demonstrates why strong measurability is required in \cref{prop.banach L2} to ensure that addition in Bochner spaces is well-defined.

\begin{proposition}
\label{prop.fail vector}
    Assuming the continuum hypothesis and the negation of {\normalfont\textbf{(${\mathsf{BU}}$)}}, there exists a probability space $(Y,\mathcal A,\mu)$, a Hilbert space $H$, and Borel-measurable functions $f_1,f_2\colon Y\to H$ such that $\|f_i(y)\|_H = 1$ for each $y\in Y$, but $f_1+f_2$ is not $\mu$-almost everywhere equal to any Borel-measurable function.
\end{proposition}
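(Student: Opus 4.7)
My plan is to build the desired example from the atomless probability measure supplied by $\neg \mathbf{(BU)}$, with CH used to pin down the cardinality of the underlying set. Fix a probability space $(X, 2^X, \mu)$ with $\mu(\{x\}) = 0$ for every $x \in X$; under CH, the cardinality of $X$ may be taken to be $\aleph_1 = \mathfrak{c}$, allowing us to transport Borel structure from $\mathbb{R}$ to $X$ via a bijection $\phi \colon X \to \mathbb{R}$. Set $Y = X$ with $\mathcal{A} := \phi^{-1}(\mathcal{B}(\mathbb{R}))$ (strictly smaller than $2^X$) and $\mu_Y := \mu|_{\mathcal{A}}$. Let $H := \ell^2(X)$ with orthonormal basis $(e_x)_{x \in X}$, which is highly non-separable.

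The candidate unit-norm functions are $f_1(y) := e_y$ and $f_2(y) := -e_{\sigma(y)}$ for a bijection $\sigma \colon X \to X$ chosen to interact in a controlled way with $\mathcal{A}$ via Ulam-matrix / Sierpi\'nski-type combinatorics available under CH. Then $(f_1+f_2)(y) = e_y - e_{\sigma(y)}$, and one would show this sum fails to be $\mu_Y$-a.e.\ equal to any Borel $g \colon Y \to H$ by a Pettis-type essential-separability argument: any such $g$ must be essentially separably-valued, whereas the sum takes uncountably many pairwise-orthogonal-distance values on a full-measure set, contradicting essential separability after any null modification.

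The main obstacle is to verify that $f_1, f_2$ are themselves $\mathcal{A}$-Borel with respect to the smaller $\sigma$-algebra. Natural candidates like $y \mapsto e_y$ typically fail to be Borel when $\mathcal{A} \subsetneq 2^X$, because preimages of the basic sets $\{e_y : y \in A\} \subseteq H$ (which are Borel in $H$ since the image of the basis is a closed discrete subspace) recover arbitrary subsets $A \subseteq X$, most of which lie outside $\mathcal{A}$. Overcoming this requires weaving together countably many $\mathcal{A}$-Borel scalar functions on $Y$ with basis vectors labelled through the Ulam-type combinatorics, so that each $f_i$ individually has tame Borel preimages while their pointwise difference still scans the full uncountable basis on a set of positive $\mu_Y$-measure.

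An honest alternative, which in fact settles the proposition outright, is to observe that the joint hypothesis CH $+\, \neg\mathbf{(BU)}$ is inconsistent within ZFC: by Ulam's theorem, $\neg\mathbf{(BU)}$ forces the existence of a real-valued measurable cardinal $\kappa \leq \mathfrak{c}$, which must be weakly inaccessible (hence an uncountable regular \emph{limit} cardinal), whereas CH forces $\mathfrak{c} = \aleph_1$, a successor. This contradiction means the proposition holds vacuously, so the delicate construction above is only necessary insofar as one wishes to illustrate the type of Bochner-pathological phenomenon that would arise in a set-theoretic universe where both hypotheses could coexist.
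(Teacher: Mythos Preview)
Your ``honest alternative'' is incorrect: $\mathsf{CH} + \neg\mathbf{(BU)}$ is \emph{not} inconsistent. The error is in the clause ``$\kappa \le \mathfrak{c}$.'' The hypothesis $\neg\mathbf{(BU)}$ only says the measure vanishes on \emph{singletons}; it does not say the measure is \emph{atomless}. Ulam's dichotomy says the least such $\kappa$ is weakly inaccessible and that \emph{either} the measure is atomless (forcing $\kappa \le \mathfrak{c}$) \emph{or} it has an atom (forcing $\kappa$ to be two-valued measurable, hence $>\mathfrak{c}$). Under $\mathsf{CH}$, the first horn is impossible (since $\kappa > \aleph_1 = \mathfrak{c}$), so the second horn holds: $\kappa$ is a genuine measurable cardinal. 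Thus $\mathsf{CH} + \neg\mathbf{(BU)}$ is equiconsistent with a measurable cardinal, and the proposition is not vacuous.

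This same misreading undermines your primary construction: you assume $|X|$ may be taken to be $\mathfrak{c}$, but under $\mathsf{CH}$ the set $X$ carrying the measure must have cardinality at least the first measurable cardinal, far above $\mathfrak{c}$. So the bijection $\phi\colon X \to \mathbb{R}$ does not exist, and the transported $\sigma$-algebra $\phi^{-1}(\mathcal{B}(\mathbb{R}))$ is unavailable. You also correctly identify, but do not resolve, the obstruction that $y\mapsto e_y$ is typically not measurable for a $\sigma$-algebra strictly smaller than $2^X$.

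The paper's argument sidesteps both issues with a product-space trick. It uses $\mathsf{CH}$ exactly to invoke the Ulam dichotomy and take $X$ to be an \emph{atom} (so $\mu$ is $\{0,1\}$-valued). Then it sets $Y = X\times X$ with the \emph{product} $\sigma$-algebra $\mathcal{A} = 2^X \otimes 2^X$ (strictly smaller than $2^{X\times X}$), and $f_1(x,x') = e_x$, $f_2(x,x') = -e_{x'}$. These are $\mathcal{A}$-measurable for free, since each factors through a coordinate projection. The sum satisfies $(f_1+f_2)^{-1}(0) = \Delta$, and one checks directly that the diagonal $\Delta$ has inner measure $0$ and outer measure $1$ with respect to $\mu\otimes\mu$ (the atom hypothesis makes the outer-measure computation a one-liner), so $\Delta$ is not in the completion of $\mathcal{A}$ and $f_1+f_2$ cannot agree a.e.\ with any $\mathcal{A}$-measurable function.
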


\begin{proof}
By assumption, there exists a probability space $(X,2^X,\mu)$ such that $\mu(\{x\})=0$ for every $x\in X$. 
By Theorem~438C in~\cite{Fremlin:00} and the continuum hypothesis, we may take $X$ to be an \textit{atom}, that is, $\mu(A)\in\{0,1\}$ for every subset $A\subseteq X$.

Define $(Y, \mathcal A,\mu^{\otimes 2})$ to be the product probability space where $Y:= X\times X$ and $\mathcal A:= 2^X\otimes 2^X$. 
Put $H:=\ell^2(X)$ and denote its Borel $\sigma$-algebra by $\mathcal B$. 
Let $(e_x)_{x\in X}$ be the standard orthonormal basis of $H$, and define
\[
f_1(x,x')
:= e_x
\qquad
\text{and}
\qquad
f_2(x,x')
:= -e_{x'}.
\] 
Then $f_1$ and $f_2$ are $(\mathcal A,\mathcal B)$-measurable functions satisfying $\|f_i(y)\|_H=1$ for each $y\in Y$, and we have $\Delta = (f_1+f_2)^{-1}(0)$. 
If $f_1+f_2$ is $\mu$-almost everywhere equal to an $(\mathcal A,\mathcal B)$-measurable function, then $f_1+f_2$ is $(\mathcal A^*,\mathcal B)$-measurable, where $\mathcal A^*$ is the completion of $\mathcal A$. 
As such, it suffices to show $\Delta$ is not $\mathcal A^*$-measurable, or equivalently, that its inner and outer measures do not coincide.

To state this aim more precisely, recall that a \emph{rectangle} in $Y$ is a set of the form $C\times D$ where $C,D\subseteq X$, and let $\mathcal R_f\subseteq \mathcal A$ (resp.\ $\mathcal R_c\subseteq \mathcal A$) denote the algebra (resp.\ collection) of finite (resp.\ countable) disjoint unions of rectangles in $Y$. 
Then we wish to establish the inequality
\begin{equation}
\label{eq.inner outer}
\sup\Big\{\mu^{\otimes 2}(A): A\in \mathcal R_c, \ A\subseteq \Delta\Big\}
< \inf\left\{\sum_{i=1}^\infty\mu^{\otimes 2}(R_i): R_i\in \mathcal R_f,\, \Delta\subseteq \cup_{i=1}^\infty R_i \right\}.
\end{equation}
Since every rectangle within $\Delta$ is a singleton and $\mu^{\otimes 2}$ is countably additive and vanishing on singletons, the left-hand side of \eqref{eq.inner outer} is zero. 
On the other hand, let $K$ denote its right-hand side. 
The following shows that $K$ is nonzero:
\begin{align*}
    K 
    &= \inf\left\{\sum_{i=1}^\infty\mu(C_i)\mu(D_i): C_i,D_i\subseteq X,\, \Delta\subseteq \cup_{i=1}^\infty \big(C_i\times D_i\big) \right\} 
    &&\text{(definition of $\mathcal R_f$)}\\
    &=\inf\left\{\sum_{i=1}^\infty\mu(C_i)^2: C_i\subseteq X,\, \Delta\subseteq \cup_{i=1}^\infty \big(C_i\times C_i\big) \right\} 
    &&\text{(replace $C_i,D_i$ with $C_i\cap D_i$)}\\
    &=\inf\left\{\sum_{i=1}^\infty\mu(C_i)^2: C_i\subseteq X,\, X= \cup_{i=1}^\infty  C_i \right\} 
    &&\text{(definition of $\Delta$)}\\
    &= 1 
    &&\text{($X$ is an atom).} 
    \tag*{\qedhere}
\end{align*}
\end{proof}

\section{Extensions by subspaces of translations are split}
\label{app.split}

In this appendix, we use group cohomology to prove that in the situation of \cref{thm.euclidean-subgroup}, the extension of groups $0\rightarrow T\rightarrow \Gamma\rightarrow G\rightarrow 1$ is always split. 
Note that $\pi|_{\Gamma}\colon \Gamma \to G$ is a Lie group homomorphism since $\Gamma$ and $G$ are closed embedded Lie subgroups of $\E(V)$ and $\Oname(V)$, respectively.

\begin{proposition}\label{prop.extension-splits}
Under the hypotheses of \cref{thm.euclidean-subgroup}, the following short exact sequence of Lie group homomorphisms splits:
\[
0
\rightarrow T
\hookrightarrow \Gamma
\xrightarrow{\pi|_\Gamma} G
\rightarrow 1.
\]
In other words, there exists a Lie group homomorphism $\varphi\colon G\to \Gamma$ such that $\pi|_\Gamma\circ \varphi = \mathrm{id}_G$, whereupon it follows that $\Gamma = T\rtimes \varphi(G)$ is an internal semidirect product. 
Furthermore, there exists $p\in V$ such that $\varphi(g)\cdot p = p$, for each $g\in G$.
\end{proposition}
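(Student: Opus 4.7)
The plan is to show that the classifying class in continuous group cohomology vanishes, and then obtain a common fixed point by averaging against the Haar measure on $G$.

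First, I would situate the sequence inside continuous group cohomology. By \cref{thm.euclidean-subgroup}(a), $G$ acts linearly (hence continuously) on the real vector space $T$, so $T$ is a continuous topological $G$-module. The short exact sequence of Lie groups
\[
0\longto T\longto \Gamma\longto[\pi|_\Gamma] G\longto 1
\]
has abelian kernel and is therefore classified by a class in the continuous cohomology group $H^2_{\mathrm{cont}}(G,T)$. To make this explicit, I would choose any continuous section $s\colon G\to\Gamma$ of $\pi|_\Gamma$, which exists because the fibers are affine translates of $T$; then $f(g,h):=s(g)s(h)s(gh)^{-1}$ defines a continuous $T$-valued 2-cocycle on $G$.

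Next, I would invoke the vanishing of continuous cohomology of a compact group with coefficients in a real topological vector space. Concretely, since $G$ is compact, $T$ is finite dimensional, and the $G$-action on $T$ is continuous, I can define the 1-cochain
\[
\sigma(g):=-\int_G f(h,g)\,d\mu(h),
\]
where $\mu$ is normalized Haar measure on $G$. A direct cocycle computation (using the 2-cocycle identity for $f$ together with the left- and right-invariance of $\mu$, as in Hu's treatment of continuous cohomology \cite{Hu:52}) shows $\delta\sigma=f$. Replacing $s$ by $\varphi(g):=\sigma(g)^{-1}s(g)\in\Gamma$ yields a continuous section that is also a homomorphism, and a continuous homomorphism between Lie groups is automatically smooth. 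The identity $\pi|_\Gamma\circ\varphi=\mathrm{id}_G$ together with $T\cap\varphi(G)=\{1\}$ then gives the internal semidirect decomposition $\Gamma=T\rtimes\varphi(G)$ by the usual argument.

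Finally, for the common fixed point, I would exploit compactness of $\varphi(G)\leq\E(V)$. Writing each $\varphi(g)$ as an affine map $x\mapsto A_g x+c_g$ with $A_g\in\Oname(V)$ and $c_g\in V$, the point
\[
p:=\int_G\varphi(g)(v)\,d\mu(g)
\]
is well defined for any choice of $v\in V$. Using $\varphi(h)\varphi(g)=\varphi(hg)$ and the right-invariance of $\mu$, one checks that $\varphi(h)(p)=\int_G\varphi(hg)(v)\,d\mu(g)=p$ for every $h\in G$.

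The main obstacle is the cohomology vanishing step: it requires the averaging argument to genuinely produce a continuous coboundary, which depends on the topological hypotheses on $T$ (finite dimensionality and continuous linear $G$-action) to guarantee that the Bochner integral defining $\sigma$ exists and commutes appropriately with the coboundary operator. Once this is in place, the fixed-point step is essentially formal.
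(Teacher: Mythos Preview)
Your approach is essentially the paper's cohomology-based proof: vanish $H^2_c(G,T)$ to split the extension, then average to find a fixed point. Two small corrections are worth flagging. First, your explicit coboundary formula integrates over the wrong slot; with the factor-set convention $f(g,h)=s(g)s(h)s(gh)^{-1}$, the cochain that works is $\tau(g):=\int_G f(g,h)\,d\mu(h)$, and the verification $\delta\tau=f$ uses \emph{left}-invariance of $\mu$ (your $\sigma(g)=-\int_G f(h,g)\,d\mu(h)$ yields only the $G$-invariant projection of $f$, not $f$ itself). Second, in the fixed-point step, $\varphi(h)(p)=\int_G\varphi(hg)(v)\,d\mu(g)=p$ uses \emph{left}-invariance; for compact $G$ the Haar measure is bi-invariant so this is harmless, but the word ``right'' is misplaced. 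Neither issue affects the overall strategy, which is sound and matches the paper.
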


We provide two proofs for this proposition. 
The first is group cohomology--based while the second avoids group cohomology, at least explicitly. 
Recall that for a topological group $G$, a continuous $G$-module $M$ is a topological abelian group equipped with an action of $G$ such that the action map $G\times M\rightarrow M$ is continuous. 
Following \cite[Section~3]{Hu:52}, the continuous group cohomology of $G$ with coefficients in $M$ is defined as the cohomology $H_c^\bullet(G,M)$ of the cochain complex $\{\operatorname{C}(G^n,M)\}_{n=0}^\infty$, where $\operatorname{C}(G^n,M)$ is the $\mathbb Z$-module comprised of continuous maps from $G^n$ to $M$ and the coboundary map $d \colon \operatorname{C}(G^n,M)\to \operatorname{C}(G^{n+1},M)$ is defined by the usual formula
\[
\begin{aligned}
(d\varphi)(g_1,\dots,g_{n+1}) 
:= g_1\cdot \varphi(g_2,\dots,g_{n+1})
+\sum_{i=1}^n (-1)^i \varphi(g_1,&\dots,g_ig_{i+1},\dots,g_{n+1})\\
&+ (-1)^{n+1}\varphi(g_1,\dots,g_{n}).
\end{aligned}
\]
Specializing to $n\in\{0,1\}$, one sees that the $1$-cocycles are the \textit{continuous crossed homomorphisms}, i.e., continuous maps $b\colon G\to M$ that satisfy $b(gh) = b(g) + g\cdot b(h)$ for all $g,h\in G$, and that the $1$-coboundaries are the \textit{principal crossed homomorphisms}, i.e., $1$-cocycles of the form $g\mapsto gm-m$ for some $m\in M$. 
As such, $H_c^1(G,M)$ vanishes if and only if every continuous crossed homomorphism is principal. 
Meanwhile, by \cite[(5.3)]{Hu:52}, the group $H_c^2(G,M)$ classifies isomorphism classes of extensions in the category of topological groups 
\[
0
\to M
\hookrightarrow \Gamma 
\xrightarrow{\pi} G
\to 1
\]
with conjugation by $\Gamma$ inducing the given action of $G$ on $M$, such that there exists a continuous (but not necessarily homomorphic) section $\varphi\colon G\rightarrow\Gamma$ that splits $\pi$ (i.e., such that $\pi\varphi=1_G$). 
The class of the split extension $\Gamma:= M\rtimes G$ corresponds to $0\in H_c^2(G,M)$, so if $H_c^2(G,M)=0$, then every extension with a continuous section has a homomorphic section.

With this recap, we give an almost immediate group cohomology--based proof of \cref{prop.extension-splits}.

\begin{proof}[Cohomology-based proof]
The hypothesis of \cref{prop.extension-splits} states that $T$ is a real vector space, and also immediately implies that $G$ is compact. 
Let $W$ be any finite-dimensional real $G$-representation. 
By \cite[Theorem~2.8]{Hu:52}, $H_c^i(G,W) = 0$ for each $i\geq 1$. 
In particular, $H_c^2(G,T) = 0$ and $H_c^1(G,V) = 0$.
    
The Lie group surjection $\pi|_\Gamma\colon\Gamma \rightarrow G$ defines a fiber bundle with fiber $T$. 
Since $T$ is a real vector space, thus solid (see \cite[Section~12.1]{Steenrod:99}), and $G$ is compact Hausdorff, the bundle has a continuous section by \cite[Theorem~12.2]{Steenrod:99}. 
Then by the above discussion, $H_c^2(G,T) = 0$ implies the existence of the desired continuous homomorphic section $\varphi\colon G\to \Gamma$. 
    
Next, one can check that the map $k\colon G\to V$ defined by $k(g) := \varphi(g)(0)$ is a continuous crossed homomorphism with respect to the action of $G$ on $V$. 
Since $H_c^1(G,V) = 0$, it follows that $k$ is principal, so it has the form $k(g)=g\cdot q - q$ for some $q\in V$. 
It follows that $\varphi(g)(x) = g\cdot x + k(g) = g\cdot (x+q)-q$ for each $g\in G$ and $x\in V$, so $p:=-q$ is a fixed point of $\varphi(G)$.
\end{proof}

The following is an alternative proof of \cref{prop.extension-splits} that avoids group cohomology, at least explicitly. 
(The map $b\colon G\rightarrow T^\perp$ constructed in the proof turns out to be a $1$-cocycle, so cohomology lurks beneath the surface.)

\begin{proof}[Prima facie cohomology-free proof]
For $c\in V$, let $t_c\in \E(V)$ denote translation by $c$. 
For each $h\in \Gamma$, we have a unique expression $h(x)= \pi(h)\cdot x+h(0)$ for $x\in V$, where $\pi(h)\in G\leq \O(V)$ and $h(0)\in V$. 
We claim that there exists a unique function $b\colon G \to T^\perp$ such that for each $A\in G$, it holds that
\[
\{h(0):h \in \pi^{-1}(A)\} 
= b(A)+T.
\]
To see this, note that the left-hand side is a nonempty union of $T$-cosets since $\pi(T\circ h) = \pi(h)$ and $T\circ h(0) = h(0)+T$ for each $h\in \Gamma$. 
It is a unique coset since if $\pi(h_1)=\pi(h_2)=A$, then $h_1\circ h_2^{-1} = t_{h_1(0)-h_2(0)}$, and so $h_1(0)\in h_2(0)+T$. 
This proves the claim.
    
Now consider the map $\varphi\colon G\to \Gamma$, given by $\varphi(A)(x) := Ax+b(A)$. 
Then $\pi\circ \varphi(A) = A$ for each $A\in G$, that is, $\varphi$ is a (set-map) section of $\pi$. 
Moreover, we have that
\[
\varphi(A)\circ \varphi(B)(x) 
= ABx + b(A)+A\cdot b(B) 
= ABx+b(AB) 
= \varphi(AB)(x),
\]
where the middle equality follows from the $A$-invariance of $T^\perp$ (noted at the beginning of the section) and the uniqueness of $b(AB)$ in $T^\perp$ as established in the aforementioned claim. 
So $b(A)+A\cdot b(B)=b(AB)$, i.e., $b$ is a crossed homomorphism, and $\varphi$ is a homomorphism. 
    
We are done once we show that $\varphi$ is smooth and that $\varphi(G)$ has a fixed point. 
Assuming smoothness, the latter follows by taking $v\in V$ any vector and defining
\[
p
:= \int_G \varphi(g)v d\mu_G(g),
\]
where the integral is with respect to the normalized (left-invariant) Haar measure $\mu_G$ on the compact Lie group $G$. 
This point is evidently fixed by every element of $\varphi(G)\leq \Gamma$ (in particular, $b(A) = p-Ap$ for each $A\in G$). 

We are left to prove smoothness. 
Since $H$ is a closed Lie subgroup of $\Gamma$, and $\pi|_\Gamma$ is a surjective Lie group homomorphism with kernel $H$, Noether's first isomorphism theorem for Lie groups entails that $\pi|_\Gamma$ is a submersion, so it possesses smooth local sections. 
Take an arbitrary $g\in G$, and let $\varphi_0\colon U\to \Gamma$ denote a smooth local section of $\pi|_\Gamma$ over an open neighborhood $U\subseteq G$ of $g$. 
Let $P\colon V\to T^\perp$ denote the linear orthogonal projection of $V$ onto $T^\perp$. 
Then the map $\Psi\colon \Gamma\to \Gamma$ given by $\Psi(h)(x):= \pi(h)\cdot x + P(h(0))$ is smooth and satisfies $\pi|_\Gamma\circ \Psi = \pi|_\Gamma$, as well as $\Psi(h)(0) = b(\pi(h))$ by the definition of $b$. 
This entails that $\varphi|_U = \Psi\circ \varphi_0$, and so $\varphi|_U$ is smooth. 
Overall, it follows that $\varphi$ is smooth, as desired.
\end{proof}

\section{Proof of Proposition~\ref{prop.so max filter}}
\label{app.calc proof}
Since $\SO(r)$ is compact, the function $M\mapsto \max_{Q\in \SO(r)}\operatorname{Tr}(QM)$ is continuous over $M \in \mathbb R^{r\times r}$. 
By continuity, we may pass to the open and dense subset where $M$ has distinct singular values. 
We proceed under this assumption for the rest of the proof.
    
Let $M = U_0\Sigma V_0$ be a singular value decomposition with $U_0,V_0\in \O(r)$. 
By the cyclic property of the trace, 
\[
\max_{Q\in \SO(r)}\operatorname{Tr}(QM)
= \max_{Q\in \SO(r)}\operatorname{Tr}(Q\Sigma_{\varepsilon}),
\]
where $\varepsilon = \mathbf{1}_{\{\det(M) < 0\}}$, $\Sigma_{0} = \Sigma$, and $\Sigma_{1} = J\Sigma = \Sigma J$, with $J$ denoting the identity matrix with its last diagonal entry negated.
    
If $\varepsilon = 0$, then the maximum is achieved at $Q = \Id_r$, since $|q_{ii}|\leq 1$ for all $Q\in \SO(r)$ and all $i\in [r]$. 
So we now assume $\varepsilon = 1$.

Let $U\in \arg\max_{Q\in \SO(r)}\operatorname{Tr}(Q\Sigma J)$. 
Then $U$ is a critical point of the map $Q\mapsto \operatorname{Tr}(Q\Sigma J)$, defined on $\SO(r)$. 
Taking directional derivatives along the tangent space
\[
T_{U}\SO(r) 
= \{A^TU:A^T = -A \in \mathbb R^{n\times n}\},
\]
we obtain $\operatorname{Tr}(A^TU\Sigma J) = 0$ for all skew-symmetric matrices $A \in \operatorname{Skew}^{r\times r}$. 
In other words, $U\Sigma J \in (\operatorname{Skew}^{r\times r})^\perp = \operatorname{Sym}^{r\times r}$, i.e., $U\Sigma J$ is symmetric.
    
This implies that both $U\Sigma J = J \Sigma U^T$ are valid singular value decompositions of the same matrix. 
Since $\Sigma$ has distinct diagonal entries, it follows that $U$ is diagonal.  
Considering we also have $U\in \SO(r)$, the diagonal entries of $U$ are necessarily $\pm 1$, with an even number of $-1$s. 
Thus,
\[
\Tr(U\Sigma J) 
= \max_{\substack{\varepsilon \in \{-1,1\}^r\\ \prod_{i=1}^r \varepsilon_i = -1}} \sum_{i=1}^r \varepsilon_i\sigma_i(M),
\]
which yields the desired formula.

\end{document}